\numberwithin{equation}{section}
\tikzstyle{vertex}=[circle, draw, inner sep=0pt, minimum size=6pt]
\newcommand{\vertex}{\node[vertex]}
\numberwithin{equation}{section}
\newtheorem*{theorem*}{Theorem}
\newtheorem*{corollary*}{\bf Corollary}
\newtheorem*{remark*}{\bf Remark}
\newtheorem{theorem}{Theorem}[section]
\newtheorem{corollary}[theorem]{Corollary}
\newtheorem{definition}[theorem]{Definition}
\newtheorem{lemma}[theorem]{Lemma}
\newtheorem{proposition}[theorem]{Proposition}
\newtheorem{remark}[theorem]{Remark}
\title[Torus quotients of Schubert varieties]
{On the Torus quotients of Schubert varieties}
 \author[N.~Chary Bonala]{Narasimha Chary Bonala}
\address{ Narasimha Chary Bonala\\
Max Planck Institute for Mathematics\\
Vivatsgasse 7, Bonn\\
Germany
}
\curraddr{
Fakult\"{a}t f\"{u}r Mathematik\\
Ruhr-Universit\"{a}t Bochum, D-44780 Bochum\\
Germany\\
Email: Narasimha.bonala@rub.de
}
\author[S.~K.~Pattanayak]{Santosha Kumar Pattanayak}
\address{
Santosha Kumar Pattanayak\\
Department of Mathematics and Statistics, IIT Kanpur\\
Kanpur, India\\
Email:santosha@iitk.ac.in\\
}
\begin{document}
\maketitle
\begin{linenomath*}

\begin{abstract}
 In this paper, we consider the GIT quotients of Schubert varieties for the action of a maximal torus. 
 We describe the minuscule Schubert varieties for which the 
 semistable locus is contained in the smooth locus. As a consequence, we study the smoothness of torus quotients of Schubert varieties in the Grassmannian.  We also prove that the torus quotient of any Schubert variety in the homogeneous space $SL(n, \mathbb C)/P$ is projectively normal with respect to the line bundle $\mathcal L_{\alpha_0}$ and the quotient space is a projective space, 
where the line bundle $\mathcal L_{\alpha_0}$ and the parabolic subgroup $P$ of $SL (n, \mathbb C)$ are associated to the highest root $\alpha_0$.
\end{abstract}

\end{linenomath*}
\noindent
Keywords:  GIT quotients;  quivers; standard monomials.  

\noindent Mathematical Subject Classification 2020: 14L24, 14M15, 20G05.
\section{Introduction}

The geometry (both symplectic and algebraic) of the quotients 
of projective homogeneous spaces 
by a maximal torus 
has been extensively studied in recent years; Allen Knutson calls 
them as {\it weight varieties} in his thesis \cite{Knu}. 
The construction of such quotients involves the choice of a 
linearized line bundle.
The dependence of the geometry of the quotient on the choice of a linearization was studied by Hu in \cite{Hu}, and in a more general setting by Dolgachev 
and Hu in \cite{DoHu}. The cohomology spaces of nonsingular weight varieties for $SL(n, \mathbb C)$ were computed by Goldin \cite{Goldin}. Special cases of 
weight varieties have been studied since the nineteenth century; for example a GIT quotient $(\mathbb {CP}^{ k-1})^n \sslash PGL(k, \mathbb C)$ is isomorphic to a GIT quotient
$Gr_k(\mathbb C^n)\sslash T$ by the Gelfand-MacPherson correspondence (here $Gr_k(\mathbb C^n)$ denotes the Grassmannian and $T$ denotes a maximal torus). The symplectic quotient of  Schubert varieties by a $1$-dimensional torus is studied in \cite{mare}. In this paper, we study the GIT quotients of Schubert varieties by a maximal torus.

To describe our results explicitly, we set up some notation. Let $G$ be a semisimple simply connected complex algebraic group. 
Let $T$ be a maximal torus of $G$ and $B$ be a Borel subgroup of $G$ containing $T$.
Let $P$ be a parabolic subgroup of $G$ containing $B$. In \cite{CSS, kannan1998torus, kannan1999torus, strickland2000quotients, kannan20091torus} 
the authors consider the problem of studying the torus quotients of flag varieties $G/P$. The papers \cite{pattanayak2014minimal, Kannansantosh, kannan2018torus} address the existence of semistable points in Schubert varieties and Richardson varieties for the action of a maximal torus.
In this paper, we study the smoothness of torus quotients of Schubert varieties in $G/P$ when $G$ is of type $A$ and $P$ is a maximal parabolic subgroup containing $B$.
When $G$ is of type other than $A$, we describe the minuscule Schubert varieties such that the semistable locus is contained in the smooth locus. To describe our results, we use a combinatorial description of quivers associated with minuscule Schubert varieties, which are introduced in \cite{perrin2007small}. They
generalize the notion of Young diagrams to other minuscule Grassmannians. Following \cite{perrin2007small}, we define the holes and the essential holes of a quiver and we use them to describe our results. For more details see Sec. \ref{quivers} or \cite{perrin2007small}.

 Let $\mathcal {L}_{\omega}$ be the homogeneous line bundle on the minuscule Grassmannian $G/P$ associated to a minuscule fundamental weight $\omega$. 
Let $W^P$ be the Weyl group associated to $P$. Given a $w\in W^{P}$, we denote by $X_{P}(w)_T^{ss}(\mathcal L_{\omega})$, the set of semistable points in the Schubert variety $X_{P}(w)$ with respect to $\mathcal L_{\omega}$ for the action of $T$. 
 
Recall that for minuscule Grassmannians, there is a unique minimal Schubert variety $X_P(v)$  admitting semistable points (i.e., $X_{P}(v)_T^{ss}(\mathcal L_{\omega})\neq \emptyset$) (see \cite[Lemma 1.7]{kannan20091torus}). In the sequel, we fix such a $v\in W^P$.  Let $Q_w$ be the quiver variety associated to $w\in W^P$. 
 
\begin{theorem*}[see Theorem \ref{quiver1}]
  The semistable locus  is contained in the smooth locus for $X_P(w)$ if and only if the quiver $Q_{v}$ contains all the essential holes of $Q_{w}$.
  \end{theorem*}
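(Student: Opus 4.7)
My plan is to combine Perrin's description of the singular locus of a minuscule Schubert variety with Kannan's criterion for the non-emptiness of the semistable locus of a Schubert subvariety, followed by a purely combinatorial translation. The two inputs I shall use are the following. First, by the main result of \cite{perrin2007small}, the singular locus of $X_P(w)$ is a union of Schubert subvarieties
\[
\mathrm{Sing}\,X_P(w)=\bigcup_{h}X_P(w_h),
\]
where $h$ ranges over the essential holes of $Q_w$, and the quiver $Q_{w_h}$ is produced from $Q_w$ by an explicit combinatorial recipe that removes the vertex $h$ together with everything depending on it; in particular $Q_{w_h}$ is the largest sub-quiver of the form $Q_u$ ($u\in W^P$) contained in $Q_w$ that does not contain $h$. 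Second, by \cite[Lemma 1.7]{kannan20091torus}, $X_P(u)^{ss}_T(\mathcal L_\omega)\ne\emptyset$ if and only if $v\le u$, equivalently $Q_v\subseteq Q_u$.

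Next, I would observe that the inclusion $X_P(w)^{ss}_T(\mathcal L_\omega)\subseteq X_P(w)^{\mathrm{sm}}$ holds if and only if no point of the singular locus of $X_P(w)$ is semistable. Using the fact that the restriction map from $T$-invariant sections of $\mathcal L_\omega^{\otimes n}$ on $X_P(w)$ to those on any Schubert subvariety is surjective for large $n$ (a standard consequence of standard monomial theory), semistability of a point of $X_P(w_h)$ in the ambient variety coincides with its semistability in $X_P(w_h)$ itself; thus the condition is equivalent to $X_P(w_h)^{ss}_T(\mathcal L_\omega)=\emptyset$ for every essential hole $h$ of $Q_w$. Applying the semistability criterion recalled above, this becomes $Q_v\not\subseteq Q_{w_h}$ for every essential hole $h$.

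The final step, which I expect to be the main technical obstacle, is the combinatorial equivalence
\[
Q_v\not\subseteq Q_{w_h}\ \Longleftrightarrow\ h\in Q_v.
\]
One direction is immediate: if $h\in Q_v$, then since $h\notin Q_{w_h}$ by construction, $Q_v\not\subseteq Q_{w_h}$. For the converse, one invokes the maximality property of $Q_{w_h}$: assuming $v\le w$ (the case $v\not\le w$ can be dealt with directly, the semistable locus being empty), if $h\notin Q_v$ then $Q_v\subseteq Q_w$ is a sub-quiver of $Q_w$ omitting $h$, and hence $Q_v\subseteq Q_{w_h}$ by maximality. Quantifying the equivalence over all essential holes of $Q_w$ then yields the theorem. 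The only non-routine point in this plan is verifying, from Perrin's construction of $Q_{w_h}$, that it really is the largest $Q_u$-shaped sub-quiver of $Q_w$ omitting $h$; this is essentially a bookkeeping argument on the peaks and sources associated with the hole, but it is where all the combinatorics of \cite{perrin2007small} has to be faithfully unpacked.
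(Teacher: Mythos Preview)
Your overall plan is correct and matches the paper's architecture: both reduce, via the observation that semistability of a point of a Schubert subvariety is the same computed in the subvariety or in $X_P(w)$, to the equivalence ``$Q_v\not\subseteq Q_{w_h}$ for every essential hole $h$'' $\Leftrightarrow$ ``every essential hole $h$ lies in $Q_v$''. The easy direction ($h\in Q_v\Rightarrow Q_v\not\subseteq Q_{w_h}$) is handled identically. The genuine difference is in the other direction. The paper does \emph{not} use the maximality property you isolate; instead it invokes Proposition~\ref{minimalsemisingular}, a case-by-case verification (types $A$, $D$, $E_6$, $E_7$) that the minimal Schubert variety $X_P(v)$ is itself singular, hence $Q_v$ has a non-virtual hole, and then argues that since $Q_w^h$ contains no hole, $Q_v$ must sit inside $Q_{w,h}$. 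Your route is cleaner: once one knows that the subquivers $Q_u\subseteq Q_w$ for $u\le w$ are exactly the order ideals of the quiver poset, the set $Q_{w,h}=\{j\in Q_w:j\not\ge h\}$ is visibly the largest such ideal avoiding $h$, so $h\notin Q_v$ and $Q_v\subseteq Q_w$ force $Q_v\subseteq Q_{w,h}$ immediately. This bypasses Proposition~\ref{minimalsemisingular} entirely and reduces what you call ``the main technical obstacle'' to a one-line poset argument; conversely, the paper's approach yields Proposition~\ref{minimalsemisingular} as an independent result of some interest (the minimal semistable Schubert variety is always singular outside the projective-space cases). One small point: your treatment of the case $v\not\le w$ is incomplete, since then the semistable locus is empty but it is not automatic that $Q_v$ contains all essential holes of $Q_w$; the theorem is really only meaningful under the standing hypothesis $v\le w$, which the paper assumes implicitly.
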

  
    As a consequence, we classify the Schubert varieties in the Grassmannian $Gr(r, n)$ for which the torus quotient is smooth when $r$ and $n$ are coprime.  We also give a direct proof using the Young diagrams (see Proposition \ref{semi-singular}). In terms of quivers Corollary \ref{smoothquotient} reads as follows:
\begin{corollary*}
 Let $G=SL(n, \mathbb C)$ and let $1< r < n-1$ be such that $gcd(n,r)=1$. Let $P_r$ be the maximal parabolic subgroup of $G$ associated to the simple root $\alpha_r$. Assume that $X_{P_r}(w)_T^{ss}(\mathcal L_{\omega_r})$ is nonempty, then the quotient $X_{P_r}(w)_T^{ss}(\mathcal L_{\omega_r})\sslash T$ 
 is smooth if the quiver $\mathcal Q_v$ contains all the essential holes of $Q_{w}$.
\end{corollary*}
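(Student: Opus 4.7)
The plan is to combine Theorem~\ref{quiver1} with the standard observation that, for the Grassmannian $Gr(r,n)$ with $\gcd(n,r)=1$, the action of $T$ on the semistable locus has only central stabilizers. The argument then becomes a short two-step descent.

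First I would apply Theorem~\ref{quiver1} directly: the hypothesis that $\mathcal Q_v$ contains all the essential holes of $Q_w$ is precisely the criterion for $X_{P_r}(w)^{ss}_T(\mathcal L_{\omega_r})$ to lie inside the smooth locus of $X_{P_r}(w)$. Consequently the semistable locus, as an open subset of the Schubert variety, is itself a smooth quasi-projective variety.

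Next I would invoke the coprimality $\gcd(n,r)=1$ to conclude that $T$ acts on this semistable locus with stabilizers contained in the (finite) center $Z(SL(n,\mathbb C))$. This is a standard weight/lattice count: the $T$-weights occurring in the sections of $\mathcal L_{\omega_r}$ differ by integer combinations of simple roots, and coprimality forces no proper subtorus of $T/Z$ to fix a semistable point. The precise statement is already used in the paper's references \cite{kannan20091torus} and \cite{kannan2018torus}, so I would cite it rather than reprove it. Combining the two steps, $T/Z$ acts freely on the smooth variety $X_{P_r}(w)^{ss}_T(\mathcal L_{\omega_r})$, hence the quotient map is a principal $T/Z$-bundle; smoothness then descends, and $X_{P_r}(w)^{ss}_T(\mathcal L_{\omega_r})\sslash T$ is smooth.

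The hard part of this plan is the middle step: cleanly establishing that the $T$-stabilizer of every semistable point is central under $\gcd(n,r)=1$. The first step is immediate from Theorem~\ref{quiver1}, and the third is standard GIT once freeness modulo the center is granted. An alternative route, indicated by the paper's Proposition~\ref{semi-singular}, is to bypass the quiver reformulation altogether and argue combinatorially with Young diagrams; but with Theorem~\ref{quiver1} in hand the quiver-based derivation sketched above is the most economical way to obtain the corollary.
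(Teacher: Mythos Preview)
Your proposal is correct and follows essentially the same route as the paper: first use the quiver criterion (Theorem~\ref{quiver1}) to place the semistable locus inside the smooth locus, then exploit $\gcd(n,r)=1$ to see that stabilizers are central so that the quotient map is a principal bundle for the adjoint torus, whence smoothness descends. The only adjustments are that the paper invokes \cite[Theorem~3.3]{kannan1998torus} (not \cite{kannan20091torus} or \cite{kannan2018torus}) for the key input $X_{P_r}(w)^{ss}_T=X_{P_r}(w)^{s}_T$ under coprimality, and makes the passage through the adjoint group explicit via \cite{kannan2014git}; you should do the same, since freeness modulo the center alone does not guarantee that the GIT quotient is a geometric quotient without first knowing that semistable equals stable.
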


Let $P$ be the parabolic subgroup of $SL(n, \mathbb C)$ corresponding to $\alpha_0=\omega_1+\omega_{n-1}$ (the highest root of $SL(n,\mathbb C)$) and let $\mathcal L_{\alpha_0}$ be the homogeneous line bundle on $G/P$ associated to $\alpha_0$. Fix a maximal torus $T\subseteq B\subseteq P$. Recall that since $\alpha_0$ is in the root lattice, by Kumar's result in \cite{Kumar}, the line bundle $\mathcal L_{\alpha_0}$ descends to the torus quotient.  

\begin{theorem*}[see Theorem \ref{projectivenormality} and Corollary \ref{quotient}]\
Assume that  $X_{P}(w)_T^{ss}(\mathcal L_{\alpha_0})$ is nonempty. Let $\mathcal M$ be the descent of $\mathcal L_{\alpha_0}$ to the quotient $X_P(w)^{ss}_{T}(\mathcal L_{\alpha_0})\sslash T$. Then
\begin{enumerate}
 \item
 The polarized variety 
$(X_P(w)^{ss}_{T}(\mathcal L_{\alpha_0})\sslash T, \mathcal M)$ is projectively normal.
\item 
 The quotient $X_P(w)^{ss}_T(\mathcal L_{\alpha_0})\sslash T$ is isomorphic to a projective space.
\end{enumerate}
 \end{theorem*}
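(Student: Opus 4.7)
The plan is to realize $X_P(w)$ inside a product of two projective spaces and to compute the $T$-invariants there directly.

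Since $\alpha_0=\omega_1+\omega_{n-1}$, the parabolic factors as $P=P_1\cap P_2$, where $P_i$ is the maximal parabolic corresponding to $\omega_1$ (respectively $\omega_{n-1}$). The resulting closed embedding
\[
G/P\;\hookrightarrow\;G/P_1\times G/P_2\;=\;\mathbb{P}^{n-1}\times(\mathbb{P}^{n-1})^*
\]
identifies $G/P$ with the incidence variety $\{(L,H):L\subset H\}$, cut out by the $T$-invariant bilinear form $\sigma=\sum_{i=1}^n x_iy_i\in H^0(\mathcal{O}(1,1))$. Under this embedding $\mathcal{L}_{\alpha_0}$ is the restriction of $\mathcal{O}(1,1)$, and by the standard vanishing theorems for line bundles on Schubert varieties, the restriction maps
\[
H^0\bigl(\mathbb{P}^{n-1}\times(\mathbb{P}^{n-1})^*,\mathcal{O}(k,k)\bigr)\twoheadrightarrow H^0\bigl(G/P,\mathcal{L}_{\alpha_0}^{\otimes k}\bigr)\twoheadrightarrow H^0\bigl(X_P(w),\mathcal{L}_{\alpha_0}^{\otimes k}\bigr)
\]
are $T$-equivariantly surjective, and this surjectivity descends to $T$-invariants.

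A weight computation (using $\epsilon_1+\cdots+\epsilon_n=0$ in the $SL(n,\mathbb{C})$ character lattice) shows that the ambient invariant ring is spanned in each degree by the diagonal monomials $\prod_i(x_iy_i)^{a_i}$, so it equals the polynomial ring $\mathbb{C}[x_1y_1,\ldots,x_ny_n]$. Let $A_w,B_w\subseteq\{1,\ldots,n\}$ be the index sets of the two projected Schubert varieties, so that $p_1(X_P(w))=\mathbb{P}(\mathrm{span}(e_j:j\in A_w))$ and analogously for $p_2$; set $S_w=A_w\cap B_w$. Since $X_P(w)\subseteq\{x_j=0:j\notin A_w\}\cap\{y_j=0:j\notin B_w\}$, each $x_jy_j$ with $j\notin S_w$ vanishes on $X_P(w)$, and the hypothesis that $X_P(w)^{ss}_T(\mathcal{L}_{\alpha_0})$ is nonempty forces $|S_w|\geq 2$. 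The key claim is the isomorphism
\[
\bigoplus_{k\geq0}H^0\bigl(X_P(w),\mathcal{L}_{\alpha_0}^{\otimes k}\bigr)^T\;\cong\;\mathbb{C}[x_iy_i:i\in S_w]\Big/\bigl(\textstyle\sum_{i\in S_w}x_iy_i\bigr),
\]
the right-hand side being a polynomial ring in $|S_w|-1$ algebraically independent generators. Granting this, (1) is immediate because the invariant ring is generated in degree one, and (2) follows because $\operatorname{Proj}$ of a polynomial ring modulo a single nonzero linear form is the projective space $\mathbb{P}^{|S_w|-2}$.

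The main obstacle is to verify this last isomorphism. The surjection is clear from the restriction diagram above, and the vanishings already described give one direction of the kernel. To show there are no further relations, it suffices to establish the Hilbert function identity $\dim H^0(X_P(w),\mathcal{L}_{\alpha_0}^{\otimes k})^T=\binom{k+|S_w|-2}{|S_w|-2}$. I would prove this via the standard monomial theory of Lakshmibai--Littelmann--Seshadri applied directly to $X_P(w)$: parametrize a basis of the degree-$k$ space by admissible standard monomials, characterize those of total $T$-weight zero combinatorially, and biject them with nonnegative integer tuples $(a_i)_{i\in S_w}$ of sum $k$ modulo the linear relation induced by $\sigma$. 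The subtle point is that $X_P(w)$ may be strictly smaller than $(X_{P_1}(w_1)\times X_{P_2}(w_2))\cap G/P$, so the standard-monomial analysis must be carried out on $X_P(w)$ itself rather than inherited from the product of Schubert varieties.
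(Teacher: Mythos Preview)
Your framing via the incidence embedding $G/P\hookrightarrow\mathbb{P}^{n-1}\times(\mathbb{P}^{n-1})^*$ is genuinely different from the paper's route. The paper reduces to $G/B$, fixes a minimal Coxeter element $v$ from Theorem~\ref{santosh}, and then extends $v$ toward $w_0$ one simple reflection at a time, at each step listing by hand the $T$-invariant standard Young tableaux on the resulting Schubert variety (Lemmas~\ref{6.4}--\ref{5.13}); projective normality and the identification with a projective space are read off from this explicit basis. Your approach is more uniform and, if completed, yields the same answer in one stroke: the quotient is $\mathbb{P}^{|S_w|-2}$.

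The gap is exactly where you place it: the isomorphism of the invariant ring with $\mathbb{C}[x_iy_i:i\in S_w]/(\sum_{i\in S_w}x_iy_i)$ is asserted, and your proposed verification (a Hilbert-function count via standard monomials on $X_P(w)$) would, if written out, essentially reproduce the paper's computation. But your caveat that ``$X_P(w)$ may be strictly smaller than $(X_{P_1}(w_1)\times X_{P_2}(w_2))\cap G/P$'' is unfounded precisely under your hypothesis, and recognizing this closes the gap without SMT. Writing $w\in W^P$ as the pair $(a,b)=(w(1),w(n))$ one has $A_w=\{1,\dots,a\}$, $B_w=\{b,\dots,n\}$, and nonemptiness of the semistable locus forces $a>b$. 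Then $X_{P_1}(w_1)\times X_{P_2}(w_2)\cong\mathbb{P}^{a-1}\times\mathbb{P}^{n-b}$ and the restricted form $\sigma'=\sum_{i=b}^{a}x_iy_i$ is irreducible of rank $a-b+1\ge 2$, so it cuts out an irreducible hypersurface of dimension $a+n-b-2=\ell(w)$; since $X_P(w)$ lies in it with the same dimension, they agree, and the ideal $(\sigma,x_{a+1},\dots,x_n,y_1,\dots,y_{b-1})$ is prime, hence already saturated. Combined with your surjectivity of restrictions this gives $H^0(X_P(w),\mathcal O(k,k))=(\mathbb{C}[x_1,\dots,x_a,y_b,\dots,y_n]/(\sigma'))_{(k,k)}$, and since $\sigma'$ is a $T$-invariant nonzerodivisor, passing to $T$-invariants in the exact sequence $0\to R\xrightarrow{\cdot\sigma'}R\to R/(\sigma')\to 0$ yields your claimed ring on the nose.
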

 
{\it When $P$ is as in the above theorem, i.e., $P=P_1\cap P_{n-1} \subset SL(n, \mathbb C)$, then the homogeneous space $SL(n, \mathbb C)/P$ can be viewed as the incidence variety between lines and hyperplanes in $\mathbb C^{n}$. Hence, one would expect to recover the projective spaces as quotients of $G/P$ for the torus action. The above theorem confirms that, it is indeed true for the Schubert varieties in $G/P$.}

\noindent We prove this theorem by applying the theory of standard monomials on Schubert varieties. 

The layout of the paper is as follows: Section 2 consists of some preliminaries and we establish a relation between the semistable locus and the singular locus of Schubert varieties. In Sec. 3, we deal with Schubert varieties in the Grassmannians 
and in Sec. 4, we recall the notion of quivers associated with minuscule Schubert varieties and we prove the above stated results.
In Sec. 5, we consider the torus quotients of Schubert varieties for $SL(n, \mathbb C)$ in the case of the highest root.
\section{Preliminaries}\label{Section 2}

In this section, we briefly recall the preliminaries from algebraic groups, Lie algebras and geometric invariant theory.
We refer to \cite{humphreys2012introduction, humphreys2012linear, GITmumford}
for a more detailed exposition.
Let $G$ be a semisimple simply connected complex algebraic group. 
Let $T$ be a maximal torus of $G$, $B$ a Borel subgroup of $G$ containing 
$T$ and let $U$ be the unipotent radical of $B$. 
Let $N_G(T)$ be the normalizer of $T$ in $G$ and $W=N_G(T)/T$ be the Weyl group of 
$G$ with respect to $T$. We  denote by $\Phi$, the set of roots with respect to 
$T$ and by $\Phi^{+}$, the set of positive roots with respect to $B$.
For the enumeration of roots we refer to \cite{humphreys2012introduction}. 
Let $S=\{\alpha_1, \ldots, \alpha_l\}\subseteq \Phi^{+}$ 
be the set of simple roots. For a given root $\alpha$, we denote $U_{\alpha}$ by the one-dimensional $T$-stable subgroup of $G$.
Set $$R(w):=\{\alpha\in \Phi^+:w(\alpha)<0\}~\text{and}~U_w:=\prod_{\alpha \in R(w)}U_{\alpha}$$ (see \cite[p.144]{Springer}).
Given a subset $I\subseteq S$, we denote 
$$W^I =\{w\in W| w(\alpha)>0, \, \alpha\in I\}$$ and $W_I$ be the subgroup of $W$
generated by the simple reflections $s_\alpha, \alpha \in I$. Then, every element 
$w \in W$ is uniquely expressed as the product $w=w^I.w_I$ with $w^I \in W^I$ and 
$w_I \in W_I$. 
We denote $w_0$ by 
the longest element of $W$ with respect to $S$.
Consider the set of characters $X(T)$ (respectively, one-parameter subgroups $Y(T)$) of $T$. Consider the canonical non-degenerate bilinear form $$\langle . ,  .\rangle : E_1\times E_2 
\longrightarrow \mathbb R,$$ where $$E_1:= X(T)\otimes \mathbb R~\text{and}~ E_2=Y(T)\otimes \mathbb R.$$ 
We have 
$s_{\alpha}(\chi)=\chi-\langle \chi, \check{\alpha}\rangle \alpha$ for all 
$\alpha \in \Phi$ and $\chi \in E_1$, where $\check \alpha$ is the coroot of $\alpha$. Set $s_i=s_{\alpha_i} \,\ \forall 
\,\ i=1,2, \ldots, l$ and denote by $\{\omega_i: i=1,2,\ldots, l\} \subset E_1$, the set of  
fundamental weights; i.e., $\langle \omega_i, \check{\alpha_j} \rangle = 
\delta_{ij}$ for all $i,j = 1,2, \ldots, l$. 

 Let $P$ be the parabolic subgroup corresponding to a subset $I$ of $S$.  We also denote $W/W_{I}=W^I$ by
$W^P$ when $P$ corresponds to $I\subset S$.  
For the left action of $T$ on $G/P$, there are only finitely many fixed points in $G/P$ and are given by  
$$\{e_w := wW_I: w \in W/W_I\}.$$ For a given $w \in W/W_I$, the $B$-orbit $C_P(w) := 
Be_w= BwP/P$ in $G/P$ is a locally closed subset of $G/P$, called the {\it 
Schubert cell}. The Zariski closure of $C_P(w)$ with the canonical reduced 
structure is the {\it Schubert variety} associated to $w$, and is denoted by 
$X_P(w)$. Thus the Schubert varieties in $G/P$ are indexed by $W^I$.
Note 
that if $P = B$, then $W_I = \{id\}$, and the Schubert varieties in $G/B$ are 
indexed by the elements of $W$. We denote by $X(w)$ the Schubert variety corresponding 
to $w \in W$. There is a cell decomposition of Schubert varieties 
$X_P(w)$ in $G/P$ that is $$X_P(w)=\amalg_{v\leq w}BvP/P$$ (Bruhat-decomposition). Here, $\leq$ denotes the Bruhat order on $ W^P$. 

We now recall the definition of the Hilbert-Mumford numerical function and the
definition of semistable points from \cite{GITmumford}. We also refer to \cite{newstead} 
for notation in geometric invariant theory. 

Let $X$ be a projective variety with an action of 
$G$. A 
point $x \in X$ is said to be semistable with respect to a $G$-linearized very ample
line bundle $\mathcal L$ if there is a positive integer $m \in \mathbb N$, and 
a $G$-invariant section $s \in H^0(X, \mathcal L^m)$ with $s(x) \neq 0$.
Let $\lambda$ be a one-parameter subgroup of $G$, i.e, $\lambda:\mathbb G_m\to G$ is a algebraic group homomorphism.
 For a given point $x \in \mathbb 
P(H^0(X,\mathcal L)^*)$, we write $\hat{x}\in H^0(X,\mathcal L)^*$ in the form $$\hat{x}= \sum_{i=1}^rv_i,$$ where $v_i$ is a
weight vector of $\lambda$ with weight $m_i$. 
Then the Hilbert-Mumford numerical function is defined by 
\begin{linenomath*}\[\mu^{\mathcal L}(x, 
\lambda):=-min\{m_i: i =1, \ldots, r\}.\]\end{linenomath*} The  Hilbert-Mumford criterion says 
that the point $x$ is semistable (respectively, stable) if and only if $\mu^{\mathcal L}(x, \lambda) \geq 0$ (respectively, $\mu^{\mathcal L}(x, \lambda) > 0$ ) for all 
one-parameter subgroups $\lambda$.

For any character $\chi$ of $P$, we denote by $\mathcal L_{\chi}$, the homogeneous line 
bundle on $G/P$ associated with $\chi$. We denote by $X_P(w)_T^{ss}
(\mathcal L_{\chi})$ the semistable points of $X_P(w)$ for the action of $T$ 
with respect to the line bundle $\mathcal L_{\chi}$. 
For more details on semistable points in the case of flag varieties we refer to \cite{seshadri1972quotient}.

From now on assume that $\chi$ is a dominant character of $P$ and we set $\mathcal L:=\mathcal L_{\chi}$. We define, 
\begin{align*}
\mathcal E_{w}^{ss}:=\{ v\in W^P: v\leq w ~\mbox{and}~ v~\mbox{is minimal with }~X_P(v)_T^{ss}(\mathcal L)\neq \emptyset\}~\text{and}~
\end{align*}  
\begin{align*}
\mathcal E_w^{sing}:=\{v\in W^P: v\leq w ~\mbox{and}~ v~\mbox{is maximal such that}~ \\ e_v~\mbox{is a singular point in}~X_P(w) \}.
\end{align*}

\noindent We first recall a useful observation (see \cite[Chapter 4.4]{billey2000singular} for more details).
\begin{remark}\label{1}\
\begin{enumerate}
    \item The point $e_w\in X_P(w)$ is always a smooth point in $X_P(w)$; and 
    \item If the point $e_{id}$ is smooth in $X_P(w)$, then $X_P(w)$ is smooth.
\end{enumerate}
\end{remark}

\noindent The following lemma characterizes the Schubert varieties for which the semistable locus is contained in the smooth locus.   

\begin{lemma} \label{3.4} The set 
 $X_P(w)^{ss}_T(\mathcal L)$ is contained in the smooth locus of $X_P(w)$ if and only if $w_1\ngeqslant w_2 ~~\mbox{for all} ~w_1\in \mathcal E_w^{sing} ~\mbox{and} ~ w_2\in \mathcal E_{w}^{ss}$. 
\end{lemma}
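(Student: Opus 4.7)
The plan is to rewrite the condition $X_P(w)_T^{ss}(\mathcal L) \subseteq \mathrm{Smooth}(X_P(w))$ as $X_P(w)_T^{ss}(\mathcal L) \cap \mathrm{Sing}(X_P(w)) = \emptyset$ and analyze both sides through the Bruhat stratification of $X_P(w)$, eventually reducing everything to a combinatorial condition on $W^P$.

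First, since $\mathrm{Sing}(X_P(w))$ is closed and $B$-stable (the $B$-action on $X_P(w)$ is by automorphisms), it is a finite union of Schubert subvarieties; by the very definition of $\mathcal E_w^{sing}$ its maximal components are the $X_P(w_1)$ with $w_1 \in \mathcal E_w^{sing}$, so the condition becomes $X_P(w)_T^{ss}(\mathcal L) \cap X_P(w_1) = \emptyset$ for every $w_1 \in \mathcal E_w^{sing}$. The next step is a compatibility claim: for every $v \leq w$,
$$X_P(w)_T^{ss}(\mathcal L) \cap X_P(v) \;=\; X_P(v)_T^{ss}(\mathcal L).$$
The inclusion ``$\subseteq$'' is immediate by restricting invariant sections. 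For ``$\supseteq$'', any $T$-invariant section of $\mathcal L^m$ on $X_P(v)$ lifts to a $T$-invariant section on $X_P(w)$: the classical surjectivity of $H^0(X_P(w), \mathcal L^m) \to H^0(X_P(v), \mathcal L^m)$ for Schubert varieties is $T$-equivariant and, combined with the exactness of the $T$-invariants functor on rational $T$-modules, induces a surjection on $T$-invariants.

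Granting the previous step, the original condition becomes $X_P(w_1)_T^{ss}(\mathcal L) = \emptyset$ for every $w_1 \in \mathcal E_w^{sing}$. The remaining combinatorial translation is: $X_P(w_1)_T^{ss}(\mathcal L) \neq \emptyset$ if and only if there exists $w_2 \in \mathcal E_w^{ss}$ with $w_2 \leq w_1$. The ``if'' direction uses $X_P(w_2) \subseteq X_P(w_1)$ together with the compatibility above. For the ``only if'' direction, one picks any minimal $v_0 \leq w_1$ with $X_P(v_0)_T^{ss}(\mathcal L) \neq \emptyset$; transitivity of the Bruhat order shows that such a $v_0$ is also minimal in $\{v' \le w : X_P(v')_T^{ss}(\mathcal L) \neq \emptyset\}$, hence $v_0 \in \mathcal E_w^{ss}$. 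Assembling the three steps gives the stated equivalence.

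The main obstacle is the compatibility step, which identifies intrinsic semistability on the Schubert subvariety $X_P(v)$ with semistability of the same point inside the larger $X_P(w)$; in particular the ``$\supseteq$'' direction depends on the nontrivial fact that $T$-invariant sections extend across Schubert varieties. The rest of the argument is a routine manipulation of minimal/maximal elements in the Bruhat order.
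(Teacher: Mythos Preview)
Your proof is correct and follows essentially the same strategy as the paper: both arguments hinge on decomposing $\mathrm{Sing}(X_P(w))$ as $\bigcup_{w_1\in\mathcal E_w^{sing}} X_P(w_1)$ and on the fact that a Schubert subvariety $X_P(v)\subseteq X_P(w)$ has semistable points if and only if $v$ dominates some element of $\mathcal E_w^{ss}$. The main difference is one of presentation. You isolate and prove the compatibility $X_P(w)_T^{ss}(\mathcal L)\cap X_P(v)=X_P(v)_T^{ss}(\mathcal L)$ explicitly, invoking the surjectivity of the $T$-equivariant restriction $H^0(X_P(w),\mathcal L^m)\to H^0(X_P(v),\mathcal L^m)$ together with exactness of $(-)^T$; the paper uses both inclusions of this identity implicitly (the nontrivial one in the forward direction, the trivial one in the backward direction via the Bruhat-cell argument) without stating it as a lemma. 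Your organization is cleaner and makes the one genuinely nontrivial input---extension of $T$-invariant sections across Schubert inclusions---visible, whereas the paper's backward direction works pointwise through Bruhat cells and $B$-stability of the singular locus to reach the same conclusion.
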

\begin{proof}
 Assume that $X_P(w)^{ss}_T(\mathcal L)$ is contained in the smooth locus of $X_P(w)$. If $X_P(w)$ is smooth or $X_P(w)^{ss}_T(\mathcal L)=\emptyset$, then there is nothing to prove. 
 So assume that $X_P(w)$ is singular and $X_P(w)^{ss}_T(\mathcal L)\neq \emptyset$.
Then, there exists $v\leq w$ minimal such that $X_P(v)_T^{ss}(\mathcal L)\neq \emptyset$. Let $w'\in \mathcal E_w^{sing}$.
If $v\leq w' \leq w$, then we see that \begin{linenomath*} $$X_P(v)\subset X_P(w')\subset Sing(X_P(w))~\text{and so} ~X_P(v)^{ss}_T(\mathcal L)\subset Sing(X_P(w)).$$ \end{linenomath*} This is a contradiction to the assumption. 

Otherwise, we have either $w'<v$ or $w'$ and $v$ are not comparable. By the minimality of $v$, it follows that $$X_P(w')\cap X_P(v)^{ss}_T(\mathcal L)=\emptyset.$$ Thus, we conclude that $w_1\ngeqslant w_2 ~~\mbox{for all} ~w_1\in \mathcal E_w^{sing} ~\mbox{and} ~ w_2\in \mathcal E_{w}^{ss}$.

Now assume that $w_1\ngeqslant w_2 ~~\mbox{for all} ~w_1\in \mathcal E_w^{sing} ~\mbox{and} ~ w_2\in \mathcal E_{w}^{ss}$. We
suppose there exists $x\in X_P(w)_T^{ss}(\mathcal L)$ such that $x$ is singular in $X_P(w)$. 
Note that  $$X_P(w)_T^{ss}(\mathcal L)\subseteq \coprod _v BvP/P$$ where $w\geq v\geq v'$ for some $v'\in \mathcal E_w^{ss}$.
It is clear that $x\in BvP/P$ for some such $v$. Since $b\cdot x$ is also singular for all $b\in B$, it follows that $$BvP/P \subset Sing(X_P(w))~\text{and hence,}~ X_P(v)\subset Sing(X_P(w)).$$
Thus, there exists $v'\leq v$ such that $e_{v'}$ is singular in $X_P(w)$ and $v'\in \mathcal E_w^{ss}$.
This implies that $v'\in \mathcal E_w^{ss}\cap \mathcal E_w^{sing}$, a contradiction.
Hence, $X_P(w)^{ss}_T(\mathcal L)$ is contained in the smooth locus of $X_P(w)$.
\end{proof}

\section{Schubert varieties in the Grassmannian}
Let $G=SL(n,\mathbb C)$ and let $P_r$ be the maximal parabolic subgroup corresponding to the simple root $\alpha_r$. Note that $Gr(1,n)\cong Gr(n-1,n) \cong \mathbb P^{n-1}$. So in these two cases the Schubert varieties and their torus quotients are all smooth. So from now on we assume that $r\in \{2,3,\ldots, n-2\}$. We denote $\mathcal L_r$ by the line bundle 
on $G/P_r=Gr(r,n)$ associated to the fundamental weight $\omega_r$. In this section, we give a criterion for Schubert varieties in the Grassmannian $Gr(r,n)$
for which the semistable locus for the action of a maximal torus is contained in the smooth locus. When $r$ and $n$ are coprime to each other we also show 
that the torus quotient is smooth. In the next section, we prove a general result for minuscule Schubert varieties using quivers associated with them. 

We start by recalling the correspondence between the elements of $W^{P_r}$ and the Young diagrams. We follow the convection in \cite[Chapter 3]{manivel2001symmetric}.  

 The permutations giving rise to distinct Schubert varieties in $Gr(r,n)$ are of the form $w=(w_1,\ldots,w_n)$ 
where  $w_{1}<\cdots<w_k$, $w_{r+1}<\cdots<w_n$. Therefore, it is enough to record the permutations of $S_n$ as $w=(w_1,\ldots,w_r)$, which will be called a {\em Grassmannian permutation}. Thus, we can identify $W^{P_r}$ with the set $$I(r, n):=\{(i_1, i_2, \ldots, i_r): 1\leq i_1<i_2< \cdots <i_r\leq n\}.$$ 

For a given $(i_1, i_2, \ldots, i_r) \in I(r, n)$, we associate a partition $\lambda$ contained in an $r\times n-r$ rectangle, that is a decreasing sequence of integers $$n-r\geq \lambda_1\geq \ldots \geq \lambda_{r}\geq 0.$$ The association is given by $\lambda_j=n-r+j-i_j$ for all $j$.

The Young diagram of the partition $\lambda$ is obtained by lining up cells/boxes, from top to bottom, rows of length given by parts of $\lambda$ and sharing the same left most column. Note that there are $\lambda_{i}$ cells/boxes in $i`$th row.

We say a cell/box at the position $(i, j)$ is a corner of the  Young diagram $\lambda$ if $\lambda_i\neq n-r$ and we obtain a valid Young diagram after removing  it.

Given two partitions $\lambda$ and $\mu$ we say $\mu \subset \lambda$ if the Young diagram for $\mu$ fits inside the Young diagram for $\lambda$. We can also parametrise Schubert varieties in the Grassmannian $Gr(r, n)$ by the partitions in the $r\times n-r$ rectangle. We denote by $X_{\lambda}$ the Schubert variety corresponds to $\lambda$.

\noindent We recall the following result (see \cite[Proposition 3.2.3 ]{manivel2001symmetric}):
\begin{lemma}\label{contain} For given partitions  $\lambda$ and $\mu$ inside the $r\times n-r$ rectangle, we have $$X_{\lambda}\subset X_{\mu} ~~\text{if and only if}~~ \mu\subset \lambda.$$
\end{lemma}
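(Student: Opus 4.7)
The plan is to translate the containment of Schubert varieties, which is controlled by the Bruhat order on $W^{P_r}$, into a combinatorial condition on the tuples $I(r,n)$ and then into a condition on the associated Young diagrams.

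First I would recall, from the Bruhat decomposition $X_P(w) = \amalg_{v \leq w} BvP/P$ mentioned in Section~\ref{Section 2}, the standard fact that for $v,w \in W^{P_r}$ one has $X_P(v) \subset X_P(w)$ if and only if $v \leq w$ in the Bruhat order. Thus the task reduces to identifying the Bruhat order on $W^{P_r} \simeq I(r,n)$ in coordinates. The key intermediate claim is the following description of the induced order: if $v,w \in W^{P_r}$ correspond respectively to the tuples $(i_1,\ldots,i_r)$ and $(j_1,\ldots,j_r)$ in $I(r,n)$, then $v \leq w$ if and only if $i_k \leq j_k$ for every $k = 1,\ldots, r$.

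To prove this intermediate claim, I would proceed in two directions. For the forward direction, one can argue by induction on the length $\ell(w) - \ell(v)$: a single cover $v \lessdot w$ in $W^{P_r}$ is obtained by multiplying a reduced Grassmannian permutation by a transposition in a way that strictly increases exactly one coordinate $i_k$ (while preserving the strict-increase pattern on the two blocks that characterises a Grassmannian permutation), so coordinate-wise inequality is clearly preserved along covers. For the converse, assuming $i_k \leq j_k$ for all $k$ with strict inequality somewhere, one picks the largest index $k$ at which $i_k < j_k$ and produces a Grassmannian permutation between them by decreasing $j_k$ by one (or by the appropriate valid step), showing by induction that $v \leq w$.

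Finally I would translate this coordinate-wise inequality into the partition language via the formula $\lambda_j = n-r+j-i_j$. Writing $\lambda$ for the partition of $v$ and $\mu$ for the partition of $w$, the inequality $i_k \leq j_k$ for all $k$ becomes
\begin{equation*}
\lambda_k = n-r+k-i_k \;\geq\; n-r+k-j_k = \mu_k \quad\text{for all } k,
\end{equation*}
which is precisely the condition that the Young diagram of $\mu$ fits inside the Young diagram of $\lambda$, i.e.\ $\mu \subset \lambda$. Combining with the Bruhat equivalence in the first step gives $X_\lambda \subset X_\mu \iff v \leq w \iff \mu \subset \lambda$.

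The main obstacle is the intermediate combinatorial claim identifying the Bruhat order on $W^{P_r}$ with the coordinate-wise order on $I(r,n)$: one must verify carefully that the Bruhat covers, when restricted to the coset representatives $W^{P_r}$, correspond exactly to incrementing a single coordinate $i_k$ while keeping the tuple strictly increasing. Everything else (the Bruhat-decomposition containment and the arithmetic translation $\lambda_j = n-r+j-i_j$) is formal, so the work is concentrated there.
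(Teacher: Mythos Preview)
The paper does not actually prove this lemma: it is stated as a recalled fact with a citation to \cite[Proposition~3.2.3]{manivel2001symmetric}. Your outline is the standard argument behind that citation and is essentially correct.

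One small gap: in the converse direction of your intermediate claim, the recipe ``pick the largest index $k$ with $i_k<j_k$ and decrease $j_k$ by one'' can fail to produce a strictly increasing tuple. For instance with $r=3$, $n=5$, $i=(1,2,5)$ and $j=(2,3,5)$, the largest such index is $k=2$, but $j_2-1=2=j_1$. The fix is easy: either choose instead an index $k$ with $i_k<j_k$ and $j_{k-1}<j_k-1$ (one always exists, since in Young-diagram language $\mu\subsetneq\lambda$ forces $\lambda\setminus\mu$ to contain an addable box of $\mu$), or bypass the issue entirely by noting that $\ell(w)=\sum_k(j_k-k)=|\mu|$ and that $\mu\subset\lambda$ lets you interpolate a saturated chain of partitions, each step adding a single box and hence giving a Bruhat cover. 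With that adjustment your argument goes through.
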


The following theorem describes the singular locus of Schubert varieties in Grassmannians (see \cite[Theorem 3.4.4]{manivel2001symmetric}).  

\begin{theorem}\label{singularlocus} Let $X_{\mu}$ be a Schubert variety in $Gr(r, n)$. Let $T(\mu)$ be the set of all partitions obtained from $\mu$ as follows: if $(i,j)$ is a corner of $\mu$, we add
 to the diagram of $\mu$ the cell $(i+1,j+1)$, and complete a partition from it in a minimal way. Then,  
 \begin{equation}\label{singularlocus1}
 Sing(X_{\mu})=\cup _{\mu'\in T(\mu)}X_{\mu'}
 \end{equation} is the decomposition of singular locus of the Schubert variety $X_{\mu}$ into irreducible components.
\end{theorem}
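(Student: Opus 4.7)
The plan is to use the $T$-equivariance of $X_\mu$ to reduce the singular locus to a union of Schubert subvarieties, and then to compute Zariski tangent spaces at $T$-fixed points in order to identify the irreducible components with the partitions in $T(\mu)$.

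First I would observe that $Sing(X_\mu)$ is closed and $T$-stable in $X_\mu$, hence by the Bruhat decomposition it is a union of Schubert subvarieties $X_\lambda$ with $\mu\subseteq\lambda$ (by Lemma \ref{contain}). So it suffices to decide, for each $T$-fixed point $e_\lambda$ with $\mu\subseteq\lambda$, whether $e_\lambda$ is a singular point of $X_\mu$, and then to pick out the minimal such partitions $\lambda$; these will index the maximal (hence irreducible) components of $Sing(X_\mu)$.

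Next I would compute the Zariski tangent space $T_{e_\lambda}X_\mu$. On an affine neighborhood of $e_\lambda$ coming from the opposite Schubert cell, coordinates are labelled by the boxes of the $r\times(n-r)$ rectangle that lie outside the Young diagram of $\lambda$, and $X_\mu$ is cut out locally by restrictions of Pl\"ucker-type relations. A direct analysis identifies $T_{e_\lambda}X_\mu$ with the span of those weight vectors indexed by a combinatorially defined subset of boxes, and comparison with $\dim X_\mu=|\mu|$ yields the smoothness criterion:
\begin{linenomath*}\[e_\lambda \text{ is smooth in } X_\mu \iff \text{for every corner } (i,j) \text{ of } \mu,\ \text{the box } (i+1,j+1)\notin\lambda.\]\end{linenomath*}

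Finally, the minimal partitions $\lambda\supseteq\mu$ that violate this criterion are obtained by fixing a corner $(i,j)$ of $\mu$, adjoining the diagonal box $(i+1,j+1)$, and completing to a partition in the smallest possible way; this is exactly the construction of $T(\mu)$. The resulting partitions are pairwise incomparable, and by Lemma \ref{contain} the corresponding Schubert varieties are pairwise non-nested, so $\bigcup_{\mu'\in T(\mu)}X_{\mu'}$ is an irredundant irreducible decomposition of $Sing(X_\mu)$. The main obstacle is the explicit tangent-space calculation together with the verification of the combinatorial smoothness criterion; this is the heart of the matter and relies on classical Jacobian/Schubert-smoothness results (e.g.\ Lakshmibai--Seshadri and Svanes), whereas the reduction and the matching with $T(\mu)$ are then essentially bookkeeping.
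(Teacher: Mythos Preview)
The paper does not give its own proof of this theorem: it is quoted verbatim as \cite[Theorem 3.4.4]{manivel2001symmetric} and used as input for the subsequent arguments. So there is no ``paper's proof'' to compare against; the authors simply cite the result.

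Your outline is the standard route to this classical fact (reduce to $T$-fixed points via $B$-stability of the singular locus, then compute the Zariski tangent space at $e_\lambda$ and compare its dimension to $\dim X_\mu$), and it is essentially what one finds in Manivel's book and in the underlying Lakshmibai--Seshadri/Svanes arguments. The only point I would flag is that the ``main obstacle'' you name --- the explicit tangent-space computation and the resulting combinatorial smoothness criterion --- is genuinely the entire content of the theorem; your write-up treats it as a black box (``a direct analysis identifies\ldots'') while the surrounding reductions are routine. If you were asked to actually supply a proof rather than a plan, that step would need to be carried out in full, e.g.\ by describing the Jacobian of the Pl\"ucker relations restricted to the opposite big cell, or by invoking the known formula $\dim T_{e_\lambda}X_\mu = \#\{\text{reflections } t : t\cdot\lambda \le \mu\}$ and translating it into the box condition you state.
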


\begin{remark}\label{remarksmooth} The smooth Schubert varieties $X_{\lambda}$ are those which correspond to partitions $\lambda$ which have diagrams complementary in the $r\times n-r$ rectangle, to smaller rectangles. Further, smooth Schubert varieties are sub-Grassmannians (see \cite[Remark 3.4.5, p.117]{manivel2001symmetric}).   

\end{remark}

Recall the notation of $\mathcal E^{ss}_w$ from Sec. \ref{Section 2}. 
Inside the Grassmannian $Gr(r, n)$ there is a unique minimal Schubert variety  $X_{P_r}(v)$  such that $X_{P_r}(v)^{ss}_T(\mathcal L_r)\neq \emptyset$ thanks to \cite[Lemma 1.7 and 2.7]{kannan20091torus}. Thus, it follows that the set $\mathcal E^{ss}_w$ is either empty or $\{v\}$ for any $w\in W^{P_r}$. 
 
\begin{proposition}\label{semi-singular}Let $w=(w_1, \ldots, w_r)\in W^{P_r}$ be such that $X_{P_r}(w)^{ss}_T(\mathcal L_r)$ is nonempty. Then, the semistable locus $X_{P_r}(w)^{ss}_T(\mathcal L_r)$ is contained in the smooth locus of $X_{P_r}(w)$ if 
and only if $w_i<v_{i+1}$ whenever $w_{i+1}>w_{i}+1$. 
\end{proposition}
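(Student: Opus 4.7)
The plan is to translate Lemma \ref{3.4} into a condition on Young diagrams by combining Lemma \ref{contain} and Theorem \ref{singularlocus}. By the uniqueness statement quoted before the proposition, $\mathcal E_w^{ss} = \{v\}$, so Lemma \ref{3.4} reduces the claim to: no element of $\mathcal E_w^{sing}$ dominates $v$ in the Bruhat order. Using Theorem \ref{singularlocus} together with the fact that the irreducible components $X_{\mu'}$ of $\mathrm{Sing}(X_{P_r}(w))$ are pairwise incomparable, the set $\mathcal E_w^{sing}$ consists precisely of the Grassmannian permutations $w(\mu')$ with $\mu' \in T(\lambda^w)$, one for each component. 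By Lemma \ref{contain}, $w(\mu') \geq v$ iff $\mu' \subseteq \lambda^v$, so the claim reduces to: for every $\mu' \in T(\lambda^w)$, $\mu' \not\subseteq \lambda^v$.

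Next, I would unpack the corner-completion procedure of Theorem \ref{singularlocus} in terms of $w$. Using $\lambda^w_k = n-r+k-w_k$, a corner of $\lambda^w$ amounts to an index $i \in \{1, \ldots, r-1\}$ with $w_{i+1} > w_i + 1$ (so $\lambda^w_i > \lambda^w_{i+1}$, i.e. row $i$ has a removable cell) and $w_i > i$ (so $\lambda^w_i < n-r$, needed for the added cell $(i+1,\lambda^w_i+1)$ to lie inside the $r\times(n-r)$ rectangle). A direct Young-diagram bookkeeping then shows that the minimal completion $\mu'$ agrees with $\lambda^w$ outside the range $[i_0, i+1]$, where $i_0$ is the smallest index with $\lambda^w_{i_0} = \lambda^w_i$, and takes the constant value $\lambda^w_i + 1$ on that range.

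Since $v \leq w$ forces $\lambda^w \subseteq \lambda^v$, the inequality $\mu'_k \leq \lambda^v_k$ is automatic for $k \notin [i_0, i+1]$, so checking $\mu' \subseteq \lambda^v$ amounts to verifying $\lambda^w_i + 1 \leq \lambda^v_k$ on the range $[i_0, i+1]$. Since $\lambda^v$ is weakly decreasing, the tightest constraint is at $k = i+1$, and substituting $\lambda^v_{i+1} = n-r+i+1-v_{i+1}$ rewrites it as $v_{i+1} \leq w_i$. Thus $\mu' \not\subseteq \lambda^v$ iff $w_i < v_{i+1}$, which yields the condition ``$w_i < v_{i+1}$ whenever $w_{i+1} > w_i + 1$ and $w_i > i$''. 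In the remaining case $w_{i+1} > w_i + 1$ with $w_i = i$, the inclusion $v \leq w$ forces $v_i = i$ (since $i \leq v_i \leq w_i = i$), and hence $v_{i+1} \geq i+1 > w_i$ automatically; so the auxiliary restriction $w_i > i$ can be dropped, matching the statement.

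I expect the main technical hurdle to be the explicit Young-diagram computation of $\mu'$: one must correctly identify which rows need to be lengthened (all rows of $\lambda^w$ equal in length to row $i$, together with row $i+1$) and then verify that the binding inequality for the containment $\mu' \subseteq \lambda^v$ occurs at the bottom row $k = i+1$ of that block. The rest is a translation dictionary between Grassmannian permutations, partitions in the rectangle, and the Bruhat order, which is standard but requires care with the reversal of inclusions.
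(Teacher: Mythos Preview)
Your proposal is correct and follows essentially the same route as the paper: reduce via Lemma~\ref{3.4} and the uniqueness of $v$ to the containment question $\mu'\subseteq\lambda^v$ for $\mu'\in T(\lambda^w)$, then translate the corner--completion inequality $\lambda^v_{i+1}\geq\lambda^w_i+1$ into $v_{i+1}\leq w_i$. Your version is in fact more careful than the paper's, since you explicitly compute the shape of $\mu'$, isolate the binding row $i+1$, and dispose of the boundary case $w_i=i$ (equivalently $\mu_i=n-r$), which the paper's proof passes over.
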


\begin{proof} 
   Let $\mu=(n-r\geq \mu_1\geq \ldots \geq \mu_r \geq 0 )$ (respectively, $(n-r\geq\lambda_1\geq \cdots \geq \lambda_r\geq 0)$) be the partition corresponding to $w\in W^{P_r}$ (respectively, $v$). 
 Since $X_{\lambda}$ is contained in $X_{\mu}$, the Young diagram corresponding to $\mu$ is contained in the Young diagram corresponding to $\lambda$ (see Lemma \ref{contain}). 
Recall the definition of $T(\mu)$ from Theorem \ref{singularlocus}.
 By Lemma \ref{3.4} and Theorem \ref{singularlocus},  the semistable locus $(X_{\mu})^{ss}_T(\mathcal L_r)$ is contained in the smooth locus of $X_{\mu}$ if and only if  there is no $\mu'\in T(\mu)$ containing $\lambda$.
 
 By the description of $T(\mu)$ and the minimality of $\lambda$, there is no $\mu'\in T(\mu)$ containing $\lambda$ if and only if $\lambda_{i+1}<\mu_i+1$ whenever $(i, j)$ is a corner of $\mu$.  Precisely, by the definition of a corner in the Young diagram of $
 \mu$,  if $\mu_{i+1}<\mu_i<n-r$ then $\lambda_{i+1}<\mu_i+1$.  Equivalently, if $w_{i+1}>w_i+1$, then $v_{i+1}>w_i$. This finishes the proof.
 \end{proof}
 
{\it In an earlier version of the paper, there was an error in our calculations that results in changes in the conditions on $w_i$'s in the statement of Proposition \ref{semi-singular}. We thank Sarjick Bakshi, who pointed out this and for informing us about his statement in ~\cite{Sarjick} which is under review.}

\begin{corollary}\label{smoothquotient}Let $gcd (r, n)=1$.  If $w\in W^{P_r}$ satisfies the condition in Proposition \ref{semi-singular},   
  then the torus quotient $X_{P_r}(w)^{ss}_T(\mathcal L_r)\sslash T$ is smooth.
\end{corollary}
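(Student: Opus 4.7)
The plan is to reduce the smoothness of the quotient to two ingredients: smoothness of the semistable locus itself (already provided by Proposition \ref{semi-singular}) and freeness of the $T$-action. Once we have a smooth variety on which $T$ acts freely, the geometric quotient carries the structure of a principal $T$-bundle over the quotient and smoothness descends automatically.

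By the hypothesis and Proposition \ref{semi-singular}, the locus $X_{P_r}(w)^{ss}_T(\mathcal L_r)$ sits inside the smooth locus of $X_{P_r}(w)$, so the first ingredient is in place. For the second, I would invoke the coprimality $\gcd(r,n)=1$ to upgrade semistability to stability with trivial stabilizers. Concretely, on $Gr(r,n)$ the $T$-weights of a $T$-fixed point $e_\tau$ with respect to $\mathcal L_r$ are of the form $\omega_r-\tau(\omega_r)$, and a standard weight-polytope argument (cf.\ \cite{kannan20091torus}) shows that when $\gcd(r,n)=1$ no nontrivial integral relation can force a semistable point to sit on the boundary of the moment polytope; hence $X_{P_r}(w)^{ss}_T(\mathcal L_r)=X_{P_r}(w)^s_T(\mathcal L_r)$ and the stabilizer of every stable point in $T$ is finite, in fact trivial since the character lattice generated by differences $\omega_r-\tau(\omega_r)$ spans $X(T)$ precisely under the coprimality assumption.

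With both ingredients secured, the restriction of the quotient map
\begin{equation*}
\pi : X_{P_r}(w)^{ss}_T(\mathcal L_r) \longrightarrow X_{P_r}(w)^{ss}_T(\mathcal L_r)\sslash T
\end{equation*}
is a geometric quotient by a free action of the torus $T$, hence a principal $T$-bundle in the étale (equivalently, Zariski) topology. Principal bundles under a smooth group scheme are smooth morphisms, so $\pi$ is smooth and surjective; since the source is smooth, the target is smooth as well. This completes the proof.

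The main obstacle I anticipate is making the freeness step rigorous, namely verifying that under $\gcd(r,n)=1$ every $T$-stabilizer on $X_{P_r}(w)^{ss}_T(\mathcal L_r)$ is trivial rather than merely finite. The cleanest route is to reuse the explicit description of the unique minimal $v\in W^{P_r}$ carrying semistable points from \cite[Lemma 1.7, Lemma 2.7]{kannan20091torus}: the character $\omega_r-v(\omega_r)$ is primitive exactly when $\gcd(r,n)=1$, which then propagates to every semistable point via the $B$-orbit decomposition $X_{P_r}(w)^{ss}_T(\mathcal L_r)\subseteq \coprod_{v\le u\le w} BuP_r/P_r$ inherited from the proof of Lemma \ref{3.4}. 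Everything else is formal.
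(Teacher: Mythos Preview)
Your overall strategy matches the paper's: use Proposition~\ref{semi-singular} to place the semistable locus inside the smooth locus, use $\gcd(r,n)=1$ to get $ss=s$, and then argue that the quotient map is a principal $T$-bundle. The paper obtains the equality of semistable and stable loci by citing \cite[Theorem~3.3]{kannan1998torus} for the ambient Grassmannian and then intersecting with $X_{P_r}(w)$, rather than via a direct weight-polytope computation.

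The obstacle you yourself flag is genuine, and your proposed fix does not work as stated. The differences $\omega_r-\tau(\omega_r)$ all lie in the root lattice, which has index $n$ in the weight lattice $X(T)$ of $SL(n,\mathbb C)$; they can never span $X(T)$, regardless of $\gcd(r,n)$. Concretely, the center $\mu_n\subset T$ acts trivially on $G/P_r$, so every stabilizer contains $\mu_n$ and the $T$-action is never free. The paper deals with this by observing (with a reference to \cite[p.~194]{kannan2014git}) that the stabilizer of a stable point is finite and \emph{contained in the center}, and then passing to the adjoint group $PSL(n,\mathbb C)$: the image torus $T/\mu_n$ acts freely on the stable locus, while the quotient variety is unchanged since $\mu_n$ already acted trivially. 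With that replacement for your primitivity argument, the rest of your proof goes through verbatim.
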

\begin{proof} We have that the semistable locus  $X_{P_r}(w)^{ss}_T(\mathcal L_r)$ is contained in the smooth locus of $X_{P_r}(w)$ thanks to Proposition \ref{semi-singular}.
Since $gcd(r, n)=1$, by \cite[Theorem 3.3]{kannan1998torus}, the following equality holds: \[(G/P_r)^{ss}_T(\mathcal L_r)=(G/P_r)^s_T(\mathcal L_r).\]
 Since 
 \[X_{P_r}(w)^{ss}_T(\mathcal L_r)=X_{P_r}(w)\cap (G/P)_T^{ss}(\mathcal L_r) ~~\mbox{and}~~\]
 \[X_{P_r}(w)^{s}_T(\mathcal L_r)=X_{P_r}(w)\cap (G/P)_T^{s}(\mathcal L_r),\]
   it follows that  
 $X_{P_r}(w)^{ss}_{T}(\mathcal L_r)=X_{P_r}(w)^s_T(\mathcal L_r)$. 
Note that for any $x \in X_{P_r}(w)^{ss}_T(\mathcal L_r)$, the $T$ orbit of $x$ is closed 
 in $X_{P_r}(w)^{ss}_T(\mathcal L_r)$, the stabilizer of $x$ is finite and is contained in the center (see \cite[p.194]{kannan2014git} for more details). Therefore, by working with the adjoint 
 group of $SL_n(\mathbb C)$, we may assume that the isotropy subgroup of $x$ is trivial. In such a case, by using the above arguments we note that the quotient morphism 
 \begin{linenomath*}
   $$X_{P_r}(w)^{ss}_T(\mathcal L_r)\longrightarrow  
 X_{P_r}(w)^{ss}_T(\mathcal L_r)\sslash T$$ 
 \end{linenomath*}
 is a principal $T$-bundle. Hence, we conclude the proof.
\end{proof}

\begin{remark}
The converse of Corollary \ref{smoothquotient} is also true. Precisely, if the torus quotient $X_{P_r}(w)^{ss}_T(\mathcal L_r)\sslash T$ is smooth then $w\in W^{P_r}$ satisfies the condition in Proposition \ref{semi-singular}.
From the proof it easily follows that the quotient $X_{P_r}(w)^{ss}_T(\mathcal L_r)\sslash T$ is an orbit space. Hence, a point in $X_{P_r}(w)^{ss}_T(\mathcal L_r)$ is smooth if and only if its image is smooth in the quotient $X_{P_r}(w)^{ss}_T(\mathcal L_r)\sslash T$. 
This is observed recently in \cite{Sarjick} when this article is in the publishing process. 
\end{remark}

\noindent We conclude this section by the following observation. Recall that when $1<r<n-1$, the Grassmannian $G/P_{r}$ is not a projective space. 
\begin{lemma}\label{min} 
 Assume that $w\in W^{P_r}$ is minimal with $X_{P_r}(w)_T^{ss}(\mathcal L_r)\neq \emptyset$.
 Then the Schubert variety $X_{P_r}(w)$ is singular.
\end{lemma}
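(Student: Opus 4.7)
The plan is to argue by contradiction: suppose $X_{P_r}(w)$ is smooth. By Remark~\ref{remarksmooth}, $X_{P_r}(w)$ is then a sub-Grassmannian, concretely $\{V:F_a\subset V\subset F_{r+c}\}\cong Gr(r-a,r+c-a)$ for some $0\le a,c$ with $a+c\le n$; the corresponding Grassmannian permutation is $w=(1,\dots,a,\,a+c+1,\dots,r+c)$, and the Plücker coordinates that do not vanish on $X_{P_r}(w)$ are exactly the $p_I$ with $\{1,\dots,a\}\subset I\subset\{1,\dots,r+c\}$.

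The first step is to show that among these sub-Grassmannians only the full $G/P_r$ admits $T$-semistable points. A nonzero $T$-invariant section of $\mathcal L_r^m$ on $X_{P_r}(w)$ contains a weight-zero Plücker monomial $p_{I_1}\cdots p_{I_m}$ supported on $X_{P_r}(w)$; equivalently, the combined index multiset $I_1\sqcup\cdots\sqcup I_m$ must hit every $i\in\{1,\dots,n\}$ with a common multiplicity $K=mr/n$. The forced inclusions pin this multiplicity at $m$ for $i\le a$ and at $0$ for $i>r+c$; equating both to $K$ forces $c=n-r$ (else $K=0$) and $a=0$ (else $r=n$). Hence $X_{P_r}(w)=G/P_r$, so $w=(n-r+1,\dots,n)$.

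The second step is to contradict the minimality of this $w$ by exhibiting a strictly smaller Schubert variety with nonempty $T$-semistable locus. I will use the shifted-cyclic family $J_k:=\{k,\,k+1,\dots,k+r-2,\,k+r\}\pmod{n}$ for $k=1,\dots,n$: each $i\in\{1,\dots,n\}$ lies in exactly $r$ of the $J_k$'s, so $\prod_k p_{J_k}$ has weight $r(\epsilon_1+\cdots+\epsilon_n)=0$ in the character lattice of $T\subset SL(n,\mathbb C)$ and thus gives a $T$-invariant section of $\mathcal L_r^n$. A case split on wrap-around (using $n-r\ge 2$) shows the smallest element of every $J_k$ is at most $n-r$, with equality attained at $k=n-r$, so the componentwise maximum $w'$ of the tuples $J_k$ satisfies $w'_1\le n-r<n-r+1$; hence $w'<(n-r+1,\dots,n)$ strictly in Bruhat order. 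Since $\prod_k p_{J_k}$ is nonvanishing on the dense cell of $X_{P_r}(w')$, this produces a proper Schubert subvariety with nonempty $T$-semistable locus, contradicting minimality. The main obstacle is the wrap-around bookkeeping in the second step, and the hypotheses $r\ge 2$ and $n-r\ge 2$ (equivalent to $1<r<n-1$) are used to ensure both that the $J_k$'s are genuine $r$-subsets of $[n]$ and that the strict inequality on first coordinates holds.
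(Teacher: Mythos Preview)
Your argument is correct and follows a genuinely different route from the paper.

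The paper's proof is very short: it quotes the explicit description of the unique minimal $w$ from \cite[Lemma~2.7]{kannan20091torus}, namely $w=(a_1+1,\dots,a_r+1)$ for a certain increasing sequence $a_1<\dots<a_r$ depending on the Euclidean division $n=qr+t$, computes the associated partition $\lambda_i=n-r+i-a_i-1$, observes that not all $\lambda_i$ are equal, and concludes by Remark~\ref{remarksmooth} that $X_{P_r}(w)$ is not smooth. That is, the paper works forward from the known combinatorial formula.

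Your proof instead works backward from smoothness. You first show a clean structural fact that the paper does not isolate: among the smooth Schubert varieties (the sub-Grassmannians $\{F_a\subset V\subset F_{r+c}\}$), only the full $G/P_r$ carries $T$-semistable points, because any weight-zero Pl\"ucker monomial supported there must hit every index with the common multiplicity $mr/n$, which is incompatible with the forced inclusion $\{1,\dots,a\}\subset I_k$ and exclusion $I_k\subset\{1,\dots,r+c\}$ unless $a=0$ and $c=n-r$. You then rule out $G/P_r$ itself by producing the cyclic family $J_k$ whose product is a weight-zero section living on a strictly smaller Schubert variety. The wrap-around case analysis giving $w'_1\le n-r$ is correct (the only case where the minimum of $J_k$ exceeds $1$ after wrap-around is $k=n-r+2$, giving minimum $2\le n-r$), and the nonvanishing of $\prod_k p_{J_k}$ on $X_{P_r}(w')$ follows since each factor is nonzero on the integral variety $X_{P_r}(w')$.

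The trade-off: the paper's argument is a two-line computation once the cited formula is in hand, whereas yours is self-contained and does not depend on that external result. Your first step (smooth with semistable points $\Rightarrow$ full Grassmannian) is a statement of independent interest that is stronger than what the lemma asserts; the paper's computation, by contrast, gives no information about smooth Schubert varieties other than the minimal one.
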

\begin{proof}
  We know by \cite[Lemma 2.7]{kannan20091torus} that any minimal element $w \in W^{P_r}$ satisfying $X_{P_r}(w)_T^{ss}(\mathcal L_r)\neq \emptyset$ is of the form
 $$w=(s_{a_1}\cdots s_1)\cdots(s_{a_r}\cdots s_r),$$ where $a_i$'s are defined as follows:
 Let $n=qr+t$ with $1\leq t \leq r$; then
 \begin{linenomath*}
   \[a_i=\begin{cases}
       i(q+1) &~\mbox{if}~ i\leq t-1\\
       iq+(t-1) &~\mbox{if}~ t\leq i\leq r.
      \end{cases}
\]
\end{linenomath*}
 
 It is easy to see that  $a_1<a_2 < \ldots <a_r$. Then the Grassmannian 
 permutation $w$ of the form $$w=(a_1+1,a_2+1,a_3+1, \ldots ,a_r+1).$$ 
 For such $w\in W^{P_r}$, the associated partition is given by $$(\lambda_1\geq \lambda_2\geq \cdots \lambda_r\geq 0) ~\text{with}~ \lambda_i=n-r+i-a_i-1.$$ Since $2\leq r\leq n-2$, by the definition of $a_i$'s, it is clear that not all $\lambda_i$'s are equal. Then, there exist  $i$ and $j$ such that $\lambda_i\neq \lambda_j$. 
 Therefore, the complement of the Young diagram corresponding to $(\lambda_1\geq \lambda_2\geq \cdots \lambda_r\geq 0)$ is not a rectangle (inside $r\times(n-r)$ rectangle).
 Thus, it follows that the Schubert variety $X_{P_r}(w)$ is not smooth, thanks to Remark \ref{remarksmooth}.  
 \end{proof}

 \section{Minuscule Schubert varieties and quivers}
 
\subsection{}\label{quivers}
In this section, we classify minuscule Schubert varieties for which the semistable locus is contained in the smooth locus. 
First, we recall the definition of minuscule Schubert varieties from \cite[Definition 2.1]{lakshmibai1978geometry} and quivers associated with them from \cite{PERRIN2009505}.
\begin{definition}
 A fundamental weight $\omega$ is called {\it minuscule} if it satisfies one of the following equivalent conditions:
 \begin{enumerate}
  \item $\langle \omega,\check{\beta} \rangle \leq 1$  for all $\beta\in \Phi^+$.
  \item Every weight of $V_{\omega}$ is in the orbit of $\omega$. 
 \end{enumerate}

\end{definition}

Let $\omega$ be a minuscule weight and let $P_{\omega}$ be the associated parabolic subgroup. 
The partial flag variety $G/P_{\omega}$ is called {\it minuscule} and the Schubert varieties in $G/P_{\omega}$ are called {\it minuscule Schubert varieties}.
Table 1 describes the minuscule weights:\\

\begin{table}[h]
 \begin{center}
\begin{tabular}{ |c|c| } 
 \hline
 $G$ & Minuscule weight\\
 \hline
  Type-A & All fundamental weights. \\ 
 \hline
 Type-B & $\omega_{n}$\\
\hline
Type-C & $\omega_1$\\
\hline
Type-D & $\omega_1, \omega_{n-1}$ and  $\omega_{n}$\\
\hline
Type-$E_6$ & $\omega_1~\mbox{and}~ \omega_6$\\
\hline
Type-$E_7$ & $\omega_7$\\ 
\hline
Type-$E_8, F_4, G_2$ & None\\
 \hline
\end{tabular}
\end{center}
\caption{Minuscule weights}
\end{table}

Let $\widetilde w=s_{\beta_1}\cdots s_{\beta_r}$ be a reduced expression of  $w\in W^{P_{\omega}}$ in terms of simple reflections. 
We recall the definition of quiver $Q_{\widetilde w}$ associate with $\widetilde w$.

\begin{definition}\

 \begin{enumerate}
  \item The successor $s(i)$ and the predecessor $p(i)$ of an element $i\in \{1,2,\ldots, r\}$ are the elements defined as 
  $$s(i):=min\{j\in\{1,2,\ldots, r\}: j>i ~\mbox{and}~ \beta_j=\beta_i\},~\text{and}~$$ $$p(i):=max\{j\in\{1,2,\ldots, r\}: j<i ~\mbox{and}~ \beta_j=\beta_i\}.$$
  \item The quiver $Q_{\widetilde w}$ is the quiver whose vertices is the set $\{ 1, 2, \ldots, r\}$ and whose arrows are given by the following way: 
  there is an arrow from $i$ to $j$ if $\langle \beta_{i},\check{\beta_j} \rangle \neq 0$ and   $i<j<s(i)$ (or $i<j$ if there is no $s(i)$).
 \end{enumerate}
\end{definition}
\begin{remark}[see \cite{PERRIN2009505}]\

\begin{enumerate}
 \item 
 The quiver $Q_{\widetilde w}$ does not depend on the reduced expression we choose for $w\in W^{P_{\omega}}$ and we denote it by $Q_{w}$. 
\item Let $Q_{w_0}$ denote the quiver obtained from the longest element in $W^P$. Then the quiver $Q_{w}$ is a subquiver of $Q_{w_0}$.
\end{enumerate}

 \end{remark}

\noindent There is a natural order on the quiver $Q_{w}$ given by $i\leq j$ if there is an oriented path from $j$ to $i$.
The quiver $Q_{w}$ encodes some of the geometric properties of the Schubert variety $X_P(w)$.
We recall the following notions:

\begin{enumerate}
 \item 
A {\it peak} to be any vertex of $Q_{w}$ that is maximal for the partial order $\leq$. 

\item 
We define a {\it hole} of the quiver $Q_{w}$ to be any vertex $i$  satisfying one of the following properties:
\begin{enumerate}
    \item [(i)] the vertex $i$ is in $Q_{w}$ but $p(i)\notin Q_{w}$, and there are exactly two vertices $j_1\geq i$ and 
$j_2\geq i$ in $Q_{w}$ with $\langle \beta_j, \beta_i \rangle\neq 0$ for $j=j_1, j_2$;
\item[(ii)] the vertex $i$ is not  in $Q_{w}$, $s(i)$ does not exist in $Q_{w_0}$. 

\end{enumerate}
\end{enumerate}
Because the vertex of the second type of holes is not a vertex in $Q_{w}$, we call such a hole a {\it virtual hole} of $Q_{w}$. 

\noindent Now we recall the following result from \cite[Proposition 1.11]{PERRIN2009505}:
\begin{proposition}\label{smoothnesscriterion}
 A Schubert variety $X_P(w)$ is smooth if and only if the holes of its quiver $Q_{w}$ are virtual.
\end{proposition}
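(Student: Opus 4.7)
The plan is to convert the smoothness question into a weight-space count at a single $T$-fixed point and then read off that count from the combinatorics of the quiver. By part (2) of Remark \ref{1}, $X_P(w)$ is smooth if and only if it is smooth at $e_{id}$, so it suffices to compare $\dim T_{e_{id}} X_P(w)$ with $\ell(w) = \dim X_P(w)$.

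First I would decompose $T_{e_{id}}X_P(w)$ under the $T$-action. Because $G/P$ is minuscule, every non-Levi positive root occurs with multiplicity one in $T_{e_{id}}G/P$, so each $T$-weight space of $T_{e_{id}}X_P(w)$ is at most one-dimensional. For $\alpha \in \Phi^+\setminus \Phi^+_{L(P)}$ (where $L(P)$ is the Levi of $P$), the weight $-\alpha$ contributes exactly when the unique $T$-stable irreducible curve $\overline{U_{-\alpha}\cdot e_{id}} = \overline{B s_{\alpha}P/P}$ lies in $X_P(w)$, equivalently when $s_{\alpha}\leq w$ in the Bruhat order. Therefore
\[
\dim T_{e_{id}} X_P(w) \;=\; \#\{\alpha \in \Phi^+\setminus\Phi^+_{L(P)} : s_\alpha \leq w\},
\]
and the proposition reduces to showing that this cardinality equals $\ell(w)$ if and only if every hole of $Q_w$ is virtual.

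Fix a reduced expression $\widetilde w = s_{\beta_1}\cdots s_{\beta_r}$. Each vertex $i$ of $Q_w$ produces a distinct inversion $\gamma_i := s_{\beta_1}\cdots s_{\beta_{i-1}}(\beta_i)$ of $w$, yielding $\ell(w)$ reflections $s_{\gamma_i}$ with $s_{\gamma_i}\leq w$. The key step is to identify the remaining reflections. I would show, exploiting the subword structure of $\widetilde w$ in the minuscule setting, that
\begin{enumerate}
 \item[(a)] each essential hole $i$ of $Q_w$ produces one extra root, obtained by combining the simple reflections labelling its two upper neighbors $j_1,j_2$ with those on the unique shortest path between them into a subword of $\widetilde w$ whose Demazure product is a single reflection;
 \item[(b)] distinct essential holes yield distinct extras;
 \item[(c)] every reflection $s_\alpha \leq w$ with $\alpha\notin\Phi^+_{L(P)}$ not of the form $s_{\gamma_j}$ arises from a unique essential hole in this way;
 \item[(d)] a virtual hole contributes no extra reflection, because the missing successor in $Q_{w_0}$ prevents the required subword from occurring.
\end{enumerate}
Combining (a)--(d) gives $\dim T_{e_{id}}X_P(w) = \ell(w) + \#\{\textrm{essential holes of } Q_w\}$, so smoothness at $e_{id}$ is equivalent to all holes being virtual, which is the statement of the proposition.

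The main obstacle is the bijection (a)--(c) between essential holes and extra reflections. Its proof will require a Deodhar-type analysis of distinguished subexpressions of $\widetilde w$, combined with a case-by-case inspection of the restricted shape of $Q_{w_0}$ in each minuscule type ($A$, $B$, $C$, $D$, $E_6$, $E_7$); the specific configuration forced by condition (i) in the definition of a hole (\emph{two} neighbors above) is precisely what creates the combinatorial identity that turns a longer subword into an element of Bruhat length one. The virtual-hole case (d) should then fall out as a boundary effect, since the absence of $s(i)$ in $Q_{w_0}$ rules out the concatenation needed to build an extra reflection.
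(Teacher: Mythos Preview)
The paper does not prove this proposition. It is introduced with the sentence ``Now we recall the following result from \cite[Proposition 1.11]{PERRIN2009505}'' and is used as a black box; no argument for it appears anywhere in the text. So there is no ``paper's own proof'' to compare your proposal against.

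On the proposal itself: the reduction to the tangent space at $e_{id}$ via Remark~\ref{1}(2) and the identification of that tangent space with $\{\alpha\in\Phi^+\setminus\Phi^+_{L(P)}:s_\alpha\leq w\}$ are standard and correct in the minuscule setting. However, two points deserve care. First, your concluding formula counts \emph{essential} holes, whereas the proposition is stated in terms of \emph{non-virtual} holes; you would still need to argue that the existence of a non-virtual hole forces the existence of an essential one (this is true, but it depends on the shape of $Q_{w_0}$ and on what ``no hole in $Q_w^i$'' means for virtual holes, so it should be stated and checked). Second, you openly flag the bijection (a)--(c) between essential holes and extra reflections as the ``main obstacle'' and defer it to a Deodhar-type analysis plus a type-by-type inspection; as written this is a plan rather than a proof. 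That bijection is exactly the substantive content of Perrin's Proposition~1.11, and his argument in \cite{PERRIN2009505} proceeds via the combinatorics of the quiver and its peaks rather than via distinguished subexpressions, so if you pursue your route you will be reproving his result by a genuinely different method and should expect real work in each type.
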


To describe the irreducible components of the singular locus of $X_P(w)$ in terms of the quiver, we need the following definition:

{\bf Definition:}\

\begin{enumerate}
 \item 
 Let $i$ be a vertex of $Q_{w}$ and define the subquiver $Q_{w}^i$ of $Q_{w}$ as the full subquiver 
  containing the set of vertices $\{j\in Q_{w}: j\geq i\}$. We denote by $Q_{w, i}$ the full subquiver of  $Q_{w}$
containing the vertices of $Q_{w}\setminus Q_{w}^i$.

  \item A hole $i$ of the quiver $Q_{w}$ is said to be {\it essential} if it is not virtual and there is no hole in the subquiver $Q_{w}^i$. 
  
\item For a vertex $i$ of $Q_{w}$, we denote by $w^i$(respectively, $w_i$) the elements in $W^{P_{\omega}}$ such that $Q_{w^i}=Q_{w}^i$ 
 (respectively, $Q_{w_i}=Q_{w, i}$). 
 
\end{enumerate}

 \begin{theorem}[see Theorem 2.7 of \cite{PERRIN2009505}]\
 
 \begin{enumerate}
  \item 
   Let $i$ be an essential hole of $Q_{w}$; then the Schubert subvariety $X_P(w_{i})$ of $X_P(w)$ whose quiver is $Q_{w, i}$, is an 
  irreducible component of the singular locus of $X_P(w)$.
  
  \item All the irreducible components of the singular locus are obtained in this way.
  
 \end{enumerate}

\end{theorem}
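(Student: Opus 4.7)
The plan is to translate both parts of the theorem into a local analysis at each $T$-fixed point $e_v$ of $X_P(w)$, and then match local smoothness at $e_v$ to the combinatorial condition that $v = w_i$ for an essential hole $i$ of $Q_w$.

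First I would note that $\mathrm{Sing}(X_P(w))$ is closed and $B$-stable, hence a union of Schubert subvarieties. Consequently, its irreducible components are precisely the $X_P(v)$ with $v \leq w$ maximal such that $e_v$ is a singular point of $X_P(w)$. The theorem then amounts to exhibiting a bijection between such maximal $v$ and the essential holes $i$ of $Q_w$, given by $v = w_i$, i.e.\ $Q_v = Q_{w,i}$.

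For part (1), given an essential hole $i$ of $Q_w$, I would show that $X_P(w)$ is singular along $X_P(w_i)$. By $B$-invariance of the singular locus this reduces to singularity at $e_{w_i}$, which I would detect by a tangent-space computation: for a minuscule $G/P$ the $T$-weights appearing in $T_{e_{w_i}}X_P(w)$ are combinatorially encoded by a subset of vertices of $Q_w$ that is accessible to $w_i$. Because $i$ is a non-virtual hole of $Q_w$ with two distinct neighbours $j_1, j_2 \in Q_w^i$ satisfying $\langle \beta_{j_k},\check{\beta_i}\rangle \neq 0$, both of the corresponding weight spaces appear in the tangent space at $e_{w_i}$. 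This forces $\dim T_{e_{w_i}} X_P(w)$ to exceed $\ell(w) = \#\{\text{vertices of }Q_w\}$, so $e_{w_i}$ is singular and thus $X_P(w_i) \subseteq \mathrm{Sing}(X_P(w))$.

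For the maximality in (1) and for part (2), I would argue conversely: suppose $v \leq w$ is maximal with $e_v$ singular in $X_P(w)$. The same weight analysis at $e_v$ identifies an ``excess'' tangent direction attached to a non-virtual hole $i$ of $Q_w$; maximality of $v$ forces $Q_v = Q_{w,i}$ and moreover rules out any non-virtual hole inside $Q_w^i$, for such a hole would upgrade $v$ to some $v'>v$ with $e_{v'}$ still singular. This is precisely the condition that $i$ be essential, and establishes the bijection. The main obstacle will be making the tangent-space computation at $e_v$ rigorous: one must describe $T_{e_v}X_P(w)$ as an explicit sum of $\mathfrak{g}$-root spaces (using, for instance, the $U^{-}$-slice through $e_v$, or a Bott--Samelson resolution obtained from a reduced expression compatible with the quiver) and extract which weights survive. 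The essentialness hypothesis translates via Proposition \ref{smoothnesscriterion} applied to $Q_w^i$ into smoothness of the ``upper part'' of $X_P(w)$ at $e_{w_i}$, so the defect is localized to the single direction indexed by $i$, which is exactly what is needed to identify $X_P(w_i)$ as a maximal irreducible component and to conclude that all components arise this way.
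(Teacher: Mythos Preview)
The paper does not prove this theorem at all: it is quoted verbatim as Theorem~2.7 of \cite{PERRIN2009505} and used as a black box. There is therefore no proof in the paper to compare your proposal against.

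That said, a brief remark on your sketch. The reduction to maximal $v\le w$ with $e_v$ singular is correct and standard, and the bijection you are aiming for is indeed the content of Perrin's result. However, the heart of the matter is exactly the step you flag as ``the main obstacle'': an explicit description of the $T$-weights in $T_{e_v}X_P(w)$ for minuscule $G/P$ in terms of the quiver. Your sketch asserts that a non-virtual hole $i$ contributes two excess tangent directions via its neighbours $j_1,j_2$, but this is not quite how the counting works; the correct statement is that the tangent space at $e_{w_i}$ acquires one extra weight (corresponding to the vertex $i$ itself, viewed in $Q_{w_0}$) beyond the $\ell(w)$ weights coming from the vertices of $Q_w$, and identifying that weight and showing no further excess occurs when $i$ is essential requires the detailed combinatorics of minuscule quivers that Perrin develops. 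Without carrying out that computation your argument remains a plausible outline rather than a proof, and in any case it is not something the present paper undertakes.
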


\begin{remark}\label{exclude}
 It is known that the flag varieties in type $A$ corresponding to $\omega_r$, $r=1$, $n-1$ and in type $C$ corresponding to $\omega_1$
are projective spaces. Hence, we exclude these three cases.
\end{remark}

Let $(G, \omega)$ be a pair with a group $G$ and a minuscule weight $\omega$. Let $\mathcal L_{\omega}$ be the 
corresponding line bundle on $G/P_{\omega}$. 
For minuscule Grassmannians $G/P_{\omega}$, there is a unique minimal Schubert variety $X_{P_{\omega}}(v)$ such that $X_{P_{\omega}}(v)^{ss}_T(\mathcal L_{\omega}) \neq \emptyset$(see \cite[Lemma 1.7]{kannan20091torus}). In the sequel, we fix such $v\in W^{P_{\omega}}$. For simplicity we omit the suffix $\omega$.
\begin{proposition}\label{minimalsemisingular} 
Assume that $(G, \omega)$ is different from $(A, \omega_{r})$ with $r=1, n-1$ and $(C, \omega_1)$.
Then the minimal Schubert variety $X_P(v)$ with $X_P(v)^{ss}_T(\mathcal L) \neq \emptyset$ is singular.
\end{proposition}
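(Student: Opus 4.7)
By Proposition \ref{smoothnesscriterion}, the variety $X_P(v)$ is singular if and only if its quiver $Q_v$ contains at least one non-virtual hole. The plan is therefore to exhibit such a hole in each of the minuscule cases not listed in the hypothesis, treating type $A$ separately from the remaining types. Throughout the argument the key input is the explicit description of the minimal element $v\in W^{P_\omega}$ given by \cite[Lemma 1.7]{kannan20091torus}, namely that $v$ is the minimal element in $W^{P_\omega}$ such that $0$ lies in the convex hull of $\{u\omega : u\le v\}$.

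For the Grassmannian case $(G,\omega)=(A_{n-1},\omega_r)$ with $2\le r\le n-2$, the statement is essentially Lemma \ref{min}. There we computed the explicit form of the minimal $v$ provided by \cite[Lemma 2.7]{kannan20091torus}, showed that the associated partition inside the $r\times(n-r)$ rectangle is non-rectangular, and invoked Remark \ref{remarksmooth}. In quiver terms, an inner corner of the Young diagram of $v$ corresponds precisely to a vertex of $Q_v$ that satisfies condition (i) of the hole definition and whose successor exists in $Q_{w_0^P}$; this is an essential hole.

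For each of the remaining minuscule pairs, namely $(B_n,\omega_n)$, $(D_n,\omega_1)$, $(D_n,\omega_{n-1})$, $(D_n,\omega_n)$, $(E_6,\omega_1)$, $(E_6,\omega_6)$, and $(E_7,\omega_7)$, I would carry out the following steps. First, read off a reduced expression for the minimal $v$, equivalently identify the sub-heap $Q_v$ of the minuscule heap of $w_0^P$. Second, locate a vertex $i\in Q_v$ such that (a) $i$ lies in $Q_v$ but its predecessor $p(i)$ does not, and (b) there are exactly two vertices $j_1,j_2\in Q_v$ with $j_1,j_2\ge i$ and $\langle \beta_{j_1},\check\beta_i\rangle, \langle \beta_{j_2},\check\beta_i\rangle\neq 0$. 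Third, verify that $s(i)$ exists in $Q_{w_0^P}$, so that $i$ is not a virtual hole. Such a vertex always exists in these cases because $v$ is strictly smaller than $w_0^P$ (since $G/P$ is not a projective space) yet $Q_v$ has to be ``wide enough'' across the minuscule heap to force $0$ into the convex hull of the weights $\{u\omega : u\le v\}$; this width forces at least two peaks of $Q_v$ that both sit above a common predecessor vertex lying outside $Q_v$, producing precisely the desired configuration.

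The principal obstacle is the explicit verification in the exceptional cases $E_6$ and $E_7$, whose minuscule heaps have $16$ and $27$ vertices respectively; one must be careful to distinguish essential holes from merely virtual ones at the boundary. A potential conceptual shortcut, if one wishes to avoid case analysis, is Perrin's structure theorem that a smooth minuscule Schubert variety is itself a minuscule homogeneous space $G'/P'$: if $X_P(v)$ were smooth, then $G'/P'$ would not be a projective space (otherwise $G/P$ would be, contradicting our assumption and Remark \ref{exclude}), and applying \cite[Lemma 1.7]{kannan20091torus} inside $G'/P'$ would produce a proper $B$-stable Schubert subvariety of $X_P(v)$ that still admits $T$-semistable points, contradicting the minimality of $v$. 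Making this descent argument precise requires checking that the $T$-action on $X_P(v)\cong G'/P'$ sees enough of the Schubert stratification of $G'/P'$ to run the minimality argument, which is the delicate technical point.
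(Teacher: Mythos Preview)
Your primary approach---case-by-case analysis via the explicit reduced expressions for $v$ and Proposition \ref{smoothnesscriterion}---is exactly what the paper does. The paper draws the quiver $Q_v$ in each case and points to a non-virtual hole, citing \cite[Lemma 2.7]{kannan20091torus} for type $A$, \cite[Theorem 3.2]{Kannansantosh} for type $D$, and \cite{pattanayak2014minimal} for $E_6$ and $E_7$.

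One efficiency you miss: the paper first invokes \cite[Remark 3.4]{perrin2007small} to reduce minuscule Schubert varieties to the simply-laced setting, so $(B_n,\omega_n)$ never needs a separate treatment---it is absorbed into type $D$. Your list of cases therefore contains one that the paper handles for free.

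Your alternative conceptual route via Perrin's classification of smooth minuscule Schubert varieties is an attractive idea and genuinely different from the paper's argument. The obstacle you flag is real, though, and likely fatal in the form you state it: when $X_P(v)\cong G'/P'$, the torus $T\subset G$ acting on $X_P(v)$ need not act through a maximal torus of $G'$, so the $T$-semistable locus on $X_P(v)$ has no a priori relationship to a $T'$-semistable locus on $G'/P'$, and \cite[Lemma 1.7]{kannan20091torus} cannot be invoked inside $G'/P'$ without further work. Absent a clean compatibility statement between the two torus actions, the case-by-case argument remains the reliable path.
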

\begin{proof}

 For the minuscule flag varieties and their Schubert varieties, it is sufficient to restrict to simply laced groups (see \cite[Remark 3.4]{perrin2007small}).
Thus, we need to prove the theorem for $G$ is of type $A$, $D$ and $E$.

In the proof we use the figures, given below,  those are easy to draw for a given expression of $v$ by following the definition of the quiver associated with $v$.

\noindent\underline{\bf Type A:}
Let $ 1 < r < n-1$. By  \cite[Lemma 2.7]{kannan20091torus}), we have that
$v$ is of the form 
 \begin{linenomath*}
$$v=(s_{a_1}\cdots s_1)\cdots(s_{a_r}\cdots s_r)$$\end{linenomath*} for some increasing sequence $a_1<\ldots <a_r$. Then we obtain the quiver associated to $v$ as drawn in Fig. 1.
From Fig. 1, it is clear that the quiver has a non-virtual hole and so we conclude that $X_P(v)$ is singular thanks to Proposition \ref{smoothnesscriterion}.

We repeatedly use Proposition \ref{smoothnesscriterion} and so we do not refer it to each time in the rest of the proof.

\noindent\underline{\bf Type D:} 
If the minuscule fundamental weight is $\omega_1$, then the minimal 
$v$ such that $X(v)^{ss}_T(\mathcal L_1) \neq \emptyset$ is $v=s_ns_{n-1}s_{n-2}\cdots s_2s_1$ (see \cite[Theorem 3.2]{Kannansantosh}). 
Then from Fig. 2, it is clear that there is a hole at the node $n-2$. Hence, we conclude that $X(v)$ is singular. 

\noindent Now we consider the case $\omega_{n-1}$. The proof for $\omega_n$ is similar. 

\noindent We have $\omega_{n-1}=\frac{1}{2}(\alpha_1+2\alpha_2+ \cdots + (n-2)\alpha_{n-2})+\frac{1}{2}n\alpha_{n-1}+\frac{1}{2}(n-2)\alpha_n$ 
and the description of minimal $v \in W^P$ such that $X(v)^{ss}_T(\mathcal L_{n-1}) \neq \emptyset$ is given as follows (see \cite[Theorem 3.2]{Kannansantosh} for more details):

\noindent\underline{\bf Case-1:} $n$ is even. We have
$$v= w_{\frac{n}{2}} \cdots w_2w_1,$$ where $$w_i=\left\{ \begin{array}{ll}
         \tau_is_n & \mbox{if $i \,\, is \,\, even$};\\
         \tau_is_{n-1} & \mbox{if $i \,\, is \,\, odd$},\end{array} \right. $$
with $\tau_i=s_{2i-1}\cdots s_{n-2}$, $i=1,2, \ldots, \frac{n}{2}$.

The quiver associated with this $v$ is shown in Fig. 3.
In this case, the quiver has a non-virtual hole. So, $X(v)$ is singular. 

\noindent\underline{\bf Case-2:} $n$ is odd. We have 	
$$v= w_{[\frac{n}{2}]+1} \cdots w_2w_1,$$ where $$w_i=\left\{ \begin{array}{ll}
         \tau_is_{n-1} & \mbox{if $i \,\, is \,\, even$};\\
         \tau_is_{n} & \mbox{if $i \,\, is \,\, odd$},\end{array} \right.$$

with $$\tau_i=s_{2i-1}\cdots s_{n-2}, i=1,2, \ldots ,\frac{n}{2}+1.$$ The proof in this case is similar to the even case and so we omit the proof here.

\noindent{\bf \underline{Type E:}}

\noindent\underline{Type $(E_6)$ (respectively, $E_7$):} The minuscule fundamental weight is $\omega_1$ (respectively, $\omega_7$) and the minimal 
$v$ such that $X(v)^{ss}_T(\mathcal L_1) \neq \emptyset$ is of the form $$v=s_5s_6s_1s_3s_4s_5s_2s_4s_3s_1 ~\text{(respectively,}~v=s_5s_2s_4s_3s_7s_6s_5s_4s_1s_2s_3s_4s_5s_6s_7);$$
see \cite[p. 3818]{pattanayak2014minimal}.
Then, the quiver associated to $v$ can be described as in Fig. 4 (respectively, Fig. 5).
In both the cases, the quiver contains a non-virtual hole and so $X(v)$ is singular.
 \end{proof}

\noindent The following lemma is a reformulation of Lemma \ref{3.4}.

\begin{lemma}\label{smooth}
The semistable locus $X_P(w)^{ss}_T(\mathcal L)$ is contained in the smooth locus of $X_P(w)$ if and only if the quiver $Q_v$ is not contained in $Q_{w'}$ for all 
singular components of $X_P(w)$.
\end{lemma}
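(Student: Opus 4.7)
The plan is to show that this lemma is a direct translation of Lemma \ref{3.4} into the quiver language, by matching each of the two sets $\mathcal E_w^{ss}$ and $\mathcal E_w^{sing}$ with their quiver-theoretic counterparts. First, by \cite[Lemma 1.7]{kannan20091torus} the set $\mathcal E_w^{ss}$ is either empty (in which case both sides of the asserted equivalence hold trivially) or equal to $\{v\}$, where $v$ is the unique minimal element of $W^{P_{\omega}}$ with $X_P(v)^{ss}_T(\mathcal L)\neq\emptyset$. Hence Lemma \ref{3.4} reduces to the single condition that $w'\not\geq v$ for every $w'\in\mathcal E_w^{sing}$.

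Next, I would use Theorem 2.7 of \cite{PERRIN2009505} recalled above to identify $\mathcal E_w^{sing}$ with the essential holes of $Q_w$. That theorem says the irreducible components of $Sing(X_P(w))$ are precisely the Schubert subvarieties $X_P(w_i)$ with quiver $Q_{w,i}$, as $i$ ranges over the essential holes of $Q_w$. Since the maximal elements $w'\leq w$ for which $e_{w'}$ is singular in $X_P(w)$ are exactly the generic indices of these components, this yields $\mathcal E_w^{sing}=\{w_i : i \text{ is an essential hole of } Q_w\}$, and the singular components of $X_P(w)$ are exactly the $X_P(w_i)$.

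Finally, the plan is to translate the Bruhat order comparison into quiver containment. In the minuscule setting of \cite{PERRIN2009505}, for $u,u'\in W^{P_{\omega}}$ one has $u\leq u'$ if and only if $Q_u\subset Q_{u'}$ as subquivers of $Q_{w_0}$. Applying this equivalence to each pair $(v, w_i)$ transforms the condition $w_i\not\geq v$ for every essential hole $i$ into $Q_v\not\subset Q_{w_i}$ for every essential hole $i$, which is precisely the assertion that $Q_v$ is not contained in the quiver of any singular component of $X_P(w)$. The only real subtlety is the Bruhat-to-quiver dictionary, which is standard in Perrin's formalism; combining it with Lemma \ref{3.4} then closes the argument.
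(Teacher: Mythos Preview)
Your proposal is correct and follows exactly the approach the paper intends: the paper itself offers no separate proof for this lemma, stating only that it ``is a reformulation of Lemma \ref{3.4}.'' You have spelled out precisely how that reformulation works---reducing $\mathcal E_w^{ss}$ to $\{v\}$ via \cite[Lemma 1.7]{kannan20091torus}, identifying $\mathcal E_w^{sing}$ with the indices of the singular components via Theorem 2.7 of \cite{PERRIN2009505}, and translating the Bruhat comparison $w'\geq v$ into the quiver containment $Q_v\subset Q_{w'}$---so your argument is simply a more explicit version of the paper's one-line justification.
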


 \begin{minipage}{0.5\textwidth}
\resizebox{7cm}{7.8cm}{
$$\begin{tikzpicture}
	\vertex[fill] (1) at (-11, 6)  {};  
	\vertex [fill](2) at (-10,5) {};
	\vertex [fill](22) at (-10,7) [label=above: $a_1$] {};
		\vertex [fill](3) at (-9,4)  {};
		\vertex [fill](32) at (-9,6) {};
		\vertex [fill](42) at (-8,7)  {};
		\vertex [fill](41) at (-8, 5) {};
		\vertex [fill](52) at (-7,8) [label=above: $a_2$ ] {};
		\vertex [fill](51) at (-7,6) {};
	\vertex [fill](r-1) at (-7,2) {};
		\vertex [fill](r) at (-6,1)  {};
		\vertex [fill](r+1) at (-5.2,2)  {};
		\vertex [fill](r2) at (-6.2,3) {};
	\vertex[fill] (n-3) at (-1,7)  {};
	\vertex[fill] (n-31) at (-1,9.2) [label=above: $a_{r-1}$] {};
	\vertex[fill] (n-312) at (-2,8)  {};
	\vertex[fill] (n-2) at (0,8.2)  [label=below: ] {};
	\vertex[fill] (n-1) at (1,9.4) {};
	\vertex[fill] (n) at (2,10.6) [label=above: $a_r$]{};
	\vertex[fill]  (69) at (-4.5, 8.8) [label=above: $a_3$] {};
	\coordinate  (308) at (-3, 7.4) {};
	\vertex[fill]  (309) at (-6, 7) {};
	\node[style={circle,draw=black, very thick,inner sep=5pt}] at (32) at (-9,6) {};
	\node[style={circle,draw=black, very thick,inner sep=5pt}] at (309) at (-6,7) {};
	\node[style={circle,draw=black, very thick,inner sep=5pt}] at (n-2) at (0, 8.2) {};
	
	\path
		(1) edge (2)
		(3) edge (2)
		(3) edge [dashed] (r-1)
		(n-3) edge (n-2)
		(n-1) edge (n-2)
		(n-1) edge [dashed](n)
		(r+1) edge (n-3)
		(r) edge [dashed] (r+1)
		(r) edge (r-1)
		(r2) edge (n-31)
		(r-1) edge[dashed](r2)
		(r+1) edge (r2)
		(n-2) edge (n-31)
		(n-3) edge (n-312)
		(1) edge[dashed] (22)
		(22) edge (32)
		(32) edge (42)
		(2) edge[dashed] (32)
		(69) edge[dashed] (51)
		(41) edge (51)
		(41) edge[dashed] (3)
		(42) edge (51)
		(69) edge[dashed] (51)
		(69) edge[dashed] (308)
		(32) edge (41)
		(52) edge[dashed] (42)
		(309) edge (52)
		(41) edge [dashed](r2)

	 ;   
\end{tikzpicture}$$
}

\resizebox{8cm}{0.5cm}{
\begin{tikzpicture}
	
	\vertex[fill] (1) at (-11, 0) [label=below: 1]  {};  
	\vertex [fill](2) at (-10,0) [label=below: 2] {};
		\vertex [fill](3) at (-9,0) [label=below: 3 ] {};
	\vertex [fill](r-1) at (-7.8,0) [label=below: r-1] {};
		\vertex [fill](r) at (-6.8,0) [label=below: r ] {};
	\vertex[fill] (n-3) at (-2.4,0) [label=below: $a_{r-1}$] {};
	\vertex[fill] (n-2) at (-1.2,0)  [label=below: ] {};
	\vertex[fill] (n-1) at (0,0) [label=below: ]{};
	\vertex[fill] (n) at (1,0) [label=below: $a_r$]{};
	\coordinate (n+1) at (2,0) [label=below:]{};
	
	\path
		(1) edge (2)
		(3) edge (2)
		(3) edge [dashed] (r-1)
		(r-1) edge (r)
		 (r) edge [dashed] (n-3)
		(n-3) edge (n-2)
		(n-1) edge (n-2)
		(n-1) edge[dashed] (n)
		(n) edge[dashed] (n+1)
	
	 ;   
\end{tikzpicture}
}
$$ ~Fig.~1.~ \mbox{Type A}.$$
\end{minipage}
\hfil
 \begin{minipage}{0.45\textwidth}
\resizebox{7cm}{6cm}{
\begin{tikzpicture}
	\vertex[fill] (1) at (-5, 6) [label=below: 1]  {};  
	\vertex [fill](2) at (-4,5) [label=below: 2] {};
	\vertex [fill](22) at (-3,4) [label=below: ] {};
	\vertex[fill] (3) at (-1,2) [label=below: n-4] {};
	\vertex[fill] (4) at (0,1)  [label=below: n-3] {};
	\vertex[fill] (5) at (1,0) [label=below: n-2]{};
	\vertex[fill] (6) at (2,1)  [label= right: n-1]{};
	\vertex[fill] (7) at (3,2.5)  [label=right: n]{};
	
	\node[style={circle,draw=black, very thick,inner sep=5pt}] at (5) at (1,0) {};
	
	\path
		(1) edge (2)
		(22) edge (2)
		(22) edge [dashed] (3)
		(5) edge (4)
		(4) edge (3)
		(5) edge (6)
		(5) edge (7)

	 ;   
\end{tikzpicture}
}
\resizebox{7cm}{2cm}{
\begin{tikzpicture}
	\vertex[fill] (1) at (-5, 0) [label=below: 1]  {};  
	\vertex [fill](2) at (-4,0) [label=below: 2] {};
		\vertex [fill](22) at (-3,0) [label=below: ] {};
	\vertex[fill] (3) at (-1,0) [label=below: n-4] {};
	\vertex[fill] (4) at (0,0)  [label=below: n-3] {};
	\vertex[fill] (5) at (1,0) [label=below: n-2]{};
	\vertex[fill] (6) at (2,1)  [label= n-1]{};
	\vertex[fill] (7) at (3,-1)  [label= n]{};
	
	\path
		(1) edge (2)
		(22) edge (2)
		(22) edge[dashed] (3)
		(3) edge (4)
		(5) edge (4)
		(5) edge (6)
		(5) edge (7)
	 ;   
\end{tikzpicture}
}

$$ ~Fig.~2.~ \mbox{Type (D, $\omega_1$)}.$$

\end{minipage}

\begin{center}
 \begin{minipage}{0.45\textwidth}
\resizebox{7cm}{6cm}{
\begin{tikzpicture}
	\vertex[fill] (1) at (-5, 6) [label=below: 1]  {};  
	\vertex [fill](2) at (-4,5) [label=below: 2] {};
	\vertex [fill](22) at (-3,4) [label=below: ] {};
	\vertex [fill](222) at (-2,5) [label=below: ] {};
	\vertex[fill] (3) at (-1,2) [label=below: n-4] {};
	\vertex [fill](32) at (-3, 6) [label=below: ] {};
	\vertex[fill] (4) at (0,1)  [label=below: n-3] {};
	\vertex[fill] (5) at (1,0) [label=below: n-2]{};
	\vertex[fill] (6) at (2,1)  [label= right: n-1]{};
	\vertex[fill] (7) at (3,2.5)  [label=right: n]{};
	\vertex[fill] (52) at (1,2)  {};
	\vertex[fill] (42) at (0,3)  {};
	\node[style={circle,draw=black, very thick,inner sep=5pt}] at (2) at (-4,5) {};
	\coordinate (9) at (1,4) {};
	\coordinate (8) at (0,5) {};
	\coordinate (10) at (2,4) {};
	\path
		(1) edge (2)
		(22) edge (2)
		(32) edge (2)
		(32) edge  (222)
		(222) edge (22)
		(22) edge [dashed] (3)
		(5) edge (4)
		(4) edge (3)
		(5) edge (6)
		(52) edge (7)
		(42) edge[dashed]  (222)
		(42) edge (3)
		(42) edge (6)
		(52) edge (4)
		(5) edge (7)
		(222) edge [dashed] (8)
		(9) edge [dashed] (42)
		(10) edge [dashed] (52)

	 ;   
\end{tikzpicture}
}
\resizebox{7cm}{2cm}{
\begin{tikzpicture}
	\vertex[fill] (1) at (-5, 0) [label=below: 1]  {};  
	\vertex [fill](2) at (-4,0) [label=below: 2] {};
		\vertex [fill](22) at (-3,0) [label=below: ] {};
	\vertex[fill] (3) at (-1,0) [label=below: n-4] {};
	\vertex[fill] (4) at (0,0)  [label=below: n-3] {};
	\vertex[fill] (5) at (1,0) [label=below: n-2]{};
	\vertex[fill] (6) at (2,1)  [label= n-1]{};
	\vertex[fill] (7) at (3,-1)  [label= n]{};
	
	\path
		(1) edge (2)
		(22) edge (2)
		(22) edge[dashed] (3)
		(3) edge (4)
		(5) edge (4)
		(5) edge (6)
		(5) edge (7)
	 ;   
\end{tikzpicture}
}
$$
Fig.~ 3. ~\mbox{Type $(D, \omega_{n-1})$}$$
\end{minipage}
\end{center}

\begin{minipage}{0.45\textwidth}
\[\begin{tikzpicture}
	\vertex[fill] (1) at (-3, 0)   {};  
	\vertex [fill](3) at (-2,1)  {};
	\vertex[fill] (4) at (-1,2) {};
	\vertex[fill] (5) at (0,3)  {};
	\vertex[fill] (6) at (1,4) {};
	\vertex[fill] (2) at (-1,3)  {};
	\vertex[fill] (12) at (-3, 6)  {};  
	\vertex [fill](32) at (-2,5)  {};
	\vertex[fill] (42) at (-1,4) {};
	\vertex[fill] (52) at (0,5)  {};	
	\node[style={circle,draw=black, very thick,inner sep=5pt}] at (42) at (-1,4) {};
	\path
		(1) edge (3)
		(4) edge (2)
		(3) edge (4)
		(4) edge (5)
		(5) edge (6)
		(12) edge (32)
		(42) edge (32)
		(42) edge (2)
		(42) edge (5)
		(52) edge (6)
		(52) edge (42)

	 ;   
\end{tikzpicture}\]
\vspace{1cm}

\[\begin{tikzpicture}
	\vertex[fill] (1) at (-3, 0) [label=below: 1]  {};  
	\vertex [fill](3) at (-2,0) [label=below: 3] {};
	\vertex[fill] (4) at (-1,0) [label=below: 4] {};
	\vertex[fill] (5) at (0,0)  [label=below: 5] {};
	\vertex[fill] (6) at (1,0) [label=below: 6]{};
	\vertex[fill] (2) at (-1,1)  [label= 2]{};
	\path
		(1) edge (3)
		(3) edge (4)
		(5) edge (4)
		(2) edge (4)
		(5) edge (6)
	 ;   
\end{tikzpicture}\]
$$Fig.~4. ~\mbox{Type $E_6$}$$
\end{minipage}
\begin{minipage}{0.45\textwidth}
 \[\begin{tikzpicture}
	\vertex[fill] (1) at (-3, 7)   {};  
	\vertex [fill](3) at (-2,6)  {};
	\vertex[fill] (4) at (-1,5) {};
	\vertex[fill] (5) at (0,4)  {};
	\vertex[fill] (6) at (1,3) {};
	\vertex[fill] (7) at (2,2) {};
	\vertex[fill] (2) at (-1,6)  {};
	\vertex [fill](32) at (-2,8)  {};
	\vertex [fill](52) at (0,8)  {};
	\vertex [fill](53) at (0,10)  {};
	\vertex [fill](42) at (-1,7)  {};
	\vertex [fill](43) at (-1,9)  {};
	\vertex [fill](22) at (-1,10)  {};
	\vertex [fill](62) at (1,9)  {};
	\node[style={circle,draw=black, very thick,inner sep=5pt}] at (43) at (-1,9) {};
	\path
		(1) edge (3)
		(3) edge (4)
		(5) edge (4)
		(2) edge (4)
		(5) edge (6)
		(6) edge (7)
		(42) edge (52)
		(32) edge (42)
		(3) edge (42)
		(32) edge (1)
		(2) edge (42)
		(52) edge (43)
		(32) edge (43)
		(62) edge (52)
		(62) edge (53)
		(43) edge (53)
		(22) edge (43)
		
	 ;   
\end{tikzpicture}\]
\[\begin{tikzpicture}
	\vertex[fill] (1) at (-3, 0) [label=below: 1]  {};  
	\vertex [fill](3) at (-2,0) [label=below: 3] {};
	\vertex[fill] (4) at (-1,0) [label=below: 4] {};
	\vertex[fill] (5) at (0,0)  [label=below: 5] {};
	\vertex[fill] (6) at (1,0) [label=below: 6]{};
	\vertex[fill] (7) at (2,0) [label=below: 7]{};
	\vertex[fill] (2) at (-1,1)  [label= 2]{};
	\path
		(1) edge (3)
		(3) edge (4)
		(5) edge (4)
		(2) edge (4)
		(5) edge (6)
		(6) edge (7)
	 ;   
\end{tikzpicture}\]
$$Fig.~5. ~\mbox{Type $E_7$}$$
\end{minipage}

\noindent The following result generalizes Proposition \ref{semi-singular} to minuscule Schubert varieties.

\begin{theorem} \label{quiver1}
 The semistable locus $X_P(w)^{ss}_T(\mathcal L)$ is contained in the smooth locus of $X_P(w)$ if and only 
 if the quiver $Q_v$ contains all the essential holes of $Q_w$. 
\end{theorem}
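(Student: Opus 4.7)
The strategy is to combine Lemma~\ref{smooth} with Perrin's description of the irreducible components of the singular locus of a minuscule Schubert variety in terms of essential holes, and then translate the resulting condition into a purely combinatorial statement about $Q_v$.

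First, by Lemma~\ref{smooth}, the inclusion $X_P(w)^{ss}_T(\mathcal{L}) \subseteq X_P(w)_{\mathrm{sm}}$ is equivalent to $Q_v \not\subseteq Q_{w'}$ for every irreducible component $X_P(w')$ of $\mathrm{Sing}(X_P(w))$. By Theorem~2.7 of \cite{PERRIN2009505} (recalled above), these components are precisely the Schubert varieties $X_P(w_i)$, where $i$ ranges over the essential holes of $Q_w$, and the quiver of $w_i$ is $Q_{w_i}=Q_{w,i}$, the full subquiver of $Q_w$ supported on $Q_w \setminus Q_w^i$, where $Q_w^i = \{\, j \in Q_w : j \geq i \,\}$.

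Next I would establish the key combinatorial equivalence: for an essential hole $i$ of $Q_w$,
\[
Q_v \subseteq Q_{w,i} \quad \Longleftrightarrow \quad i \notin Q_v.
\]
The forward implication is immediate, since $i \in Q_w^i$ and therefore $i$ is not a vertex of $Q_{w,i}$. For the converse, one uses that $v \leq w$ in the Bruhat order implies, in the minuscule setting, that $Q_v$ is a subquiver of $Q_w$ which is moreover an \emph{order ideal} (a lower set) for the partial order $\leq$ on $Q_w$; this is part of Perrin's dictionary between the Bruhat order on $W^P$ and inclusions of quivers. Granting this, if $i \notin Q_v$ then no vertex $j \in Q_v$ can satisfy $j \geq i$, for otherwise the order-ideal property together with $i \in Q_w$ (recall an essential hole is by definition non-virtual, hence a vertex of $Q_w$) would force $i \in Q_v$. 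Thus every vertex of $Q_v$ lies in $Q_w \setminus Q_w^i$, giving $Q_v \subseteq Q_{w,i}$.

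Combining the two steps, the condition ``$Q_v \not\subseteq Q_{w,i}$ for every essential hole $i$ of $Q_w$'' becomes ``$i \in Q_v$ for every essential hole $i$ of $Q_w$'', which is exactly the statement that $Q_v$ contains all essential holes of $Q_w$. The one non-formal input is the order-ideal property of $Q_v$ inside $Q_w$, which is where I expect the main care is needed: everything else is a clean assembly of Lemma~\ref{smooth} and the essential-holes description of $\mathrm{Sing}(X_P(w))$.
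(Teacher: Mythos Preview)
Your proposal is correct and reaches the same conclusion as the paper, but the crucial implication ``$i\notin Q_v \Rightarrow Q_v\subseteq Q_{w,i}$'' is justified differently. You invoke the order-ideal property of $Q_v$ inside $Q_w$ (Perrin's dictionary between Bruhat order and subquivers in the minuscule case), which gives a clean one-line argument: an essential hole is non-virtual, hence a vertex of $Q_w$, and downward-closedness of $Q_v$ then forces $Q_v\cap Q_w^i=\emptyset$. The paper instead appeals to Proposition~\ref{minimalsemisingular}, a case-by-case verification that the minimal semistable Schubert variety $X_P(v)$ is always singular; this guarantees that $Q_v$ itself carries a non-virtual hole, and since by definition the subquiver $Q_w^i$ above an essential hole contains no hole, they conclude $Q_v\subseteq Q_{w,i}$. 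Your route is more conceptual and bypasses the type-by-type analysis entirely, at the cost of importing the order-ideal fact from Perrin's work; the paper's route is more self-contained within the article and also yields Proposition~\ref{minimalsemisingular} as a result of independent interest. Note that the paper's deduction from ``$Q_v$ has a hole'' and ``$Q_w^i$ has no hole'' to ``$Q_v\subseteq Q_{w,i}$'' tacitly relies on a structural fact of the same flavor as your order-ideal property, so the two arguments are closer than they first appear.
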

\begin{proof}
 Since the Schubert varieties in projective spaces are always smooth, we only need to consider minuscule Grassmannians $G/P$  that are not projecive spaces; see Remark \ref{exclude}. 
 Therefore, we need to prove the theorem for $(G, \omega)$ different from $(A, \omega_{r})$ with $r=1, n-1$ and $(C, \omega_1)$. 
 
 We first assume that  $X_P(w)^{ss}_T(\mathcal L)$ is contained in the smooth locus of $X_P(w)$. 
 By Lemma \ref{smooth}, the quiver $Q_v$ is not contained in $Q_{w'}$ for any singular component $X_P(w')$ of $X_P(w)$. 
 Let $i$ be an essential hole of $Q_{w}$ and is not contained in $Q_{v}$. We claim that the irreducible component $X_P(w^i)$ contains $X_P(v)$.
 To that end, it is enough to show that the quiver $Q_{w, i}$ contains $Q_{v}$. By Proposition \ref{minimalsemisingular},  the Schubert variety $X_P(v)$ is singular. So, the quiver $Q_{v}$ contains an essential hole. Recall that by the definition of essential hole the subquiver $Q_{w}^i$ contains no hole. Hence, the quiver $Q_{v}$ must contained in the quiver $Q_{w, i}$. This implies that the claim follows.
 
 Conversely, suppose the quiver $Q_v$ contains all essential holes of $Q_w$. Then, by similar arguments as above we can easily see that the quiver $Q_{v}$ is not contained 
 in any of the quivers $Q_{w, i}$ for all essential holes of $Q_{w}$. Hence, we conclude the proof.
  \end{proof}

\section{A special case for $G=SL(n, \mathbb C)$}
\subsection{}\label{main} Let $P=P_1\cap P_{n-1}$ be the parabolic subgroup of $G=SL(n, \mathbb C)$ corresponding to $\alpha_0=\omega_1+\omega_{n-1}$. 
Note that the homogeneous line bundle $\mathcal L_{\alpha_0}$ is very ample on the flag variety $G/P$.

The main results of the section are the following:
Let $T\subseteq B\subseteq P$ be a maximal torus. Fix $w\in W^P$ such that $X_P(w)^{ss}_{T}(\mathcal L_{\alpha_0})$ is nonempty. 
Then we prove:
\begin{theorem}\label{projectivenormality} Let $\mathcal M$ be the descent of $\mathcal L_{\alpha_0}$ to the quotient $X_P(w)^{ss}_{T}(\mathcal L_{\alpha_0})\sslash T$. Then,
 the polarized variety $(X_P(w)^{ss}_{T}(\mathcal L_{\alpha_0})\sslash T, \mathcal M)$ is projectively normal.
\end{theorem}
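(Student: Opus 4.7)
The plan is to rephrase projective normality as the surjectivity of a multiplication map on $T$-invariant sections of $\mathcal L_{\alpha_0}$ over $X_P(w)$, and then to establish that surjectivity via standard monomial theory adapted to the two-step parabolic $P=P_1\cap P_{n-1}$.

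First, I would combine the defining property of the descent $\mathcal M$ with the normality of the Schubert variety $X_P(w)$, together with the standard GIT identification of sections of $\mathcal M^{d}$ with $T$-invariant sections of $\mathcal L_{\alpha_0}^{d}$ on the semistable locus, to obtain a natural isomorphism
$$H^0\bigl(X_P(w)^{ss}_T(\mathcal L_{\alpha_0})\sslash T,\; \mathcal M^{d}\bigr) \;\cong\; H^0(X_P(w),\; \mathcal L_{\alpha_0}^{d})^T$$
for every $d\geq 0$. Under this isomorphism, projective normality of $(X_P(w)^{ss}_T(\mathcal L_{\alpha_0})\sslash T,\mathcal M)$ becomes the assertion that the multiplication map
$$\mu_d\colon \operatorname{Sym}^{d} H^0(X_P(w),\mathcal L_{\alpha_0})^T \;\longrightarrow\; H^0(X_P(w),\mathcal L_{\alpha_0}^{d})^T$$
is surjective for every $d\geq 1$.

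Next, I would invoke standard monomial theory for the two-step flag $G/P$ to obtain a basis of the section ring $\bigoplus_{d\ge 0} H^0(X_P(w), \mathcal L_{\alpha_0}^{d})$ by standard monomials supported on $X_P(w)$. Each standard monomial carries a well-defined $T$-weight (the sum of the weights of its factors), so the weight-zero standard monomials of degree $d$ form a basis of $H^0(X_P(w), \mathcal L_{\alpha_0}^{d})^T$. Since the straightening law is $T$-equivariant, the expansion of any product of weight-zero standard monomials back into the basis involves only weight-zero summands.

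The main and hardest step is to show that every weight-zero standard monomial of degree $d$ on $X_P(w)$ is a product of weight-zero standard monomials of degree $1$ on $X_P(w)$. Realizing $G/P$ as the incidence variety in $\mathbb P(V)\times \mathbb P(V^{*})$ cut out by $\sum_i x_iy_i=0$ identifies the degree-$1$ $T$-invariants on $G/P$ with the span of the diagonal forms $x_iy_i$ modulo $\sum_i x_iy_i$, so the task reduces to decomposing any weight-zero standard monomial supported on $X_P(w)$ into $d$ diagonal degree-$1$ factors, each respecting the Bruhat bound imposed by $w$. I plan to proceed by induction on $d$, using the existence of the unique minimal $v\le w$ with $X_P(v)^{ss}_T(\mathcal L_{\alpha_0})\neq\emptyset$ to guarantee at each step that a suitable degree-$1$ diagonal factor is available within the Bruhat bound by $w$, and then applying the straightening law to rewrite the quotient as a standard monomial. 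The principal obstacle is the combinatorial control of this decomposition: the incidence relation $\sum_i x_iy_i=0$ combined with the Schubert restriction can produce weight-zero standard monomials whose most obvious diagonal factorizations fall outside $X_P(w)$, and handling this requires a careful use of the explicit chain description of standard monomials for the weight $\alpha_0$.
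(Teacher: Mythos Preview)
Your overall reduction---identifying projective normality with generation of the invariant ring $\bigoplus_m H^0(X_P(w),\mathcal L_{m\alpha_0})^T$ in degree one, and attacking this via standard monomial theory---is exactly the paper's framework. But your execution diverges from the paper's and contains a real gap.

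First, a factual error: the minimal $v\le w$ with $X_P(v)^{ss}_T(\mathcal L_{\alpha_0})\neq\emptyset$ is \emph{not} unique here. Uniqueness holds for minuscule (maximal) parabolics, but $P=P_1\cap P_{n-1}$ is a two-step parabolic, and Theorem~\ref{santosh} exhibits a whole family of minimal Coxeter elements $v$ (two for each $1\le i\le n-1$). Your inductive anchor therefore does not exist as stated.

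Second, the paper avoids precisely the obstacle you flag. It lifts to $G/B$ via the $P/B$-fibration so that $H^0(X_P(w),\mathcal L_{m\alpha_0})=H^0(X(ww_{0,P}),\mathcal L_{m\alpha_0})$, and then exploits the very special shape of $m\alpha_0=m\omega_1+m\omega_{n-1}$: a standard tableau consists of $m$ rows of length $n-1$ (each an $(n{-}1)$-subset, hence ``missing'' one element) and $m$ rows of length $1$. The weight-zero condition forces the multiset of short-row entries to coincide with the multiset of missing elements of the long rows. The paper does not argue by induction or straightening; instead it runs an explicit case analysis (Lemmas~\ref{generators1}, \ref{generators2}, \ref{6.8}, \ref{5.12}, \ref{5.13}) over a concrete chain of extensions $v_{k,j}$, $u_{k,j}$, $w_{k,j}$ from each minimal $v$ up to $w_0$, computing $\dim R_1$ at each step and showing that the short-row entries of any weight-zero tableau on $X(w)$ are confined to exactly the range that indexes $R_1$. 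With this in hand, the factorization of $p_\Lambda$ into degree-one $T$-invariants is immediate---each pair (one short row, one matching long row) is already a standard degree-one invariant on $X(w)$.

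Your plan, by contrast, proposes to peel off one diagonal factor $x_iy_i$ and straighten the remainder. You correctly identify the danger: the obvious diagonal factor may not be standard on $X_P(w)$, and after straightening you have a \emph{sum} of standard monomials rather than a single one to which induction applies cleanly. You do not explain how to choose the factor or how to control the straightening, and this is exactly where the argument would break without the paper's explicit bounds on the short-row entries. The paper's route sidesteps this entirely: no straightening is needed because the rigid combinatorics of weight-zero tableaux in shape $m\alpha_0$ already aligns the pairing.
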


\noindent We deduce the following:
\begin{corollary}\label{quotient}
 The quotient $X_P(w)^{ss}_T(\mathcal L_{\alpha_0})\sslash T$  is isomorphic to a projective space.
\end{corollary}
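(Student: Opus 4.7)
The plan is to combine the projective normality established in Theorem~\ref{projectivenormality} with a standard monomial computation of the $T$-invariant section ring. By projective normality, the graded $\mathbb C$-algebra
\[
R := \bigoplus_{m\ge 0} H^0\!\left(X_P(w)^{ss}_T(\mathcal L_{\alpha_0})\sslash T,\,\mathcal M^m\right) \;\cong\; \bigoplus_{m\ge 0} H^0\!\left(X_P(w),\mathcal L_{\alpha_0}^m\right)^T
\]
is integrally closed and generated by its degree-one piece $R_1$, and the quotient is identified with $\operatorname{Proj}(R)$. Hence it suffices to show that the natural surjection $\operatorname{Sym}(R_1)\twoheadrightarrow R$ is an isomorphism, i.e., that $R$ is a polynomial ring.

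First I would apply standard monomial theory for Schubert varieties in $G/P$ relative to $\mathcal L_{\alpha_0}$. Since $G/P = SL(n,\mathbb C)/(P_1\cap P_{n-1})$ is the incidence variety $\{(L,H)\,:\,L\subset H\}\subset\mathbb P^{n-1}\times(\mathbb P^{n-1})^{*}$ and $\mathcal L_{\alpha_0}$ restricts from $\mathcal O(1)\boxtimes\mathcal O(1)$, the space $H^0(X_P(w),\mathcal L_{\alpha_0}^m)$ has a $T$-weight basis of standard monomials, and $R_m$ is spanned by those of total $T$-weight zero. In degree one these arise as (restrictions of) products $p_iq_i$ of matching Plücker coordinates on the two factors, for $i$ in a subset $I_w$ depending on $w$, subject to the single incidence relation $\sum_{i\in I_w} p_iq_i = 0$.

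The central step is to prove that every $T$-invariant standard monomial of degree $m\ge 2$ straightens uniquely into a monomial in the generators $p_iq_i$. I would argue this by induction on $m$, using the fact that the $T$-weight of a standard monomial is determined by its underlying multiset of indices: a weight-zero standard monomial in degree $m$ is therefore forced to consist of $m$ balanced pairs and hence equals a monomial in the chosen generators. The main obstacle will be carrying out this combinatorial matching on the Schubert variety rather than on all of $G/P$: one must verify that the set of admissible indices for $X_P(w)$ is compatible with the balanced weight-zero condition under the straightening laws, which I would handle by a secondary induction on $\ell(w)$ via the compatibility of the standard monomial bases with the Schubert filtration. Once the isomorphism $\operatorname{Sym}(R_1)\cong R$ is established, $\operatorname{Proj}(R)\cong\mathbb P(R_1^{\ast})$, yielding the corollary.
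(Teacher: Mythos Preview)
Your reduction to showing that the multiplication map $\operatorname{Sym}(R_1)\to R$ is an isomorphism is exactly the paper's strategy. The gap is in how you propose to verify this.

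Your ``central step''---that every $T$-invariant standard monomial of degree $m$ equals a monomial in the degree-one generators---establishes only \emph{surjectivity} of $\operatorname{Sym}(R_1)\to R$, and that is precisely the content of Theorem~\ref{projectivenormality} (generation in degree one), which you are already invoking. So your proposed induction on $m$ and on $\ell(w)$ is re-proving the theorem you start from, not deducing the corollary. What remains is \emph{injectivity}: why are there no polynomial relations among a basis $f_1,\dots,f_t$ of $R_1$? Nothing in your outline addresses this; showing that standard monomials land inside the set of monomials in the $f_l$ gives only the inequality $\dim R_m\le\dim\operatorname{Sym}^m(R_1)$, which is the same inequality the surjection already yields.

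The paper's argument is the reverse direction and is one sentence: each product $f_{l_1}\cdots f_{l_m}$ (with $f_l=p_{l}\,p_{\{1,\dots,\hat l,\dots,n\}}$ the standard-monomial basis of $R_1$ computed in Lemmas~\ref{6.4}--\ref{5.13}) is \emph{itself} a standard monomial on $X(w)$ in the Pl\"ucker coordinates, and standard monomials of degree $m$ are linearly independent in $H^0(X(w),\mathcal L_{m\alpha_0})$. Hence the $f_l$ are algebraically independent, $R\cong\mathbb C[f_1,\dots,f_t]$, and $\operatorname{Proj}(R)\cong\mathbb P^{t-1}$. No straightening and no induction are needed.

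A smaller point: your incidence-relation presentation of $R_1$ is compatible with this picture (on $X(w)$ the relation $\sum_l(\pm)p_lq_l=0$ simply expresses the one non-standard product in terms of the standard ones), but phrasing the generators as ``subject to a relation'' obscures the argument. Once you pass to the standard-monomial basis of $R_1$ the relation has already been absorbed, and it is the freeness of \emph{that} basis which carries the proof.
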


We prove the results by using the theory of standard monomials on Schubert varieties. We first observe that it is sufficient to prove these results for the full flag variety $G/B$. Indeed, since $\pi: G/B\to G/P$ is a fibration with fiber $P/B$, we have the following equality:
\begin{equation}\label{tofullflag}
    H^0(G/B, \pi^*\mathcal L_{\alpha_0})=H^0(G/P, \mathcal L_{\alpha_0})
\end{equation}

Let $w\in W^P$. Then the restriction of $\pi$ to $X(ww_{0,P})\to X_P(w)$ is also a $P/B$-fibration, where $w_{0,P}$ is the longest element in $W_P$. Hence, it follows that $H^0(X(ww_{0,P}), \pi^*\mathcal L_{\alpha_0})=H^0(X_P(w), \mathcal L_{\alpha_0})$
(see \cite[p.104]{LMS} for more details).
So from now on, we work with the Schubert varieties inside the full flag variety $G/B$.

We thus, start by recalling some basic notions on standard monomial theory: see \cite[p. 120]{lakshmibaigrassmannian} for more details.
\subsection{Standard monomial theory on Schubert varieties}\label{standard1}
Let us fix a dominant integral weight $\lambda$. For this $\lambda$ we associate a Young diagram with at most $n-1$ columns. Let $m_i$ be the number of rows 
of $\lambda$ of length $i$, for $1\leq i\leq n-1$. We denote the rows of $\lambda$ from bottom to top of length $i$ by 
\begin{linenomath*}$$ \tau_{i,1}, \tau_{i,2}, \ldots ,\tau_{i,m_i}.$$\end{linenomath*}
Thus, rows of $\lambda$ from bottom to top are denoted by \begin{linenomath*}
$$ \tau_{1,1}, \tau_{1,2}, \ldots ,\tau_{1,m_1}, \ldots  ,\tau_{(n-1),1}, \tau_{(n-1),2}, \ldots ,\tau_{(n-1),m_{n-1}}.$$\end{linenomath*}
Note that if $m_i=0$ for some $i$, then $\{\tau_{i,j}: 1\leq j \leq m_i\}$ is empty.
Let $\Lambda$ be a standard tableau of shape $\lambda$ with entries from $\{1, \ldots, n\}$. Then, a row $\tau_{i,j}$ can be thought of as an element 
of $I_{i, n}:=\{1\leq j_1<\cdots j_i\leq n\}$. For a given $$w=(w(1), \ldots, w(n))=(j_1,\ldots, j_n)\in S_n,$$ let $\pi_i(w)=(j_1, \ldots, j_i)\in I_{i.j}$, where the elements
$j_1,j_2,\ldots, j_i$ are rearranged in increasing order.

{\it \bf Young tableau on $X(w)$}: Consider a tableau $\Lambda$ of shape $\lambda$ with entries in $\{1, \ldots, n\}$. We say that $\lambda$ is 
a {\it Young tableau on $X(w)$} if \begin{linenomath*}$$\pi_i(w)\geq \tau_{i,j}~\mbox{for all}~1\leq j \leq m_i \,\, \mbox{and for all} \,\, 1\leq i\leq n-1.$$\end{linenomath*}
Note that in this case $p_{\Lambda}|_{X{w}}\neq 0$. In addition, we say that $\Lambda$ is a {\it standard tableau} on $X(w)$ if there exists a sequence of elements in $S_n$ 
\begin{linenomath*}$$\phi_{1,1}, \ldots , \phi_{1, m_1}, \ldots, \phi_{(n-1),1}, \ldots , \phi_{(n-1), m_{(n-1)}}$$\end{linenomath*} such that 
\begin{linenomath*}$$X(w)\supseteq X(\phi_{1,1})\supseteq\cdots \supseteq X(\phi_{1, m_1})\supseteq X(\phi_{2,1})\supseteq \cdots \supseteq X(\phi_{(n-1), m_{n-1}})$$\end{linenomath*}
and \begin{linenomath*} $$\pi_i(\phi_{i,j})=\tau_{i,j}~\mbox{for all}~ 1\leq j\leq m_i ~\mbox{and}~ 1\leq i \leq n-1.$$\end{linenomath*}

Let $\mathcal L_{n, \lambda}^{w}$ be the set of Young tableau of shape $\lambda$ on $X(w)$. 
We have the following theorem (see \cite[Theorem 8.2.9, p.121]{lakshmibaigrassmannian}).
\begin{theorem}
 The set $\{p_{\Lambda} ~;~ \Lambda\in \mathcal L_{n, \lambda}^{w}\}$  forms a basis for $H^0(X(w), L_{\lambda})$.
\end{theorem}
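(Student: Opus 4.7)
The plan is to establish this standard-monomial basis theorem for $H^0(X(w), L_\lambda)$ along classical Lakshmibai--Musili--Seshadri lines, in three steps: (i) linear independence of the $p_\Lambda$, (ii) spanning via straightening, and (iii) a dimension count matching the Demazure character formula. Since the author has just recalled the combinatorial set-up of standard tableaux together with their defining chains $\phi_{i,j}$, the natural strategy is to exploit the chain structure to reduce everything to questions about $T$-fixed points and Pl\"ucker relations.

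For linear independence I would induct on the Bruhat order. Given a hypothetical nontrivial relation $\sum c_\Lambda p_\Lambda = 0$ on $X(w)$, pick $\Lambda_0$ with $c_{\Lambda_0}\neq 0$ whose defining chain $\phi_{1,1}^{\Lambda_0}\geq \phi_{1,2}^{\Lambda_0}\geq\cdots$ is maximal in a lexicographic refinement of Bruhat order. Localize the relation at the $T$-fixed point $e_{\phi_{1,1}^{\Lambda_0}}$ and use the standard vanishing criterion that a Pl\"ucker coordinate $p_\tau$ vanishes at $e_\phi$ unless $\pi_i(\phi)\geq \tau$: the terms $p_{\Lambda'}$ whose defining chains do not reach $\phi_{1,1}^{\Lambda_0}$ drop out. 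Peeling off the top row of $\Lambda_0$ and iterating down its chain isolates $c_{\Lambda_0}$ and forces $c_{\Lambda_0}=0$, a contradiction.

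For spanning I would use the quadratic Pl\"ucker (straightening) relations on $G/B$: a non-standard product $p_{\tau_1}p_{\tau_2}$ with $\tau_1,\tau_2$ incomparable in Bruhat order rewrites as $\sum p_{\tau_1'}p_{\tau_2'}$ with $\tau_1'\leq \tau_1\wedge\tau_2$ and $\tau_2'\geq \tau_1\vee\tau_2$. Because $X(w)$ is $B$-stable, the non-vanishing condition $\pi_i(w)\geq \tau_{i,j}$ is preserved under the substitution, so each straightened summand remains a Young tableau on $X(w)$. Induction on a carefully chosen lexicographic order on tableaux then reduces an arbitrary monomial to a linear combination of standard ones. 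To match dimensions, I would invoke Ramanathan's vanishing $H^i(X(w), L_\lambda)=0$ for $i>0$ together with the Demazure character formula, which computes $\dim H^0(X(w), L_\lambda)$; a type-$A$ combinatorial identity (via a Gelfand--Tsetlin or path model) then matches this dimension with $|\mathcal L^w_{n,\lambda}|$, completing the argument.

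The main obstacle I expect is the spanning step: one must set up the monomial order on tableaux so that every straightening is a strict reduction, and verify that the chain-theoretic standardness is preserved by the substitution within $X(w)$. The fine combinatorics of how the Pl\"ucker relations interact with the Bruhat order on the quotients $W^{P_{\omega_i}}$ is the technical heart of the argument, and is also what makes the resulting basis so well adapted to restriction from $G/B$ to Schubert subvarieties, which is the feature that will be used to prove projective normality in the rest of the section.
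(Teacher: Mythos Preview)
The paper does not prove this theorem at all: it is quoted as a known result with the citation ``see \cite[Theorem 8.2.9, p.121]{lakshmibaigrassmannian}'' and is used as a black box in the subsequent computations of $T$-invariants. So there is no ``paper's own proof'' to compare your proposal against.

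Your outline is a reasonable sketch of the classical Lakshmibai--Musili--Seshadri argument, and is essentially the strategy carried out in the cited reference. If the intent is to supply a proof where the paper gives none, your plan is sound in broad strokes, but be aware that the straightening step (ii) is genuinely delicate: you must check that the rewriting of a non-standard product stays within the set of tableaux whose rows satisfy $\pi_i(w)\geq \tau_{i,j}$, and that the chosen monomial order terminates. Also, step (iii) is somewhat redundant once (i) and (ii) are established, since linear independence plus spanning already gives a basis; the dimension count via Demazure is an alternative route to spanning, not an additional verification. If your goal is merely to use the theorem as the paper does, a citation suffices.
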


We now recall the definition of weight of a standard Young tableau $\Lambda$ (see \cite[Sec. 2]{littelmann}). For a positive integer $1\leq i\leq n$, we denote $c_{\Lambda}(i)$ by
the number of boxes of $\Lambda$ containing the integer $i$. Let $\epsilon_i: T\to \mathbb G_m$  
be the character defined as $\epsilon_i(diag(t_1,\ldots, t_n))=t_i$. Recall that the $i^{th}$ fundamental weight is given by 
$$\omega_i=\epsilon_1+ \cdots +\epsilon_i.$$ Then we define the weight of $\Lambda$ as follows 
\begin{linenomath*}$$wt(\Lambda):=c_{\Lambda}(1)\epsilon_1+ \cdots + c_{\Lambda}(n)\epsilon_n.$$\end{linenomath*}

\noindent The following result is well known and for completeness we give a proof here.
\begin{lemma}\label{6.2}
The section $p_{\Lambda}\in H^0(X(w), \mathcal L_{\lambda})$ is $T$-invariant if and only if  all $i$ appear in $\Lambda$ and with equal number of times. 
\end{lemma}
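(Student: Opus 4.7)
The plan is to reduce the question to computing the $T$-weight of $p_{\Lambda}$ and then checking when that weight is trivial in the character lattice $X(T)$ of the maximal torus $T \subset SL(n,\mathbb{C})$.

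First I would recall that each individual Pl\"ucker coordinate $p_{(j_1<\cdots<j_i)}$ associated with a single row is a $T$-eigenvector in $H^0(G/B, \mathcal L_{\omega_i})$ of weight $\epsilon_{j_1}+\cdots+\epsilon_{j_i}$. Since $p_\Lambda$ is (by definition) the product of the Pl\"ucker coordinates indexed by the rows $\tau_{i,j}$ of $\Lambda$, its $T$-weight is the sum of the row weights. Regrouping contributions by the value $\ell\in\{1,\ldots,n\}$ in each box gives
\[
\mathrm{wt}(p_\Lambda) \;=\; \sum_{i=1}^{n-1}\sum_{j=1}^{m_i}\sum_{k=1}^{i}\epsilon_{(\tau_{i,j})_k} \;=\; \sum_{\ell=1}^{n} c_\Lambda(\ell)\,\epsilon_\ell \;=\; \mathrm{wt}(\Lambda),
\]
which matches the definition recalled just before the lemma.

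Next I would use that $T$ consists of diagonal matrices of determinant $1$, so $X(T)=\bigoplus_{i=1}^n\mathbb{Z}\epsilon_i$ modulo the single relation $\epsilon_1+\cdots+\epsilon_n=0$. Consequently $\mathrm{wt}(\Lambda)$ vanishes in $X(T)$ if and only if $c_\Lambda(1)=c_\Lambda(2)=\cdots=c_\Lambda(n)$. Since a $T$-weight vector is $T$-invariant precisely when its weight is zero, this gives both implications: if $\lambda\neq 0$ then the common value of the $c_\Lambda(\ell)$ must equal $|\lambda|/n$, which is a positive integer, so each $\ell\in\{1,\ldots,n\}$ appears in $\Lambda$ exactly that many times; conversely, any tableau with equal counts $c_\Lambda(\ell)$ has $\mathrm{wt}(\Lambda)=0$ in $X(T)$, so $p_\Lambda$ is $T$-invariant.

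There is no serious obstacle here; the only thing to keep track of is the sign convention for the $T$-action on sections of $\mathcal L_\lambda$ and the associated sign in the weight of a Pl\"ucker coordinate, but this convention is immaterial for deciding whether the weight is trivial, so it does not affect the conclusion.
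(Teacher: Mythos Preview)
Your argument is correct and follows essentially the same route as the paper: compute the $T$-weight of $p_\Lambda$ as $\pm\sum_\ell c_\Lambda(\ell)\,\epsilon_\ell$ and use the single relation $\sum_\ell \epsilon_\ell=0$ in $X(T)$ to conclude that this weight vanishes precisely when all $c_\Lambda(\ell)$ coincide. The paper records the sign as negative (since $p_{i_1,\ldots,i_r}$ is dual to $e_{i_1}\wedge\cdots\wedge e_{i_r}$), but as you note this is immaterial for the invariance criterion.
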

\begin{proof}
 Recall that the action of $T$ on $H^0(X(w), \mathcal L_{\lambda})$ is given by \begin{linenomath*} $$(t_1, \ldots, t_n)\cdot p_{i_1,i_2,\ldots, i_r}=(t_{i_1}\cdots t_{i_r})^{-1}p_{i_1,i_2, \ldots, i_r}.$$ \end{linenomath*}
Since $p_{i_1,\ldots, i_r}$ is the dual of $e_{i_1}\wedge \cdots \wedge e_{i_r}$, the weight of $p_{i_1,\ldots, i_r}$ is $-(\epsilon_{i_1}+\cdots +\epsilon_{i_r})$.
Thus, the weight of $p_{\Lambda}$ is same as negative of weight of the tableau $\Lambda$.
Therefore, it follows  that the section $p_{\Lambda}$ is $T$-invariant if and only if the weight of $\Lambda$ is zero.
Since the weight of $\Lambda$ is $$\sum_{i=1}^{n}c(i)\epsilon_i$$ and $\sum_{i=1}^{n}\epsilon_i=0$, we conclude that the section $p_{\Lambda}$ is $T$-invariant if and only if 
$c(i)=c(j)$ for all $1\leq i, j \leq n$ and this proves the lemma.
\end{proof}

\noindent We finish this section with the following observation.

\begin{lemma}
 Let $v\leq w$ and let $P_{\Lambda}$ be a standard monomial on $X(v)$. Then $P_{\Lambda}$ is also a standard monomial on $X(w)$.
\end{lemma}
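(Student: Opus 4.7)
My plan is to verify directly that the two defining conditions for $p_\Lambda$ to be a standard monomial on $X(w)$, namely the Young condition and the standardness condition, follow from the analogous conditions on $X(v)$ together with the hypothesis $v\leq w$.

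First I would check the Young condition. By hypothesis, $\Lambda$ is a Young tableau on $X(v)$, so $\pi_i(v)\geq \tau_{i,j}$ for all $1\leq i\leq n-1$ and $1\leq j\leq m_i$. Since $v\leq w$ in the Bruhat order on $S_n$, the induced order on the quotient satisfies $\pi_i(v)\leq \pi_i(w)$ in $I(i,n)$ for every $i$ (this is the well-known compatibility of Bruhat order with projections $W\to W/W_{P_i}$, reflecting the containment $X_{P_i}(\pi_i(v))\subseteq X_{P_i}(\pi_i(w))$). Combining the two inequalities yields $\pi_i(w)\geq \pi_i(v)\geq \tau_{i,j}$, which is exactly the Young condition on $X(w)$.

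Next I would verify standardness. Since $p_\Lambda$ is standard on $X(v)$, there exists a witnessing chain
\[
X(v)\supseteq X(\phi_{1,1})\supseteq\cdots \supseteq X(\phi_{(n-1),m_{n-1}})
\]
with $\pi_i(\phi_{i,j})=\tau_{i,j}$ for every $(i,j)$. Because $v\leq w$ gives $X(v)\subseteq X(w)$, I can prepend $X(w)$ to obtain the chain
\[
X(w)\supseteq X(v)\supseteq X(\phi_{1,1})\supseteq\cdots \supseteq X(\phi_{(n-1),m_{n-1}}),
\]
and this is a valid witnessing chain for $\Lambda$ on $X(w)$ (the row data $\pi_i(\phi_{i,j})=\tau_{i,j}$ are unchanged). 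Hence $p_\Lambda$ is a standard monomial on $X(w)$.

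There is no serious obstacle here; the only small point to be careful about is the direction of the order-reversal between Bruhat order and Schubert-variety inclusion (a larger element gives a larger Schubert variety), and the fact that the Bruhat order behaves monotonically under the projection $W\to W^{P_i}$ so that the Young inequality $\pi_i(w)\geq \tau_{i,j}$ propagates from $v$ to $w$.
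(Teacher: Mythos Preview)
Your proof is correct and follows essentially the same approach as the paper: both arguments rest on the observation that the admissible (witnessing) sequence $\phi_{1,1},\ldots,\phi_{n-1,m_{n-1}}$ for $\Lambda$ on $X(v)$ remains a valid witnessing sequence on $X(w)$ because $X(v)\subseteq X(w)$. The paper frames the first step via surjectivity of the restriction map $H^0(X(w),\mathcal L_{\alpha_0})\to H^0(X(v),\mathcal L_{\alpha_0})$ rather than by directly checking the Young inequality $\pi_i(w)\geq\tau_{i,j}$ as you do, but the substance is the same.
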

\begin{proof} It is well known that we have the following surjective homomorphism given by restrictions of sections\begin{linenomath*}
$$H^0(X(w), \mathcal L_{\alpha_0})\to H^0(X(v), \mathcal L_{\alpha_0}).$$\end{linenomath*}
Since the standard monomials on $X(v)$ form a basis for the space $H^0(X(v), \mathcal L_{\alpha_0})$, it follows that the section
$P_{\Lambda}$ can be lifted to $X(w)$ and it is standard on $X(w)$ as we can choose the same admissible sequence 
that was taken for $X(v)$.
\end{proof}

\subsection{Homogeneous coordinate ring.}\label{coordinatering}
In this subsection, we study the homogeneous coordinate ring of the 
torus quotient of Schubert varieties explicitly using standard monomials.

Here, we always assume that $X_P(w)^{ss}_{T}(\mathcal L_{\alpha_0})\neq \emptyset$.
Recall that the homogeneous coordinate ring of the quotient $X_P(w)^{ss}_T(\mathcal L_{\alpha_0})\sslash T$ is given by 
$$\bigoplus_{m\geq 0}H^0(X_P(w), \mathcal L_{m\alpha_0})^T$$
(see \cite[Theorem 3.14, p.76]{GITmumford}).

As we discussed in Sec.\ref{main}, we reduce 
to the case of the full flag variety $G/B$.
Therefore, from now on we consider Schubert varieties inside $G/B$. By abuse of notation,  we also use $w$ to parametrise a Schubert variety in $G/B$.  That is, we consider $w$ as an element of the symmetric group $S_{n}$.

{\it The idea is to start with a minimal $v$ associated to a Schubert variety admitting semistable points and extend it to $w_0$, the longest element of the Weyl group. In this way,  we obtain the results for all Schubert varieties admitting semistable points. In this process, we use some nice structural properties between the Young tableaux and the standard monomials on Schubert varieties. In our proofs we use the results from Sec.\ref{standard1} freely without mentioning them explicitly.}

We start with the description of minimal Schubert varieties in $G/B$ from \cite[Theorem 4.2]{Kannansantosh}.
\begin{theorem}\label{santosh}
An element $v \in W=S_{n}$ is minimal with $X(v)^{ss}_{T}(\mathcal L_{\alpha_0})\neq \emptyset$ if and only if $v$ is of the form 
 \begin{enumerate}
  \item $v=s_{i+1}\cdots s_{n-1}s_i\cdots s_1$ for some $1\leq i\leq n-1$; or
\item $v=s_is_{i-1}\cdots s_{1}s_{i+1}\cdots s_{n-1}$ for some $1\leq i\leq n-1$.
 \end{enumerate}
  \end{theorem}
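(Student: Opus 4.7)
The plan is to apply the standard monomial theory from Section~\ref{standard1} combined with a Bruhat-order analysis in $S_n$. By Mumford's numerical criterion together with Lemma~\ref{6.2}, the condition $X(v)^{ss}_T(\mathcal{L}_{\alpha_0}) \neq \emptyset$ is equivalent to the existence of some $m \geq 1$ and a standard tableau $\Lambda$ of shape $m\alpha_0$ on $X(v)$ in which every element of $\{1,\ldots,n\}$ appears exactly $m$ times. Since $\alpha_0 = \omega_1 + \omega_{n-1}$, such a tableau consists of $m$ rows of length $1$ and $m$ rows of length $n-1$. I would first focus on the case $m = 1$: a $T$-invariant tableau is determined by the entry $a \in \{1,\ldots,n\}$ in the length-one row, with the length-$(n-1)$ row being $\{1,\ldots,n\}\setminus\{a\}$ in increasing order; standardness on $X(v)$ then amounts to finding an admissible pair $v \geq \phi_{1} \geq \phi_{n-1}$ in $S_n$ with $\phi_{1}(1) = a$ and $\phi_{n-1}(n) = a$.

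For the ``if'' direction, for each $v$ in the list I would exhibit such an admissible pair by taking $\phi_1 = v$ itself and $\phi_{n-1} = s_a s_{a+1} \cdots s_{n-1}$, the Bruhat-minimal permutation with $\phi_{n-1}(n) = a$, where $a = v(1)$. The containment $\phi_{n-1} \leq v$ is verified by extracting $s_a s_{a+1} \cdots s_{n-1}$ as a subword of the reduced expression in case~(1) or~(2). For the ``only if'' direction I would classify, for each $a \in \{1,\ldots,n\}$, the Bruhat-minimal $\phi_1$ with $\phi_1(1) = a$ admitting some $\phi_{n-1} \leq \phi_1$ with $\phi_{n-1}(n) = a$. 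The case $a = 1$ is excluded: $\phi_1(1) = 1$ confines $\phi_1$ to the parabolic subgroup fixing the first position, whereas any permutation with last entry $1$ contains $s_1 s_2 \cdots s_{n-1}$ as a subword, which no element of that subgroup admits. For $a \geq 2$, a direct subword analysis identifies the minimum as $s_{a-1} s_a \cdots s_{n-1} s_{a-2} \cdots s_1$; using the commutations $[s_j, s_k] = 1$ for $|j - k| \geq 2$, this coincides with $s_{a-1} s_{a-2} \cdots s_1 s_a s_{a+1} \cdots s_{n-1}$, yielding case~(1) with $i = a - 2$ (for $a \geq 3$) and case~(2) with $i = a - 1$ (for $a \geq 2$), which accounts for the two parametrizations in the statement.

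The main obstacle is handling $m \geq 2$: one must verify that higher powers cannot produce a strictly smaller minimal $v$. Here I would use that the admissible sequence $\phi_{1,1} \geq \cdots \geq \phi_{n-1,m}$, together with Bruhat-monotonicity of the projections $\pi_i$, forces the length-one entries $a_j := \phi_{1,j}(1)$ to satisfy $a_1 \geq \cdots \geq a_m$ and the missing entries $b_j$ of the length-$(n-1)$ rows to satisfy $b_1 \leq \cdots \leq b_m$. The balance condition $\{a_j\} = \{b_j\}$ as multisets then forces $a_1 = b_m$, so that the pair $(\phi_{1,1}, \phi_{n-1,m})$ satisfies the $m = 1$ constraint with common value $a_1$. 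Hence $v \geq \phi_{1,1} \geq v_{a_1}$ for some $a_1 \in \{2,\ldots,n\}$, reducing the general case to the $m = 1$ classification already established.
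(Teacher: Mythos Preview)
Your proposal is essentially correct, but it takes a genuinely different route from what the paper does. The paper does not prove Theorem~\ref{santosh} at all: it is quoted from \cite[Theorem~4.2]{Kannansantosh}, and the only argument supplied is the Remark that follows, which explains why the minimal $v$ must be a Coxeter element. The method in \cite{Kannansantosh} rests on a root-theoretic criterion (their Lemma~2.1): $X(v)^{ss}_T(\mathcal L_{\alpha_0})\neq\emptyset$ forces $v(\alpha_0)<0$, i.e.\ $v(1)>v(n)$ in one-line notation, and then one classifies the Bruhat-minimal permutations with this property. Your approach instead stays entirely inside the standard-monomial framework of Section~\ref{standard1}: you translate semistability into the existence of a $T$-invariant standard tableau via Lemma~\ref{6.2}, reduce $m\ge 2$ to $m=1$ by a neat multiset argument, and then classify the minimal $\phi_1$ directly. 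This has the advantage of being self-contained within the paper and of illustrating concretely how the tools of Section~\ref{standard1} control the invariant ring; the cited approach is shorter once the general criterion $v(\chi)<0$ is available, and it explains conceptually why Coxeter elements appear.

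Two points where your outline could be tightened. First, the step you label ``a direct subword analysis'' is the heart of the classification and deserves an explicit argument: the condition $\phi_1(1)=a$ together with $\phi_1\geq s_a\cdots s_{n-1}$ is equivalent (via projection to $W^{P_{n-1}}$) to $\phi_1(1)=a$ and $\phi_1(n)<a$; an inversion count then shows $\ell(\phi_1)\geq n-1$ with equality only for $\phi_1=(a,1,\ldots,a-2,a+1,\ldots,n,a-1)$, which is your $v_a$. Second, to conclude that \emph{every} $v_a$ is Bruhat-minimal (not just minimal for its own $a$) you should note that the $v_a$ for $a\in\{2,\ldots,n\}$ are distinct Coxeter elements of the same length $n-1$, hence pairwise incomparable.
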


Here, notice that for $i=n-1, n-2$ the element $v$ is $s_{n-1}\cdots \cdots s_1$.

  \begin{remark}
   Theorem 4.2 of \cite{Kannansantosh} is stated for Coxeter elements but by \cite[Lemma 2.1]{Kannansantosh} we need to have $v(\alpha_0)<0$. This happens only if all the simple reflections appear in $v$. 
  \end{remark}
  
 Let $v \in W=S_{n}$ be minimal such that $X(v)^{ss}_{T}(\mathcal L_{\alpha_0})\neq \emptyset$. Now for $w \geq v$ we describe explicitly the space $H^0(X(w), \mathcal L_{\alpha_0})^T$ of $T$-invariant sections on the Schubert variety $X(w)$. 
  
Recall that $\alpha_0=\omega_1+\omega_{n-1}$. Let $\Lambda$ be a tableau of shape $m\alpha_0$, $m \in \mathbb N$.  From now on, we denote $\Lambda$ by the sequence (as above)
\begin{linenomath*} $$ \Lambda= \tau_{1,1},\ldots, \tau_{1,m}, \tau_{(n-1),1},\ldots ,\tau_{(n-1), m}.$$\end{linenomath*}

 The corresponding standard monomial is denoted by $$p_{\tau_{1,1}}\cdots p_{\tau_{1,m}}p_{\tau_{(n-1), 1}} \ldots  p_{\tau_{(n-1), m}}.$$
 Let $w=(w(1), w(2), \ldots, w(n))$ be the one-line notation for $w\in S_n$. 
\begin{lemma}\label{6.4} Let $v$ be as in Theorem \ref{santosh}. Then,

\begin{enumerate}
 \item  The section $p_{\Lambda}\in H^0(X(v), \mathcal L_{\alpha_0})$ is $T$-invariant if and only if $$\tau_{1,1}=\{v(1)\}~\text{and}~\tau_{(n-1), 1}=\{1,2, \ldots, v(1)-1, v(1)+1, \ldots, n\}.$$
\item The section $p_{\Lambda}\in  H^0(X(v), \mathcal L_{m\alpha_0})$ is $T$-invariant  if and only if $$p_{\Lambda}=(p_{\tau_{1,1}}p_{\tau_{(n-1), 1}})^m$$ with $\tau_{1,1}$ and $\tau_{(n-1), 1}$ are as in (1).
\end{enumerate}

\end{lemma}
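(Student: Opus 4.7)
The strategy is to reduce $T$-invariance of standard monomials on $X(v)$ to a combinatorial condition via Lemma \ref{6.2}, and then to analyse the admissible chain definition of standardness using the Bruhat structure of $v$. As a first step I would compute $v$ in one-line notation: from either family in Theorem \ref{santosh}, a direct composition of the defining simple reflections shows that
\begin{linenomath*}
\[
v = (j,\, 1,\, 2,\, \ldots,\, j-2,\, j+1,\, j+2,\, \ldots,\, n,\, j-1)
\]
\end{linenomath*}
for a suitable $j \in \{2, 3, \ldots, n\}$. In particular $v(1) = j$, $v(n) = j - 1 = v(1) - 1$, and the sorted prefixes of $v$ can be read off explicitly; for example $\pi_1(v) = \{v(1)\}$, $\pi_{n-1}(v) = \{1, \ldots, n\} \setminus \{v(n)\}$, and $\pi_{v(1) - 1}(v) = (1, 2, \ldots, v(1) - 2, v(1))$.

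For part (1), a tableau of shape $\alpha_0$ has exactly $n$ boxes, so Lemma \ref{6.2} forces $\tau_{1, 1} = \{a\}$ and $\tau_{(n-1), 1} = \{1, \ldots, n\} \setminus \{a\}$ for some $a$; the Young tableau conditions $\pi_1(v) \geq \tau_{1, 1}$ and $\pi_{n-1}(v) \geq \tau_{(n-1), 1}$ translate into $v(n) \leq a \leq v(1)$, so $a \in \{v(1) - 1, v(1)\}$. For $a = v(1)$ I would verify standardness by taking $\phi_{1, 1} = v$ and $\phi_{(n-1), 1} = (1, 2, \ldots, j-1, j+1, \ldots, n, j)$: a componentwise comparison of sorted prefixes shows $\phi_{(n-1), 1} \leq v$, so the admissible chain exists.

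The main obstacle is to rule out $a = v(n) = v(1) - 1$. Suppose an admissible chain $X(v) \supseteq X(\phi_{1, 1}) \supseteq X(\phi_{(n-1), 1})$ exists for this $a$. The Bruhat inequality $\pi_{n-1}(\phi_{1, 1}) \geq \pi_{n-1}(\phi_{(n-1), 1}) = \{1, \ldots, n\} \setminus \{v(n)\}$, together with $\phi_{1, 1}(1) = v(n)$, forces $\phi_{1, 1}(n) < v(n)$. On the other hand, the Bruhat inequality $\pi_{v(1) - 1}(\phi_{1, 1}) \leq \pi_{v(1) - 1}(v) = (1, 2, \ldots, v(1) - 2, v(1))$ applied componentwise, combined with the fact that $\phi_{1, 1}(1) = v(1) - 1$ lies in $\pi_{v(1) - 1}(\phi_{1, 1})$, pins down $\pi_{v(1) - 1}(\phi_{1, 1}) = \{1, 2, \ldots, v(1) - 1\}$. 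Hence the first $v(1) - 1$ positions of $\phi_{1, 1}$ exhaust $\{1, \ldots, v(1) - 1\}$, so $\phi_{1, 1}(n) \geq v(1) > v(n)$, contradicting $\phi_{1, 1}(n) < v(n)$.

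For part (2), the same framework applied to shape $m\alpha_0$ gives rows $\tau_{1, l} = \{a_l\}$ and $\tau_{(n-1), l} = \{1, \ldots, n\} \setminus \{b_l\}$; the condition that each integer appears $m$ times yields the multiset equality $\{a_1, \ldots, a_m\} = \{b_1, \ldots, b_m\}$, while the admissible chain projects to the monotonicity $a_1 \geq \cdots \geq a_m$ and $b_1 \leq \cdots \leq b_m$. The Young tableau bounds give $a_l \leq v(1)$ and $b_l \geq v(n)$ for all $l$. If $a_m = v(n)$, multiset equality with monotonicity would force $b_1 = v(n)$, and the Bruhat argument of part (1) applied to $\phi_{1, m}$ (which satisfies $\phi_{1, m}(1) = v(n)$) would give $\phi_{1, m}(n) \geq v(1)$, contradicting the chain constraint $\phi_{1, m}(n) \leq b_1 = v(n)$. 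Hence $a_m = v(1)$; monotonicity with $a_l \leq v(1)$ then forces $a_l = v(1)$ for all $l$, and multiset equality gives $b_l = v(1)$ for all $l$. This yields $p_\Lambda = (p_{\tau_{1, 1}} p_{\tau_{(n-1), 1}})^m$.
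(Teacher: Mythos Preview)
Your argument is correct and follows precisely the approach the paper indicates: the paper's own proof is the single sentence ``Using the definition of standard monomials on Schubert varieties and Lemma~\ref{6.2} the proof follows,'' and you have carried out exactly that computation in full. Your explicit one-line form for $v$, the reduction via Lemma~\ref{6.2} to $a\in\{v(1)-1,v(1)\}$, and the Bruhat-order contradiction ruling out $a=v(1)-1$ are all sound; the paper's worked example for $SL(7)$ illustrates the same mechanism you have made general.

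One small expository point in part~(2): when you pass from ``$a_m\neq v(n)$'' to ``$a_m=v(1)$'' you are implicitly using that $a_m\in\{v(n),v(1)\}$. This follows because multiset equality gives $a_m=\min_l a_l=\min_l b_l=b_1\geq v(n)$, while $a_m\leq v(1)=v(n)+1$; you have stated both bounds separately, so it would be worth one clause to combine them explicitly.
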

\begin{proof} Using the definition of standard monomials on Schubert varieties and Lemma \ref{6.2} the proof follows. 
\end{proof}

Before going to the proof of the main results in this section, we see an example  
that also serves as a guiding path to prove the main results. 

\subsubsection{Example}
Let  $G=SL(7, \mathbb C)$ and $i=3$ in Theorem \ref{santosh}. Then 
$v=s_4s_5s_6s_3s_2s_1$ and in one line notation, $v$ is represented by $(5,1,2,3,6,7,4)$. 
We have a Young tableau $\Lambda$ given by \begin{linenomath*}
\[\Lambda = \begin{ytableau}
 1 & 2 & 3 & 4 & 6& 7 \\
  5
\end{ytableau}
\]\end{linenomath*}
Thus, we have $\tau_{n-1, 1}= (1,2,3,4,6,7)$ and $\tau_{1,1}=(5)$. We take $\phi_{n-1,1}=(1,2,3,4,6,7,5)$ and $\phi_{1,1}=(5,1,2,3,6,7,4)$. Then it follows
$\pi_{i}(\phi_{i,1})=\tau_{i,1}$ for $i=1, n-1$. 
Therefore, we have a sequence \begin{linenomath*} $$\phi_{n-1,1}\leq \phi_{1,1}\leq w$$\end{linenomath*} such that $\Lambda$ is a standard tableau on $X(v)$.
Also, note that  if $l<5$ then, any tableau with $\tau_{1 1}=l$ is not a standard tableau on $X(v)$ as there is no sequence satisfying the conditions stated above. 
Hence, we conclude that the equality $dim(H^0(X(v), \mathcal L_{\alpha_0})^T)=1$ holds. 

Now we extend $v$ to $w_0$ by multiplying simple reflections step by step and observe how the dimension changes:
Note that $s_2v=(5,2,1,3,6,7,4)$. Clearly, \begin{linenomath*}
$$\Lambda= \begin{ytableau}
 1 & 2 & 3 & 4 & 6& 7 \\
  5
\end{ytableau}$$ \end{linenomath*}is a standard Young tableau on $X(s_2v)$. Now we show that there are no other standard Young tableau on $X(s_2v)$.
Let  \begin{linenomath*}$$\Lambda'= \begin{ytableau}
 1 & 2 & 3 & 5 & 6& 7 \\
  4
\end{ytableau}.$$ \end{linenomath*}Then we must have $$\phi_{n-1, 1}=(1,2,3,5,6,7,4)\leq \phi_{1,1}=(4,...,)\leq s_2v=(5,2,1,3,6,7,4).$$ 
Since  $\phi_{1,1}\leq s_2v$ we have $\phi_{1,1}=(4,2,1,3,\ldots)$. This implies  \begin{linenomath*}
$$\phi_{1,1}\ngeq \phi_{n-1,1},~\mbox{ as} ~(4,2,1,3)\uparrow \ngeq (1,2,3,5)\uparrow$$\end{linenomath*}
(here $\uparrow$ denote the increasing order). 
Hence, there is no such $\phi_{1,1}$ and so $\Lambda'$ is not standard on $X(s_2v)$. Similarly, we can see that 
any tableau \begin{linenomath*}$$\Lambda'=\begin{ytableau}
 1 &  &  &  & &  \\
  l
\end{ytableau},$$\end{linenomath*} with $l<5$ is not standard on $X(s_2v)$.
Hence, it is clear $dim(H^0(X(s_2v), \mathcal L_{\alpha_0})^T)=1$.

By the similar arguments as above, for $w=s_3s_2w, s_4s_3s_2w$ and $s_5s_4s_3s_2w$, we see that $dim(H^0(X(w), \mathcal L_{\alpha_0})^T)=1$.

Let $w=s_6s_5s_4s_3s_2v=(5,2,3,6,7,4,1)$. In this case we show that any Young tableau 
\begin{linenomath*} $$\Lambda= \begin{ytableau}
 1 &  &  &  & &  \\
  l
\end{ytableau}$$ \end{linenomath*} with $2\leq l \leq 5$ is standard on $X(w)$. We now give the admissible sequence $\phi_{n-1,1}\leq \phi_{1,1}\leq w$ for each $l$.
\begin{itemize}
 \item 
For $l=4$; $\phi_{n-1,1}=(1,2,3,5,6,7,4)$ and $\phi_{1,1}=(4,2,3,6,7,5,1)$.
\item For $l=3$; $\phi_{n-1,1}=(1,2,4,5,6,7,3)$ and $\phi_{1,1}=(3,2,5,6,7,4,1)$.

\item For $l=2$; $\phi_{n-1,1}=(1,3,4,5,6,7,2)$ and $\phi_{1,1}=(2,5,3,6,7,4,1)$.
\end{itemize}
Hence, we conclude that $dim(H^0(X(w), \mathcal L_{\alpha_0})^T)=4$.

By the similar arguments as above, for $w$ of the form: 
\begin{itemize}
 \item 
$s_2s_6s_5s_4s_3s_2v$, 
\item $s_3s_2s_6s_5s_4s_3s_2v$,
\item $s_4s_3s_2s_6s_5s_4s_3s_2v$, 
\item $s_5s_4s_3s_2s_6s_5s_4s_3s_2v$,
\item $s_2s_5s_4s_3s_2s_6s_5s_4s_3s_2v$,
\item $s_2s_3s_5s_4s_3s_2s_6s_5s_4s_3s_2v$ and 
\item $s_4s_3s_2s_5s_4s_3s_2s_6s_5s_4s_3s_2v=(5,6,7,4,3,2,1)$,

\end{itemize}
we can easily show that $dim(H^0(X(w), \mathcal L_{\alpha_0})^T)=4$. 

Let $w'=(5,6,7,4,3,2,1)$ and $w=s_2w'$ or $w=s_3s_2w'$. Then we have for $l=6$, the admissible sequence on $X(w)$ is given by 
$\phi_{n-1,1}=(1,2,3,4,5,7,6)$ and $\phi_{1,1}=(6,5,7,4,3,2,1)$.
It follows that $dim(H^0(X(w), \mathcal L_{\alpha_0})^T)=5$. 

Let $w=s_2s_3s_2w'=(7,6,5,4,3,2,1)=w_0$. For $l=7$, the admissible sequence is given by \begin{linenomath*}
$$\phi_{n-1,1}=(1,2,3,4,5,6,7) ~\mbox{and}~ \phi_{1,1}=(7,6,5,4,3,2,1).$$ \end{linenomath*}
This implies, we have  $dim(H^0(X(w), \mathcal L_{\alpha_0})^T)=6$.

\subsection{} We follow the similar strategy as in the above example to prove the results in this section.


In the following, we denote by "{\bf Minimal $v$}" for a minimal $v$ such that $X(v)^{ss}_{T}(\mathcal L_{\alpha_0})\neq \emptyset$ as in Theorem \ref{santosh}.  For $w \geq v$, we extend $v$ to $w$ by multiplying simple reflections. We say such $w$ is an {\bf extension} of $v$ and we denote it by "{\bf Extension $w$}".  

\subsubsection{}\underline{{\bf Case-A:}} We take $v$ from Theorem \ref{santosh}(1). Let $1\leq i\leq n-1$. 
\begin{enumerate}
 \item  ({\bf Minimal $v$}) Define  $v=v_{0, i}:=s_{i+1}\cdots s_{n-1}s_i\cdots s_1$.
 \item ({\bf Extension $w$}) Define $v_{k, j}:= \begin{cases} s_j\cdots s_2 v_{k-1, n-(k-1)}, ~&\text{if}~ 1\leq k\leq i ~ \text{and}~ 2\leq j\leq n-k,\\
                   v_{i, n-i} ~~\text{if}~ k=i+1\\
                   s_j\cdots s_1 v_{k-1, n-(k-1)} ~&\text{if}~ k>i+1 ~\text{and}~ 1\leq j\leq n-k.	
                  \end{cases}
 	 $
\end{enumerate}
\underline{\bf Case-A.1:} $i\neq n-1, n-2$.

\begin{lemma} \label{generators1}
Let $k=1$ and $2\leq j\leq n-2$. Then, 
\begin{enumerate}
 \item We have $dim(H^0(X(v_{1, j}), \mathcal L_{\alpha_0})^T=1$.
 \item The section $P_{\Lambda}\in H^0(X(v_{1, j}), \mathcal L_{\alpha_0})$ is $T$-invariant if and only if 
 \begin{equation}\label{eqinva}
\tau_{1,1}=i+2 ~\text{and}~\tau_{n-1, 1}=1, \ldots ,i+1, i+3, \ldots, n.     
 \end{equation}
\end{enumerate}
\end{lemma}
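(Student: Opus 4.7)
The plan is to translate $T$-invariance and standardness for a tableau of shape $\alpha_0=\omega_1+\omega_{n-1}$ into a single Bruhat-order condition on one permutation, and then rule out every candidate other than the one claimed by a single comparison at the $(n-1)$-st truncation $\pi_{n-1}$.

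By Lemma~\ref{6.2}, a $T$-invariant section of $\mathcal L_{\alpha_0}$ corresponds to a tableau in which every integer of $\{1,\dots,n\}$ appears the same number of times. Since the shape $\alpha_0$ has only $n$ boxes (one in the row of length $1$ and $n-1$ in the row of length $n-1$), each value must appear exactly once, so the $T$-invariant tableaux are indexed by a single $k\in\{1,\dots,n\}$ via $\tau_{1,1}=k$ and $\tau_{n-1,1}=\{1,\dots,n\}\setminus\{k\}$. Such a tableau is standard on $X(v_{1,j})$ precisely when there is an admissible pair $\phi_{n-1,1}\le\phi_{1,1}\le v_{1,j}$ with $\phi_{1,1}(1)=k$ and $\phi_{n-1,1}(n)=k$; the Bruhat-minimum permutation ending in $k$ being $\sigma_k:=(1,\dots,k-1,k+1,\dots,n,k)$, this reduces (1) and (2) to showing that there is a $\phi\in S_n$ with $\phi(1)=k$ and $\sigma_k\le \phi\le v_{1,j}$ if and only if $k=i+2$.

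From the recursion I would first extract the one-line notation of $v_{1,j}$ in the two sub-cases $2\le j\le i$ and $i+1\le j\le n-2$, and read off the two features that are all I will use: $v_{1,j}(1)=i+2$ and $v_{1,j}(n)=i+1$ (the latter because $s_2,\dots,s_j$ never act on position $n$). These give
\[
\pi_{n-1}(v_{1,j})=(1,2,\dots,i,i+2,\dots,n),
\]
which I would compare with $\pi_{n-1}(\sigma_k)=(1,\dots,k-1,k+1,\dots,n)$: for $1\le k\le i$ the $k$-th component on the left is $k+1$, strictly greater than $k$ on the right, so already $\sigma_k\not\le v_{1,j}$ and the interval is empty; for $k>i+2$ the condition $\phi\le v_{1,j}$ alone fails at $\pi_1$, since $k=\phi(1)>i+2=v_{1,j}(1)$.

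It remains to eliminate $k=i+1$ and to confirm $k=i+2$. When $k=i+1$ the two bounding $\pi_{n-1}$'s coincide, so any admissible $\phi$ must have $\pi_{n-1}(\phi)=(1,\dots,i,i+2,\dots,n)$, forcing $\phi(n)=i+1$ and contradicting the imposed $\phi(1)=i+1$. When $k=i+2$ I would take $\phi=v_{1,j}$ itself: the equalities $\phi(1)=i+2$ and $\phi\le v_{1,j}$ are immediate, and $\sigma_{i+2}\le v_{1,j}$ is a routine componentwise verification of the $\pi_{i'}$'s for all $i'$, the $(n-1)$-st case being a special instance of the comparison above. This yields the unique $T$-invariant standard monomial on $X(v_{1,j})$, namely the one described in \eqref{eqinva}, proving both parts. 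The only real bookkeeping is the explicit form of $v_{1,j}$ in the two sub-cases; once that is written down, the decisive observation that $v_{1,j}(n)$ is always $i+1$ collapses the entire analysis into one inequality at $\pi_{n-1}$.
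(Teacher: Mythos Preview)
Your proposal is correct and follows essentially the same path as the paper's proof: both reduce $T$-invariance to choosing a single $k$ with $\tau_{1,1}=\{k\}$, take $\phi_{n-1,1}=\sigma_k=(1,\dots,k-1,k+1,\dots,n,k)$, and then argue that no admissible $\phi_{1,1}$ with first entry $k$ can sit between $\sigma_k$ and $v_{1,j}$ unless $k=i+2$.

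The only organizational difference is that the paper splits on whether $j\le i$ or $j>i$ and argues position-by-position in each sub-case, whereas you extract the single fact $v_{1,j}(n)=i+1$ (equivalently $\pi_{n-1}(v_{1,j})=(1,\dots,i,i+2,\dots,n)$) and run one uniform $\pi_{n-1}$-comparison that simultaneously kills $k\le i$, and then dispose of $k=i+1$ by the squeeze argument on $\pi_{n-1}$. This is a mild streamlining of the same idea, not a different method. For the surviving case $k=i+2$, the paper appeals to surjectivity of restriction from $X(v_{1,j})$ to $X(v_{0,i})$ together with Lemma~\ref{6.4}; you instead verify $\sigma_{i+2}\le v_{1,j}$ directly. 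Either is fine, though you could shorten your last step by quoting Lemma~\ref{6.4} and the inclusion $X(v_{0,i})\subset X(v_{1,j})$ rather than promising a ``routine componentwise verification''.
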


\begin{proof}
 Since $X(v_{0, i})\subset X(v_{1, j})$ and $T$ is reductive, the restriction map  \begin{linenomath*}$$H^0(X(v_{1, j}), \mathcal L_{\alpha_0})^T\longrightarrow H^0(X(v_{0, i}), \mathcal L_{\alpha_0})^T$$ \end{linenomath*}
 is surjective. It follows that $dim(H^0(X(v_{1, j}), \mathcal L_{\alpha_0})^T\geq 1$.
 Let $\Lambda$ be a tableau with $\tau_{1,1}$ and $\tau_{n-1, 1}$ as in (\ref{eqinva}). Then, the section $p_{\Lambda}\in H^0(X(v_{1, j}), \mathcal L_{\alpha_0})$ is $T$-invariant.
 
 To prove the lemma it is enough to show that $\Lambda$ as above is the only possibility. 
 We have $$v_{1,j}=(i+2, 2, 3, \ldots, j, 1, j+1, \ldots, n, i+1).$$
  Let $\tau_{1,1}=l$ and $l<i+2$ (as $\Lambda$ to be a Young tableau on $X(v_{1,j})$). 
  Note that $$\tau_{n-1,1}=(1,2,\ldots,l-1,l+1,\ldots, n).$$ We need to construct a sequence 
 \begin{equation}\label{phisequence}
  v_{1,j}\geq \phi_{1,1}\geq \phi_{n-1,1}
 \end{equation}
 such that $\pi_{n-1}(\phi_{n-1,1})=\tau_{n-1,1}$ and $\pi_{1}(\phi_{1,1})=\tau_{1,1}$.
 Then, we obtain \begin{linenomath*}
 $$\phi_{n-1,1}=(1,2,\ldots, l-1,l+1, \ldots, n, l).$$\end{linenomath*} Let $\phi_{n-1,1}=(y_1, \ldots, y_n)$.  Now we have to produce $\phi_{1,1}$ which fits in the 
 sequence (\ref{phisequence}).
Consider $\phi_{1,1}=(x_1, x_2,\ldots, x_n)$. Since $\pi_1(\phi_{1,1})=\tau_{1,1}$, it follows that $x_{1}=l$.

\noindent\underline{\bf Case-1:} Assume that $j\leq i$. By the sequence \ref{phisequence}, we observe that for $1<l\leq i+1$, we have $x_{l}\leq l$. Further, note that $y_{l}=l+1$.

\noindent\underline{\bf Case-2:} Assume that $j>i$. As $x_{j+1}=1$, 
we can see that \begin{linenomath*} $$(x_1,\ldots, x_{j+1})\uparrow \ngeq (y_1, \ldots, y_{j+1})\uparrow$$ \end{linenomath*}(the ordering fails at $(i+1)^{th}$ position); here $\uparrow$ indicates that $x_i$'s are arranged in increasing order.
Hence, we conclude that there is no $\phi_{1,1}$ which fits in the sequence (\ref{phisequence}).
Thus, it follows $dim(H^0(X(v_{1, j}), \mathcal L_{\alpha_0})^T=1$ and this proves the lemma.
 \end{proof}

\begin{lemma}\label{generators2}
 For $k=1$ and $j=n-1$; or for $2\leq k\leq i$ and $2\leq j\leq n-k$; 
 \begin{enumerate}
 \item We have $dim(H^0(X(v_{k, j}), \mathcal L_{\alpha_0})^T=i+1$.
\item The section $P_{\Lambda}\in H^0(X(v_{k, j}), \mathcal L_{\alpha_0})$ is $T$-invariant if and only if 
$$\tau_{1,1}=l~\text{and}~\tau_{n-1, 1}=1, \ldots l-1, l+1, \ldots, n~\text{for}~l\in\{2, \ldots , i+2\}.$$
\end{enumerate}
\end{lemma}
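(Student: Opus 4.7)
The plan is to follow the strategy of Lemma \ref{generators1} and the worked example preceding this lemma: reduce the computation of $\dim H^0(X(v_{k,j}),\mathcal{L}_{\alpha_0})^T$ to counting admissible values of a single entry $l$, and then match an upper bound by Bruhat constraints with a lower bound by an explicit construction. By Lemma \ref{6.2}, a $T$-invariant section of $\mathcal{L}_{\alpha_0}$ corresponds to a Young tableau of shape $\omega_1+\omega_{n-1}$ in which each of $1,\ldots,n$ appears exactly once; equivalently $\tau_{1,1}=\{l\}$ and $\tau_{n-1,1}=\{1,\ldots,n\}\setminus\{l\}$ for a unique $l$. Hence $\dim H^0(X(v_{k,j}),\mathcal{L}_{\alpha_0})^T$ equals the number of $l\in\{1,\ldots,n\}$ for which an admissible sequence $v_{k,j}\geq \phi_{1,1}\geq \phi_{n-1,1}$ with $\pi_1(\phi_{1,1})=\{l\}$ and $\pi_{n-1}(\phi_{n-1,1})=\{1,\ldots,n\}\setminus\{l\}$ exists.

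The upper bound $\dim\leq i+1$ is established by two Bruhat order observations. The recursion $v_{k,j}=s_j\cdots s_2\,v_{k-1,n-(k-1)}$ in the stated range uses only the simple reflections $s_2,\ldots,s_{n-1}$, none of which change position $1$; an induction on $k$ then gives $v_{k,j}(1)=v_{0,i}(1)=i+2$. The Bruhat comparison $\phi_{1,1}\leq v_{k,j}$ at the first partial sum forces $l=\phi_{1,1}(1)\leq i+2$. Moreover $l=1$ is impossible: $\phi_{1,1}(1)=1$ would force $\phi_{n-1,1}(1)\leq 1$ via the comparison $\phi_{n-1,1}\leq\phi_{1,1}$, contradicting $1\notin\pi_{n-1}(\phi_{n-1,1})$. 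Thus $l\in\{2,\ldots,i+2\}$, giving at most $i+1$ possibilities.

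For the matching lower bound I would exhibit, for each $l\in\{2,\ldots,i+2\}$, an explicit admissible pair, extrapolating the list worked out in the example for $n=7,\,i=3,\,w=v_{1,6}$. Take $\phi_{n-1,1}^{(l)}$ to be the permutation placing $\{1,\ldots,l-1,l+1,\ldots,n\}$ in positions $1,\ldots,n-1$ in increasing order and $l$ in position $n$, and take $\phi_{1,1}^{(l)}$ to be obtained from $v_{k,j}$ by transposing the entry $i+2$ at position $1$ with the entry $l$ at its (unique) later position $m$. Since $i+2>l$ and $1<m$, this transposition descends in Bruhat order, so $\phi_{1,1}^{(l)}\leq v_{k,j}$, and the second inequality $\phi_{n-1,1}^{(l)}\leq\phi_{1,1}^{(l)}$ reduces to a componentwise comparison on partial sums. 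The main obstacle is writing the one-line notation of $v_{k,j}$ in closed form uniformly in $(k,j)$, since the recursion pushes small entries to the right in a manner that depends on both indices; I expect to split the verification into the two subcases $k=1,\,j=n-1$ and $2\leq k\leq i,\,2\leq j\leq n-k$, in each of which the second Bruhat inequality reduces to inspecting a handful of critical partial sums.
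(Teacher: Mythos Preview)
Your approach coincides with the paper's. The paper also splits into the same two subcases; for $k=1$, $j=n-1$ it writes out the admissible pairs $\phi_{1,1},\phi_{n-1,1}$ for $2\le l\le i$ and for $l=i+1$ explicitly, and these are exactly the permutations produced by your swap rule applied to $v_{1,n-1}=(i+2,2,3,\ldots,i,i+3,\ldots,n,i+1,1)$. For the upper bound the paper simply remarks that $l>i+2$ cannot give a Young tableau on $X(v_{1,n-1})$ and refers back to the argument of Lemma~\ref{generators1}, whereas you isolate the two Bruhat constraints $l\le v_{k,j}(1)=i+2$ and $l\neq 1$ more explicitly; both amount to the same thing. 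For $2\le k\le i$ the paper says only ``the proof is same as in Case~1,'' which is precisely the case-splitting you anticipate for verifying $\phi_{n-1,1}^{(l)}\le\phi_{1,1}^{(l)}$ once the one-line notation of $v_{k,j}$ is written down.
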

\begin{proof}
\noindent\underline{\bf Case-1:} $k=1$ and $j=n-1$.

 First we give the sequences $v_{1, n-1}\geq \phi_{1,1}\geq \phi_{n-1, 1}$ as in (\ref{phisequence}).

For $2\leq l\leq i$, we take 
\begin{itemize}
 \item 
$\phi_{1,1}=(l, 2,3,\ldots, l-1, i+2, l+1, \ldots, i, i+3, \ldots, n, i+1, 1)$
 \item $\phi_{n-1,1}=(1,\ldots, l-1,l+1, \ldots, n, l)$ and 
\end{itemize}

and for $l=i+1$ we take
\begin{itemize}
 \item 
$\phi_{1,1}=(i+1, 2,3, \ldots, i, i+3, \ldots, n, i+2, 1)$
 \item $\phi_{n-1,1}=(1,\ldots, i,i+2, \ldots, n, i+1)$.
\end{itemize}

Clearly, $\tau_{1,1}=l$ with $l>i+2$ and $$\tau_{n-1,1}=(1,\ldots ,l-1, l+1, \ldots, n, l)$$
is not a Young tableau on $X(v_{1, n-1})$.
Now the proof is similar to that of Lemma \ref{generators1}. 

\noindent\underline{\bf Case-2:} For $2\leq k\leq i$ and $2\leq j\leq n-k$ the proof is same as in Case 1. This completes the proof of the lemma.
\end{proof}

\begin{lemma}\label{6.8} For $n-1\geq k>i+1$ and $1\leq j\leq n-k$; 
  
 \begin{enumerate}
 \item We have $dim(H^0(X(v_{k, j}), \mathcal L_{\alpha_0})^T=k$.
\item The section $p_{\Lambda}\in H^0(X(v_{k, j}), \mathcal L_{\alpha_0})$ is $T$-invariant 
if and only if 
$$\tau_{1,1}=l~\text{and}~\tau_{n-1, 1}=1, \ldots l-1, l+1, \ldots, n (~\text{for}~ l\in\{2, \ldots , k+1\}).$$  
\end{enumerate}

 \end{lemma}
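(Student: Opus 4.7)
The plan is to mirror the two--step strategy of Lemmas \ref{generators1} and \ref{generators2}: for each $l\in\{2,\ldots,k+1\}$ exhibit an explicit admissible sequence producing a $T$-invariant section with $\tau_{1,1}=l$, and rule out all other values of $l$ by a Bruhat--order obstruction. The entire argument proceeds by induction on $k$, starting from the base case $k=i+2$, and within each $k$ by secondary induction on $j$.

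First, I would establish the key structural fact that $v_{k,j}(1)=k+1$ whenever $k\ge i+1$ and $1\le j\le n-k$. Unwinding the recursion $v_{k,j}=s_j\cdots s_1\,v_{k-1,n-(k-1)}$ (each left multiplication by $s_\ell$ permuting positions $\ell$ and $\ell+1$, as in the worked example of Section~5), this follows by induction on $k$ together with the auxiliary invariant $v_{k,n-k}(2)=k+2$. With these in hand, any tableau with $\tau_{1,1}=l>k+1$ is not standard on $X(v_{k,j})$: an admissible $\phi_{1,1}\le v_{k,j}$ must satisfy $\phi_{1,1}(1)\le v_{k,j}(1)=k+1<l$, contradicting $\pi_1(\phi_{1,1})=l$. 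The value $l=1$ is ruled out exactly as in Lemma \ref{generators1}, since $\pi_1(\phi_{1,1})=\{1\}<\pi_1(\phi_{n-1,1})=\{2\}$ is incompatible with $\phi_{1,1}\ge \phi_{n-1,1}$.

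Next, for each $l\in\{2,\ldots,k+1\}$ I would exhibit an admissible sequence $v_{k,j}\ge \phi_{1,1}\ge \phi_{n-1,1}$ with $\pi_1(\phi_{1,1})=l$ and $\pi_{n-1}(\phi_{n-1,1})=(1,\ldots,l-1,l+1,\ldots,n)$. For $l\in\{2,\ldots,k\}$ the admissible sequences already constructed on $X(v_{k-1,n-(k-1)})$ (by the inductive hypothesis, or by Lemma \ref{generators2} in the base case $k=i+2$) lift unchanged, since $X(v_{k-1,n-(k-1)})\subset X(v_{k,j})$ and the corresponding restriction map of global sections is $T$-equivariantly surjective by reductivity of $T$. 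For the genuinely new value $l=k+1$ the natural choice is
\[
\phi_{1,1}=v_{k,j},\qquad \phi_{n-1,1}=(1,2,\ldots,k,k+2,\ldots,n,k+1),
\]
and the Bruhat comparison $\phi_{1,1}\ge\phi_{n-1,1}$ is checked using the subset description of Bruhat order, together with $v_{k,j}(1)=k+1$ and the fact that the remaining entries of $v_{k,j}$ are a permutation of $\{1,\ldots,n\}\setminus\{k+1\}$.

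Combining these two halves yields exactly $k$ standard $T$-invariant sections, indexed by $l\in\{2,\ldots,k+1\}$; the $T$-invariance itself follows from Lemma \ref{6.2} because in each case all entries $1,\ldots,n$ appear in $\Lambda$ the same number of times. The main technical obstacle is the inductive bookkeeping needed to pin down $v_{k,j}$ in one--line notation well enough to justify both $v_{k,j}(1)=k+1$ and the Bruhat inequality $v_{k,j}\ge (1,\ldots,k,k+2,\ldots,n,k+1)$; the $G=SL(7,\mathbb C)$ example of Section~5 (with $i=3$) provides an indispensable sanity check and in fact exhibits all the essential patterns of the general argument.
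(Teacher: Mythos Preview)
Your approach is correct and matches the paper's, whose own proof of this lemma is the single line ``Proof is similar to the proofs of Lemma~\ref{generators1} and~\ref{generators2}. Further, note that $v_{n-1,1}=w_0$.'' Your proposal is therefore considerably more detailed than what the paper provides, while following exactly the same template: bound $l$ from above via $v_{k,j}(1)$, rule out $l=1$, lift the previously constructed admissible sequences for $l\in\{2,\ldots,k\}$, and exhibit a new one for $l=k+1$.

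One small sharpening is worth noting. The facts you cite for the Bruhat comparison $\phi_{1,1}=v_{k,j}\ge\phi_{n-1,1}=(1,\ldots,k,k+2,\ldots,n,k+1)$---namely $v_{k,j}(1)=k+1$ and that the remaining entries permute $\{1,\ldots,n\}\setminus\{k+1\}$---are not by themselves sufficient (e.g.\ $(k+1,1,2,\ldots,k,k+2,\ldots,n)$ satisfies both but fails the inequality). You do flag this as part of the ``inductive bookkeeping,'' which is fair. The cleanest way to close it is to observe that, since $\phi_{n-1,1}$ is a minimal coset representative for $P_{n-1}$, one has $u\ge\phi_{n-1,1}$ in $S_n$ if and only if $u(n)\le k+1$; and the recursion (which after the step $k=1$, $j=n-1$ never again touches position $n$) gives $v_{k,j}(n)=1$ for all $k\ge 1$. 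This replaces the vaguer ``subset description'' check with a one-line verification.
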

\begin{proof}
 Proof is similar to the proofs of Lemma \ref{generators1} and \ref{generators2}. Further, note that $v_{n-1,1}=w_0$.
\end{proof}

\underline{\bf Case-A.2:} $i=n-1, n-2$.
In these cases, we have $w=s_{n-1}\cdots s_1$.
\begin{enumerate}
 \item ({\bf Minimal $v$}) Define $u_{0, n}:=s_{n-1} \cdots s_1$.
 \item ({\bf Extension $w$}) Define $u_{k, j}:=  s_j\cdots s_2 u_{k-1, n-(k-1)},$ if $1\leq k\leq n-2$ and $2\leq j\leq n-k$.
\end{enumerate}
\begin{lemma} \label{5.12}\

\begin{enumerate}
 \item 
 If $k=1$ and $2\leq j\leq n-2$, then $$dim(H^0(X(u_{1,j}), \mathcal L_{\alpha_0})^T)=1.$$
\item If $k=1$ and $j=n-1$; or if $k>1$, then $$dim(H^0(X(u_{k,j}), \mathcal L_{\alpha_0})^T)=n-1.$$ 
\end{enumerate}
 \end{lemma}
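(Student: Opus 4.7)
The plan is to mirror the strategy used in the proofs of Lemmas~\ref{generators1}--\ref{6.8} by explicitly listing $T$-invariant tableaux and testing their standardness on $X(u_{k,j})$. By Lemma~\ref{6.2} applied to the shape $\alpha_0 = \omega_1 + \omega_{n-1}$ (which has $n$ boxes), every $T$-invariant Young tableau takes the form $\tau_{1,1} = \{l\}$ and $\tau_{n-1,1} = \{1,\ldots,n\}\setminus\{l\}$ for some $l \in \{1,\ldots,n\}$; so the problem is to count the number of such $l$ for which this tableau is standard on $X(u_{k,j})$. Note that $l = 1$ is never standard, since any $\phi_{n-1,1}$ with $\phi_{n-1,1}(n) = 1$ has its sorted first entry $\geq 2$, which cannot be dominated by $\phi_{1,1}$ whose first entry is $1$; this gives the uniform upper bound $\dim \leq n-1$.

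I would next compute the one-line notation of $u_{k,j}$ explicitly. Starting from $u_{0,n} = (n,1,2,\ldots,n-1)$ and iterating the recursion, one finds
\[
u_{1,j} \;=\; (n,\,2,\,3,\,\ldots,\,j,\,1,\,j+1,\,\ldots,\,n-1) \qquad \text{for } 2 \leq j \leq n-1.
\]
In particular $\pi_{n-1}(u_{1,j}) = \{1,2,\ldots,n-2,n\}$ when $2 \leq j \leq n-2$, whereas $\pi_{n-1}(u_{1,n-1}) = \{2,3,\ldots,n\}$.

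For part (1), the Young tableau condition $\pi_{n-1}(u_{1,j}) \geq \tau_{n-1,1}$ (componentwise on sorted tuples) forces $l \in \{n-1,n\}$. The case $l = n$ is the unique $T$-invariant on $X(v)$ given by Lemma~\ref{6.4} and remains standard on $X(u_{1,j})$ since $u_{0,n} \leq u_{1,j}$. For $l = n-1$, I would rule out standardness: any $\phi_{1,1}$ with first entry $n-1$ satisfying $\phi_{1,1} \leq u_{1,j}$ has sorted first $n-1$ entries equal to $(1,2,\ldots,n-1)$, while every $\phi_{n-1,1}$ with $\pi_{n-1}(\phi_{n-1,1}) = \{1,\ldots,n-2,n\}$ has sorted first $n-1$ entries $(1,2,\ldots,n-2,n)$, and the needed dominance fails at the last coordinate since $n-1 < n$. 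Hence $\dim = 1$.

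For part (2), when $k=1,\, j=n-1$, every $l \in \{1,\ldots,n\}$ satisfies the Young tableau condition. For each $l \in \{2,\ldots,n\}$ I would exhibit an admissible pair
\[
\phi_{1,1} = (l,\,2,\,3,\,\ldots,\,l-1,\,l+1,\,\ldots,\,n,\,1), \qquad \phi_{n-1,1} = (1,\,2,\,\ldots,\,l-1,\,l+1,\,\ldots,\,n,\,l),
\]
and verify the inequalities $\phi_{1,1} \leq u_{1,n-1}$ and $\phi_{n-1,1} \leq \phi_{1,1}$ by a direct sorted-initial-segment comparison; combined with the upper bound this gives $\dim = n-1$. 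For $k \geq 2$, I would show by induction on the stage that each extension $u_{k,j} \mapsto u_{k,j+1}$ (and inter-stage $u_{k,n-k} \mapsto u_{k+1,2}$) is a genuine Bruhat cover, so that $u_{1,n-1} \leq u_{k,j}$ throughout the chain. Then the surjective restriction $H^0(X(u_{k,j}),\mathcal L_{\alpha_0})^T \twoheadrightarrow H^0(X(u_{1,n-1}),\mathcal L_{\alpha_0})^T$ yields $\dim \geq n-1$, matching the upper bound. The main technical hurdle is verifying that each step in the extension chain is length-increasing (which boils down to checking the relative order of two consecutive entries in the current one-line notation at every stage); the rest of the proof is a sequence of routine dominance comparisons on sorted initial segments.
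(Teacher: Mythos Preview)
Your proposal is correct and follows essentially the same approach as the paper, which simply states that the proof is similar to the $v_{k,j}$ case after noting $u_{0,n}=(n,1,2,\ldots,n-1)$; you have carried out precisely that parallel argument in detail. The only minor deviation is that for $k\geq 2$ you invoke the Bruhat chain $u_{1,n-1}\leq u_{k,j}$ together with surjectivity of restriction to get the lower bound, rather than rewriting admissible sequences at each stage, but this is exactly in the spirit of the paper's own deferrals in Lemma~\ref{generators2}.
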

\begin{proof}
  The proof is similar to the proof in the case of $v_{k,j}$ by noticing that $$u_{0,n}=(n, 1,2,\ldots, n-1).$$
\end{proof}

\subsubsection{} \underline{{\bf Case-B:}} We take $v$ from Theorem \ref{santosh}(2).

For $1\leq i\leq n-1$, we define $v=s_i \cdots s_1s_{i+1}\cdots s_{n-1}$.
\begin{enumerate}
 \item ({\bf Minimal $v$}) Define $w_{0, n}:=s_{i}\cdots s_1s_{i+1}\cdots s_{n-1}$.
 \item ({\bf Extension $w$}) Define $w_{k, j}:= \begin{cases} s_j\cdots s_2 w_{k-1, n-(k-1)}, ~&\text{if}~ 1\leq k\leq i-1 ~ \text{and}~ 2\leq j\leq n-k\\
                   w_{i-1, n-i+1} ~~\text{if}~ k=i\\
                   s_j\cdots s_1 w_{k-1, n-(k-1)} ~&\text{if}~ k>i ~\text{and}~ 1\leq j\leq n-k.	
                  \end{cases}
 	 $
\end{enumerate}
\begin{lemma}\label{5.13}\

 \begin{enumerate}
  \item If $k=1$ and $2\leq j\leq n-2$, then $$dim(H^0(X(w_{k, j}), \mathcal L_{\alpha_0})^T)=1.$$
  \item For $k=1$ and $j={n-1}$; or $2\leq k\leq i$ and $2\leq j\leq n-k$, we have $$dim(H^0(X(w_{k, j}), \mathcal L_{\alpha_0})^T)=i.$$
  \item For $n>k>i$ and $1\leq j \leq n-k$, we have $$dim(H^0(X(w_{k, j}), \mathcal L_{\alpha_0})^T)=k.$$
 \end{enumerate}
\end{lemma}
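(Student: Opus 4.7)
The plan mirrors the strategy used in Lemmas~\ref{generators1}, \ref{generators2} and \ref{6.8} of Case~A, adapted to the shifted form of the minimal element in Theorem~\ref{santosh}(2). First, I would write down the one-line notation of $v=w_{0,n}=s_i\cdots s_1 s_{i+1}\cdots s_{n-1}$: a direct computation gives $v(1)=i+1$, $v(j)=j-1$ for $2\leq j\leq i$, $v(j)=j+1$ for $i+1\leq j\leq n-1$, and $v(n)=i$. An analogous unwinding produces the explicit one-line form of every $w_{k,j}$. By Lemma~\ref{6.4}, a $T$-invariant section in $H^0(X(v),\mathcal{L}_{\alpha_0})$ is determined up to scalar by $\tau_{1,1}=\{v(1)\}=\{i+1\}$, and more generally every $T$-invariant section on $X(w_{k,j})$ is a scalar multiple of $p_{\tau_{1,1}} p_{\tau_{n-1,1}}$ with $\tau_{1,1}=\{l\}$ and $\tau_{n-1,1}=\{1,\dots,n\}\setminus\{l\}$ for some $l$. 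Hence the dimension in each case equals the number of admissible values of $l$.

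For each candidate $l$, I would then decide whether this tableau is standard on $X(w_{k,j})$, i.e.\ whether there is an admissible chain $w_{k,j}\geq \phi_{1,1}\geq \phi_{n-1,1}$ with $\pi_1(\phi_{1,1})=\{l\}$ and $\pi_{n-1}(\phi_{n-1,1})=\{1,\dots,n\}\setminus\{l\}$. The existence side proceeds by setting $\phi_{n-1,1}=(1,\dots,l-1,l+1,\dots,n,l)$ and constructing $\phi_{1,1}$ explicitly for each allowable $l$, following the templates of Lemmas~\ref{generators1} and \ref{generators2}. Part~(1) produces exactly the single admissible value $l=i+1$, giving dimension one; in part~(2), each added left-multiplication by $s_j\cdots s_2$ unlocks one new admissible value, so the allowable set grows to $\{2,\dots,i+1\}$, of size $i$; in part~(3), once we cross $k=i$ and start multiplying by $s_j\cdots s_1$, each further step adds one more admissible value, yielding the range $\{2,\dots,k+1\}$ and dimension $k$.

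The hard part will be the ruling-out step: showing that for $l$ outside the asserted range no admissible chain exists. Here I would adapt the positional argument used in Lemma~\ref{generators1}: the Bruhat bound $\phi_{1,1}\leq w_{k,j}$ forces the early entries of $\phi_{1,1}$ to lie in a restricted pool (determined by the ``descending'' portion of $w_{k,j}$), while $\phi_{1,1}\geq \phi_{n-1,1}$ demands that the increasingly-sorted prefixes $\pi_k(\phi_{1,1})\uparrow$ dominate $\pi_k(\phi_{n-1,1})\uparrow$ componentwise; for any disallowed $l$ these two constraints clash at a specific position, exactly as in the ``failure at the $(i+1)$-th position'' observation of Lemma~\ref{generators1}. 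Since the only structural difference between Case~A and Case~B is the shift $v(1)=i+1$ versus $v(1)=i+2$, the count in part~(2) drops by one (from $i+1$ to $i$) and the admissible range in part~(3) shifts accordingly, but the combinatorics is otherwise identical, so the explicit admissible sequences constructed in Lemmas~\ref{generators1}, \ref{generators2} and \ref{6.8} can be recycled with a uniform index shift.
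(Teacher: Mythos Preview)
Your proposal is correct and follows exactly the route the paper takes: its proof of Lemma~\ref{5.13} consists of the single sentence ``the proof follows by using similar arguments as in the case of $v_{k,j}$,'' and you have spelled out precisely those similar arguments, including the index shift $v(1)=i+1$ in place of $i+2$ that accounts for the drop from $i+1$ to $i$ in part~(2). One small wording issue: in part~(2) the admissible set does not grow one value at a time with each left-multiplication---it jumps from $\{i+1\}$ to $\{2,\dots,i+1\}$ all at once when $j$ reaches $n-1$, and then stays constant for $2\le k\le i$ (just as in Lemma~\ref{generators2} for Case~A)---but your final counts and method are right.
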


\begin{proof}
 The proof follows by using similar arguments as in the case of $v_{k,j}$.
\end{proof}

\subsection{Proof of the main results}
In this section, we prove the main theorem by using the results from Sec.\ref{coordinatering}.

\begin{proof}[Proof of Theorem \ref{projectivenormality}]

By \cite[Theorem 3.14, p.76]{GITmumford}, we have \begin{linenomath*} $$(X_P(w)^{ss}_{T}(\mathcal L_{\alpha_0})\sslash T, \mathcal M) = Proj(\bigoplus_{m\geq 0}H^0(X_P(w), \mathcal L_{m\alpha_0})^T).$$ \end{linenomath*}

As in Sec. \ref{coordinatering}, we reduce the proof to the case of Schubert varieties in the flag variety $G/B$. Since we assume that $(X_P(w)^{ss}_{T}(\mathcal L_{\alpha_0})$ is nonempty, there exists a minimal $v \in W$ such that $w \geq v$ and $(X_P(w)^{ss}_{T}(\mathcal L_{\alpha_0})$ is nonempty. 

We first assume that $w$ is of the form $w=v_{k,l}$ for some $k, l$ (see Sec. \ref{coordinatering} for the notation). 
Let $$R(v_{k, j}):=\bigoplus_{m\geq 0}H^0(X(v_{k, j}), \mathcal L_{m\alpha_0})^T ~\text{and}~ R_m:=H^0(X(v_{k, j}), \mathcal L_{m\alpha_0})^T.$$

\noindent \underline{{\bf Claim:}} For a given $m$, we show that every element of $R_m$ is a product of $m$ elements of $R_1$.

We prove the claim by taking cases on $k$ and $j$.

\noindent\underline{\bf Case 1:} $k=0$ or $k=1$ and $2\leq j\leq n-2$.

For $k=0$ we have $w=v_{0,n}$. By Lemmas \ref{6.4} and \ref{generators1}, we have $dim(R_1)=1$ and so any element $p_{\Lambda}\in R_m$ is of the form $f^m$ for some $f\in R_1$. So, in this case the claim follows.

\noindent\underline{\bf Case 2:} $k=1$ and $j=n-1$; or $2\leq k\leq i+1$ and $2\leq j\leq n-k$.

By Lemma \ref{generators2}, we have $dim(R_1)=i+1$.
For a section $p_{\Lambda}\in R_m$, the associated Young tableau $\Lambda$ is a of shape $m\alpha_0$.
Since the Schubert varieties are projectively normal, we get $$p_{\Lambda}=p_{\Lambda_1}p_{\Lambda_2}\cdots p_{\Lambda_m}, ~\text{where}~ \Lambda_i \in H^0(X(v_{k, j}), \mathcal L_{\alpha_0})$$ and 
each  $\Lambda_i$ is of shape $\alpha_0$.
Note that the top $m$ boxes in the first column of $\Lambda$ are filed with $1$'s  and the rest filled with non decreasing numbers from the set  $\{2, \ldots, i+2\}.$ 

Notice that the last $m$ boxes of the first column determine the Young tableau $\Lambda$ such that $p_{\Lambda}$ is $T$-invariant. So, we conclude that 
$p_{\Lambda_i}$'s are in $R_1$ and hence the claim follows.

\noindent\underline{\bf Case 3:} $i+2\leq k\leq n-1$ and $1\leq j\leq n-k$.
This case is similar to that of Case 2 by using Lemma \ref{6.8}, but the only difference here is that the last $m$ boxes of the first column are filled with non-decreasing numbers from $\{2,\ldots, k+1\}$.
 
 Thus, we proved the claim in the case $w=v_{k, j}$. The proof of the claim for $w=u_{k,j}$ or $w=w_{k,j}$ is similar. 
 
 By the claim, the graded ring $R(w)$ is generated by $R_1$. Therefore, using \cite[Ex. 5.14, II]{hartshorne}, we conclude that the polarized variety $(X_P(w)^{ss}_{T}(\mathcal L_{\alpha_0})\sslash T, \mathcal M)$ is projectively normal .
\end{proof}

\begin{proof}[Proof of Corollary  \ref{quotient}] Recall that $$R_m:=H^0(X(w), \mathcal L_{m\alpha_0})^T.$$ Let $R_1=span_{\mathbb C}\{f_1,f_2,\ldots ,f_t\}$ with
$f_i$'s are standard monomials on $X(w)$. 

We first claim that the set $\{f_1,f_2,\ldots ,f_t\}$ is algebraically independent.
To prove this, it is enough to show that the set of all monomials in $f_i$'s are linearly independent. The monomials $f_i$'s are standard 
and observe that any monomial of degree $m$  in $f_i$'s is a standard monomial on $X(w)$ in the original Pl\"ucker coordinates. 
Using the fact that the standard monomials of degree $m$ on $X(w)$ form a basis of $H^0(X(w), \mathcal L_{m\alpha_0})$, we conclude the claim.

By Theorem \ref{projectivenormality}, any element of $R_m$ is a product of elements of $R_1$ for all $m$. Hence, there is an isomorphism of $\mathbb C$- algebras:
\begin{linenomath*}$$\bigoplus_{m\geq 0}R_m \simeq \mathbb C [R_1].$$\end{linenomath*}
Notice that all the generators of $R_1$ are of same degree in the Pl\"ucker coordinates.
Thus, the proof follows. 
\end{proof}
\noindent We conclude this section by recovering Theorem 3.3 of \cite{CSS} for $G=SL(n, \mathbb C)$.
\begin{corollary}\

\begin{enumerate}
 \item  The polarized variety 
 $((G/P)^{ss}_{T}(\mathcal L_{\alpha_0})\sslash T, \mathcal M)$ is projectively normal.
 \item The quotient $(G/P)^{ss}_{T}(\mathcal L_{\alpha_0})\sslash T$ is a projective space of dimension $n-2$.
\end{enumerate}
\end{corollary}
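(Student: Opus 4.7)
The plan is to deduce this corollary directly from Theorem \ref{projectivenormality} and Corollary \ref{quotient} by specializing to $w = w_0$, where $w_0$ denotes the longest element of $W^P$. Since $X_P(w_0) = G/P$, the semistable locus $X_P(w_0)^{ss}_T(\mathcal{L}_{\alpha_0})$ coincides with $(G/P)^{ss}_T(\mathcal{L}_{\alpha_0})$, so part (1) follows immediately from Theorem \ref{projectivenormality}, and the fact that the quotient in (2) is a projective space follows immediately from Corollary \ref{quotient}. The only content left is to identify the dimension.

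For the dimension count, observe that the proof of Corollary \ref{quotient} shows that the quotient is $\mathrm{Proj}(\mathbb{C}[R_1])$, where $R_1 = H^0(G/P, \mathcal{L}_{\alpha_0})^T$ and all generators of $R_1$ are of the same degree in the Pl\"ucker coordinates. Hence the dimension of the projective space is $\dim_{\mathbb{C}} R_1 - 1$. Using the identification (\ref{tofullflag}) and its $T$-equivariant version at the level of sections, we reduce to computing $\dim H^0(X(w_0), \mathcal{L}_{\alpha_0})^T$ for $X(w_0) = G/B$. In the notation of Case-A from Sec.\ \ref{coordinatering}, $w_0$ arises as $v_{n-1,1}$ (any initial minimal $v$ extends through the chain up to $w_0$), so Lemma \ref{6.8} applies with $k = n-1$, $j = 1$ and yields $\dim R_1 = n-1$. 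Therefore the quotient is isomorphic to $\mathbb{P}^{n-2}$.

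As a sanity check, by the Borel--Weil theorem $H^0(G/B, \mathcal{L}_{\alpha_0})^{*}$ is the irreducible representation $V(\alpha_0)$, which for $SL(n,\mathbb{C})$ is the adjoint representation $\mathfrak{sl}_n$; the $T$-invariant part is the Cartan subalgebra, of dimension $n-1$, consistent with the count above. There is no real obstacle in the proof since the work has been done in Theorem \ref{projectivenormality} and Corollary \ref{quotient}; the only thing to be careful about is making the dimension bookkeeping precise, namely verifying that $v_{n-1,1}$ (from the recursive definition) indeed equals $w_0$ and that the standard-monomial analysis underlying Lemma \ref{6.8} extends unchanged to $w_0$.
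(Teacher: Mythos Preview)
Your proposal is correct and follows essentially the same approach as the paper: the paper's proof simply notes that $w_0=v_{n-1,1}$ and $\pi(X(w_0))=G/P$, then invokes Theorem \ref{projectivenormality} and Corollary \ref{quotient}. You spell out the dimension count via Lemma \ref{6.8} (which the paper leaves implicit in the identification $w_0=v_{n-1,1}$) and add a pleasant consistency check through the adjoint representation, but the core argument is identical.
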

\begin{proof}
As $w_0=v_{n-1,1}$ and $\pi(X(w_0))=G/P$, the result follows from Theorem \ref{projectivenormality} together with Corollary \ref{quotient}. 
\end{proof}

\section*{Acknowledgments}
The first author would like to thank Max Planck Institute for Mathematics  (Bonn) for the postdoctoral fellowship, and for providing very pleasant hospitality. We also thank the referees for their insightful comments and suggestions.


\begin{thebibliography}{11}

\bibitem{billey2000singular}
S.~Billey and V.~Lakshmibai, \emph{Singular loci of {S}chubert varieties}, Vol.
  182(Springer Science \& Business Media, 2000).

\bibitem{Sarjick}
S.~Bakshi, S.~S.~Kannan and K.~ V.~ Subrahmanyam, Smooth torus quotients of Schubert varieties in the Grassmannian, arXiv preprint arXiv:1912.08618 (2019).

\bibitem{CSS}
N.~Chary Bonala, S.~S.~Kannan, and S.~K.~Pattanayak, Torus invariants of the
  homogeneous coordinate ring of $G/B$-connection with Coxeter elements,
 \emph{Commun. Algebra} \textbf{42}(5) (2014) 1880--1895.

\bibitem{DoHu}
I.~V.~Dolgachev and Y.~Hu, Variation of geometric invariant theory
  quotients, \emph{Publ. Math. Inst. Hautes
  {\'E}tudes Sci.} \textbf{87} (1) (1998) 5--51.

\bibitem{Goldin}
R.~F.~Goldin, The cohomology ring of weight varieties and polygon spaces,
 \emph{Advances in Mathematics} \textbf{160}(2) (2001) 175--204.

\bibitem{hartshorne}
R.~Hartshorne, \emph{Algebraic geometry}, Vol.~52 (Springer Science \& Business
  Media, 2013).

\bibitem{Hu}
Y.~Hu, The geometry and topology of quotient varieties of torus actions,
\emph{Duke Math. J.} \textbf{68}(1) (1992) 151--184.

\bibitem{humphreys2012introduction}
J.~E.~Humphreys, \emph{Introduction to lie algebras and representation theory},
  Vol.~9 (Springer Science \& Business Media, 2012).

\bibitem{humphreys2012linear}
 J.~E.~Humphreys, \emph{Linear algebraic groups}, Vol.~21 (Springer Science \& Business  Media, 2012).

\bibitem{kannan1998torus}
S.~S.~Kannan, Torus quotients of homogeneous spaces, \emph{Proc. Indian Acad. Sci. (Math.Sci.)} \textbf{108} (1) (1998) 1--12.

\bibitem{kannan1999torus}
S.~S.~Kannan, Torus quotients of homogeneous spaces-II, \emph{Proc. Indian Acad. Sci. (Math.Sci.)} \textbf{109} (1) (1999) 23--39.

\bibitem{kannan2014git}
S.~S.~Kannan, GIT related problems of the flag variety for the action of a
  maximal torus, in \emph{Groups of Exceptional Type, Coxeter Groups and Related
  Geometries} (Springer, 2014), pp. 189--203.


\bibitem{Kannansantosh}
S.~S. Kannan and S.~K. Pattanayak, Torus quotients of homogeneous
  spaces-minimal dimensional Schubert varieties admitting semistable
  points \emph{Proc. Indian Acad. Sci. (Math.Sci.)} \textbf{119}(4) (2009)
  469--485.

\bibitem{kannan20091torus}
S.~S.~Kannan and S.~Pranab, Torus quotients of homogeneous spaces of the
  general linear group and the standard representation of certain symmetric
  groups, \emph{Proc. Indian Acad. Sci. (Math.Sci.)} \textbf{119}(1) (2009) 81--100.

\bibitem{kannan2018torus}
S.~S.~Kannan, K.~Paramasamy, S.K. Pattanayak, and S.~Upadhyay, Torus
  quotients of Richardson varieties, \emph{Commun.  Algebra} \textbf{46}(1)
  (2018) 254--261.

\bibitem{Knu}
A.~I. Knutson, Weight varieties, Ph.D. thesis,
  Massachusetts Institute of Technology(1996).
  
  \bibitem{Kumar}
  S.~ Kumar, Descent of line bundles to GIT quotients of flag varieties by maximal torus, \emph{Transform. Groups}, \textbf{13}(3-4) (2008) 757--771.


\bibitem{lakshmibaigrassmannian}
V.~Lakshmibai and J.~Brown, The Grassmannian variety, Geometric and
  Representation-Theoretic Aspects, \emph{Developments in Mathematics}. Vol.~22
 (Springer, New York, 2015).
  
    \bibitem{LMS}V.~Lakshmibai, C.~Musili and C.~S.~ Seshadri, Cohomology of line bundles on $G/B$, \emph{Ann. sci. \'{E}c. Norm. Sup\'{e}r}. \textbf{7}(1) (1974) 89--137.


\bibitem{lakshmibai1978geometry}
V.~Lakshmibai, C.~Musili, and C.~S.~Seshadri, Geometry of $G/P$-III,  standard monomial thoery for quasi-minuscule $P$,  \emph{Proc. Indian
  Acad. Sci. (Math. Sci.)}, \textbf{88} (1979) 93--177.
  

\bibitem{littelmann}
P.~Littelmann, A generalization of the Littlewood-Richardson rule,  \emph{J. Algebra} \textbf{130}(2) (1990) 328--368.


\bibitem{manivel2001symmetric}
L.~Manivel, \emph{Symmetric functions, {S}chubert polynomials, and degeneracy
  loci}, No.~3(American Mathematical Society, 2001).
  
 \bibitem{mare} 
A.~L.~Mare, On some symplectic quotients of Schubert varieties, \emph{Beitr. Algebra Geom}. \textbf{51} (2010) 9--30. 

\bibitem{GITmumford}
D.~Mumford, J.~Fogarty, and F.~C. Kirwan, \emph{Geometric invariant theory},
  Vol.~34 (Springer Science \& Business Media, 1994).

\bibitem{newstead}
P.~E. Newstead, \emph{Introduction to moduli problems and orbit spaces}, Vol.~51 (Tata Institute of Fundamental Research lecture notes, 1978).

\bibitem{pattanayak2014minimal}
S.K. Pattanayak, Minimal schubert varieties admitting semistable points
  for exceptional cases, \emph{Commun. Algebra} \textbf{42}(9) (2014) 3811--3822.

\bibitem{perrin2007small}
N.~Perrin, Small resolutions of minuscule Schubert varieties, \emph{Compos. Math.} \textbf{143}(5) (2007) 1255--1312.

\bibitem{PERRIN2009505}
N.~Perrin, The Gorenstein locus of minuscule Schubert varieties, \emph{Adv. Math.} \textbf{220}(2) (2009) 505--522.

\bibitem{seshadri1972quotient}
C.~S.~Seshadri, Quotient spaces modulo reductive algebraic groups, \emph{Ann. Math} \textbf{95}(3) (1972) 511--556.
\bibitem{Springer}  T.~A.~ Springer, \emph{Linear  algebraic  groups} (Springer Science \&  Business Media,
2009).
  \bibitem{strickland2000quotients}
E.~Strickland, Quotients of flag varieties by a maximal torus, \emph{Math. Z.} \textbf{234}(1) (2000) 1--7.

\end{thebibliography}
\end{document}